\newcommand{\K}{\mathcal{K}}
\newcommand{\A}{\mathbf{A}}
\newcommand{\Can}[1]{\mathbf{#1}^{\sigma}} 
\newcommand{\MS}{\mathcal{MS}} 
\DeclareMathOperator{\Fi}{Fi}
\DeclareMathOperator{\Id}{Id}
\begin{document}

\title{A topological duality for monotone expansions of semilattices
\thanks{This research was supported by ANPCyT under grant 2019-00882. The third author has been funded by the European Research Council (ERC) under the European Union’s Horizon 2020 research and innovation program (grant agreement No. 670624).}}


\author{Ismael Calomino \and Paula Mench\'{o}n \and William J. Zuluaga Botero}


\institute{Ismael Calomino 
\at CIC and Departamento de Matem\'{a}tica, Facultad de Ciencias Exactas, Universidad Nacional del Centro, Tandil, Argentina \\
\email{calomino@exa.unicen.edu.ar}
\and Paula Mench\'{o}n 
\at CONICET and Departamento de Matem\'{a}tica, Facultad de Ciencias Exactas, Universidad Nacional del Centro, Tandil, Argentina \\
\email{mpmenchon@exa.unicen.edu.ar}
\and William J. Zuluaga Botero 
\at Laboratoire J. A. Dieudonn\'{e}, Universit\'{e} C\^ote d'Azur, Nice, France and Departamento de Matem\'{a}tica, Facultad de Ciencias Exactas, Universidad Nacional del Centro, Tandil, Argentina \\ \email{wizubo@gmail.com}}
                   
\date{Received: date / Accepted: date}
                    
\maketitle
\begin{abstract}
In this paper we provide a Stone style duality for monotone semilattices by using the topological duality developed in \cite{Celani2020} for semilattices together with a topological description of their canonical extension. As an application of this duality we obtain a characterization of the congruences of monotone semilattices by means of monotone lower-Vietoris-type topologies.
\keywords{Semilattice \and Duality \and Monotone operator \and Canonical extension}
\PACS{ 06A12 \and 06E15}
\end{abstract}

\section{Introduction}

In \cite{Birkhoff} Birkhoff proved that every finite distributive lattice is represented by the lower sets of a finite partial order. In \cite{Stone2} Stone generalized the results given by Birkhoff establishing a categorical duality for bounded distributive lattices through spectral spaces with continuous functions. Later, in \cite{Priestley I} Priestley proved another categorical duality for bounded distributive lattices by means of certain ordered topological spaces and continuous order-preserving maps. Along the years both Stone duality and Priestley duality have served as powerful tools not only for the study of bounded distributive lattices, but also as an inspiration for the study of many ordered algebraic structures associated with non-classical logics. Evidence of this is supplied by the several topological representations obtained for diverse classes of algebras with distributive semilattice reducts and semilattice reducts. This is the case of the Stone style duality for distributive semilattices developed in \cite{CelaniTopological,CelaniCalomino}, which extends Stone duality and completes the representation given by Gr\"{a}tzer in \cite{Gr78}, the Priestley style duality developed by Bezhanishvili and Jansana in \cite{Guram-Jansana I,Guram-Jansana III} for the class of distributive semilattices and implicative semilattices, and more recently the Stone style duality for semilattices and lattices obtained by Celani and Gonz\'{a}lez in \cite{Celani2020}.

A particular case of interest raises when noticing that any of these dualities can be combined with some other tools coming from algebra. This is the case of canonical extensions (\cite{Gehrke -Jonsson2000,GJ2,PalmigianoDunn,GPsemilattices}). The studies on this subject started with J\'{o}nsson and Tarski's works for Boolean algebras with operators (\cite{Jonsson y Tarski,JT2}). The most remarkable contribution of these papers was that they provided a procedure for transfer the benefits and the working methodology from the duality for Boolean algebras to several classes of algebras with additional operators. The crucial observation is that given a class of algebras with lattice reduct $\bf{L}$ and a suitable topological duality for $\bf{L}$, it is possible to build new topological dualities for monotone expansions of $\bf{L}$ by using a topological description of the canonical extension of the members of $\bf{L}$ in terms of the given duality. This approach was used by Celani and Mench\'{o}n in \cite{menchon2} to obtain a Stone style duality for monotone distributive semilattices. In \cite{MoshierJipsen} Moshier and Jipsen employed a duality different from the ones of Stone and Priestley to give a topological description of the canonical extension of a (not necessarily distributive) bounded lattice.

The aim of this paper is to establish a Stone style duality for monotone semilattices. To do so, we provide a topological description of the canonical extension of a semilattice by using the duality obtained in \cite{Celani2020}. We conclude by applying our duality to characterize the congruences of semilattices and monotone semilattices in terms of lower-Vietoris-type topologies (\cite{Ivanova-dimova}). These results generalize the results obtained in \cite{CelaniCalomino} for the class of distributive semilattices.

The paper is organized as follows. In Section \ref{preli} we give several notions and results which are needed for the rest of the paper. In Section \ref{section: Ideals} we present a topological description of the canonical extension of a semilattice in terms of the Stone style duality given in \cite{Celani2020} and we explore the underlying connection of our results with the description of canonical extensions of semilattices given by Gouveia and Priestley in \cite{GPsemilattices}. Further, we extend our results to a topological description of canonical extensions of monotone maps of semilattices. In Section \ref{duality} we establish our main result: a topological duality for the category of monotone semilattices and homomorphisms of monotone semilattices. Finally, in Section \ref{homomorphic images} as an application of the duality, we prove that the congruences of a semilattice and a monotone semilattice are in correspondence with the lower-Vietoris-type families and monotone lower-Vietoris-type topologies of their respective dual spaces. Moreover, we use such a characterization to obtain a version for semilattices of the results of Fajtlowicz and Schmidt in \cite{Fajtlowicz} concerning the lattice of congruences.

\section{Preliminaries} \label{preli}

If $f \colon X \rightarrow Y$ is a function and $U \subseteq X$ we write $f[U]$ for the direct image of $U$ through $f$. If $V \subseteq Y$ we write $f^{-1}[V]$ for the inverse image of $V$ through $f$. If $g \colon Y \rightarrow Z$ is a function we write $gf$ for the composition of $g$ with $f$. If $R \subseteq X \times Y$ is a binary relation and $x\in X$, then $R(x)=\{y\in Y\colon (x,y)\in R\}$. If $T \subseteq Y \times Z$ is some other binary relation, then the composition of $T$ with $R$ is the set $T \circ R = \{ (x,z) \colon \exists y \in Y [(x,y) \in R \ \text{and} \ (y,z) \in T ]\}$.

Let $\langle X,\leq \rangle$ be a poset. For each $Y \subseteq X$, let $[Y) = \{x \in X \colon \exists y \in Y (y \leq x)\}$ and $(Y] = \{x \in X \colon \exists y \in Y (x\leq y)\}$. We will say that $Y$ is an \emph{upset} of $X$ (a \emph{downset} of $X$) if $Y=[Y)$ ($Y=(Y]$). We write $\mathcal{P}(X)$ for the set of all subsets of $X$ and $\mathrm{Up}(X)$ for the set of all upsets of $X$. The complement of $Y \subseteq X$ will be denoted by $Y^{c}$. A subset $Z \subseteq X$ is called \emph{directed} if for every $x,y \in Z$, there exists $z \in Z$ such that $x \leq z$ and $y \leq z$. Analogously, $Z \subseteq X$ is called \emph{dually directed} if for every $x,y \in Z$, there exists $z \in Z$ such that $z \leq x$ and $z \leq y$. Two posets $\langle X, \leq_{X} \rangle$ and $\langle Y, \leq_{Y} \rangle$ are said to be \emph{dually isomorphic} if there exists a onto function $f \colon X \rightarrow Y$ such that $x \leq_{X} y$ if and only if $f(y) \leq_{Y} f(x)$. If the context is clear, the subscripts of the partial orders shall be omitted. 

A \emph{meet-semilattice with greatest element}, or \emph{semilattice} for short, is an algebra $\mathbf{A} = \langle A, \wedge, 1 \rangle$ of type $(2,0)$ such that the operation $\wedge$ is idempotent, commutative, associative, and $a\wedge1=a$ for all $a\in A$. The partial order $\leq$ on $\A$ is given by $a \leq b$ if and only if $a = a \wedge b$. For each poset $\langle X, \leq \rangle$, the structure $\langle \mathrm{Up}(X), \cap, X \rangle$ is a semilattice. If $\A$ is a semilattice, a subset $F \subseteq A$ is a \emph{filter} of $\A$ if it is an upset, $1\in F$ and if $a,b\in F$, then $a \wedge b\in F$. The set of all filters of $\A$ will be denoted by $\Fi({\A})$. The filter generated by the subset $X \subseteq A$ will be denoted by $F(X)$. If $X = \{a\}$, then $F(\{a\}) = [\{a\})$, or simply, $[a)$. We say that a proper filter $F\in \Fi({\A})$ is \emph{irreducible} if for all $F_{1},F_{2} \in \Fi(\A)$, if $F = F_{1} \cap F_{2}$, then $F = F_{1}$ or $F = F_{2}$. We write $X(\A)$ the set of all irreducible filters of $\A$.

\begin{theorem}(\cite{CelaniTopological})
Let $\A$ be a semilattice. A proper filter $F$ is irreducible if and only if for every $a,b \notin F$, there exist $c \notin F$ and $f \in F$ such that $a \wedge f \leq c$ and $b \wedge f \leq c$.
\end{theorem}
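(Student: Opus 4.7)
The plan is to prove both directions by a direct filter-theoretic argument, using the explicit description of the filter generated by adjoining one element to an existing filter. Specifically, for a filter $F$ and $a \in A$, the filter generated by $F \cup \{a\}$ is $F_a = \{x \in A \colon \exists f \in F,\ f \wedge a \leq x\}$, which is the natural tool to connect the semilattice operation $\wedge$ to the lattice-theoretic notion of filter-join that underlies irreducibility.

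For the forward direction, I would assume $F$ is irreducible and take $a,b \notin F$. Setting $F_a$ and $F_b$ as above, both are filters strictly containing $F$ (since $a \in F_a \setminus F$ and $b \in F_b \setminus F$). The contrapositive of the irreducibility condition then yields $F \subsetneq F_a \cap F_b$, so there exists $c \in F_a \cap F_b$ with $c \notin F$. Unpacking the definitions, there exist $f_1, f_2 \in F$ with $f_1 \wedge a \leq c$ and $f_2 \wedge b \leq c$; taking $f := f_1 \wedge f_2 \in F$ gives $f \wedge a \leq c$ and $f \wedge b \leq c$ as required.

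For the reverse direction, I would assume the witness condition and show $F$ is irreducible. Suppose $F = F_1 \cap F_2$ with $F \neq F_1$ and $F \neq F_2$, aiming for a contradiction. Pick $a \in F_1 \setminus F$ and $b \in F_2 \setminus F$, and apply the hypothesis to get $c \notin F$ and $f \in F$ with $a \wedge f \leq c$ and $b \wedge f \leq c$. Since $a, f \in F_1$ we have $a \wedge f \in F_1$ and hence $c \in F_1$; symmetrically $c \in F_2$. But then $c \in F_1 \cap F_2 = F$, contradicting $c \notin F$.

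The only mildly delicate step is the forward direction, where one must be comfortable with the description of finitely generated filters in a meet-semilattice (no join is available), but this is routine once one observes that a meet of elements from $F \cup \{a\}$ collapses to the form $f \wedge a$ with $f \in F$. Neither direction requires any appeal to Zorn's lemma or separation, so no additional hypotheses on $\A$ beyond being a semilattice with $1$ are needed.
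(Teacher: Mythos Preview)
Your proof is correct in both directions. The paper does not actually supply its own proof of this theorem: it is stated with a citation to \cite{CelaniTopological} and used as a black box, so there is nothing in the present paper to compare your argument against. For what it is worth, the argument you give---via the explicit description $F_a = \{x : \exists f \in F,\ f \wedge a \leq x\}$ of the filter generated by $F \cup \{a\}$ and the observation that irreducibility forces $F \subsetneq F_a \cap F_b$---is precisely the standard proof one finds in the cited source, so your approach is the expected one.
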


A nonempty subset $I \subseteq A$ is an \emph{order-ideal} of $\A$ if it is a directed downset. We denote by $\Id(\A)$ the set of all order-ideals of $\A$.

\begin{theorem} \label{separacion} (\cite{CelaniTopological}) Let $\A$ be a semilattice. Let $F \in \Fi(\A)$ and let $I \in \Id(\A)$. If $F \cap I = \emptyset$, then there exists $P \in X(\A)$ such that $F \subseteq P$ and $P \cap I = \emptyset$.
\end{theorem}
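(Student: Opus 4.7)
The plan is a standard Zorn's lemma argument, refined by the irreducibility criterion from the previous theorem.

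First I would consider the family
\[
\mathcal{F} = \{ G \in \Fi(\A) \colon F \subseteq G \text{ and } G \cap I = \emptyset \}.
\]
This family is nonempty since $F \in \mathcal{F}$ by hypothesis, and it is closed under unions of chains (the union of an ascending chain of filters is a filter, and if each term misses $I$ then so does the union). By Zorn's lemma, $\mathcal{F}$ has a maximal element $P$. Clearly $F \subseteq P$ and $P \cap I = \emptyset$, so it remains only to show that $P$ is irreducible.

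To verify irreducibility I would apply the characterization from the previous theorem. Fix $a, b \notin P$. By maximality of $P$, each of the filters $F(P \cup \{a\})$ and $F(P \cup \{b\})$ strictly extends $P$, hence must meet $I$. Using that $P$ is already closed under meets, the filter generated by $P \cup \{a\}$ consists of the elements above some $p \wedge a$ with $p \in P$, and similarly for $b$. Therefore there exist $p_a, p_b \in P$ and $i_a, i_b \in I$ with
\[
p_a \wedge a \leq i_a \qquad \text{and} \qquad p_b \wedge b \leq i_b.
\]
Set $f := p_a \wedge p_b$, which belongs to $P$ since $P$ is a filter. Then $a \wedge f \leq i_a$ and $b \wedge f \leq i_b$.

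Now I would invoke the fact that $I$ is an order-ideal, hence directed: there exists $c \in I$ with $i_a \leq c$ and $i_b \leq c$. Consequently $a \wedge f \leq c$ and $b \wedge f \leq c$, while $c \notin P$ because $P \cap I = \emptyset$. This produces the witnesses $f \in P$ and $c \notin P$ required by the characterization, so $P$ is irreducible, i.e.\ $P \in X(\A)$. The only real subtlety is using directedness of $I$ at the very end to merge the two elements $i_a, i_b$ into a single $c$; this is the step where the hypothesis that $I$ is an order-ideal (rather than merely a downset) is essential, and I expect it to be the main point to double-check.
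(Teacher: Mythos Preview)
Your argument is correct. The paper does not actually prove this theorem; it is quoted from \cite{CelaniTopological} and stated without proof in the preliminaries, so there is no ``paper's own proof'' to compare against. That said, the Zorn's lemma argument you give is exactly the standard one used in the cited reference: maximize among filters extending $F$ and disjoint from $I$, then verify irreducibility via the characterization in the preceding theorem, using directedness of $I$ to merge the two witnesses. The only point you leave implicit is that $P$ is proper, which follows since $I$ is nonempty and $P \cap I = \emptyset$; otherwise the proof is complete as written.
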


Let $\A$ and $\mathbf{B}$ be two semilattices. A map $h \colon A \rightarrow B$ is a \emph{homomorphism} if $h(1)=1$ and $h(a \wedge b) = h(a) \wedge h(b)$ for all $a,b \in A$. If we consider the poset $\langle X(\A),\subseteq \rangle$ and the map $\beta_{\A} \colon A \rightarrow \mathrm{Up}(X(\A))$ given by $\beta_{\A}(a) = \{P \in X(\A) \colon a\in P\}$, then it is proved that $\A$ is isomorphic to the subalgebra $\beta_{\A}[A] =\{ \beta_{\A}(a) \colon a\in A\}$ of $\langle \mathrm{Up}(X({\A})), \cap, X(\A) \rangle$. Throughout the paper and to simplify notation, we will omit the subscript of $\beta_{\A}$ where appropriate.

By last, if $\langle X, \tau\rangle$ is a topological space and $Y\subseteq X$, we write $\mathrm{cl}(Y)$ for the topological closure of $Y$. In particular, if $Y=\{y\}$, then $\mathrm{cl}(\{y\})=\mathrm{cl}(y)$. It is known that every topological space can be endowed with a partial order $\sqsubseteq$ defined by $x\sqsubseteq y$ if and only if $x\in \mathrm{cl}(y)$. Such an order is called the \emph{specialization order} of $X$.

\subsection{S-spaces}

The authors in \cite{Celani2020} developed topological dualities for semilattices and lattices that are natural generalizations of the Stone duality for bounded distributive lattices through spectral spaces. To do so, they proposed the following definition.

\begin{definition}  
Let $\A$ be a semilattice and let $F \in \Fi(\A)$. A subset $I \subseteq A$ is an \emph{$F$-ideal} if it is a downset and for every $a, b \in I$, there exist $c \in I$ and $f \in F$ such that $a \wedge f \leq c$ and $b \wedge f \leq c$.
\end{definition}

It is clear that a proper filter $F$ is irreducible if and only if $F^c$ is an $F$-ideal. The following result generalizes Theorem \ref{separacion}.

\begin{theorem} (\cite{Celani2020}) 
Let $\A$ be a semilattice. Let $F \in \Fi(\A)$ and let $I$ be an $F$-ideal. If $F \cap I = \emptyset$, then there exists $P \in X(\A)$ such that $F \subseteq P$ and $P \cap I = \emptyset$.
\end{theorem}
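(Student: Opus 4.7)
The plan is to adapt the classical Zorn's lemma argument used for Theorem \ref{separacion} to this slightly more general setting, with the only delicate point being the verification that maximality implies irreducibility via the characterization of irreducible filters in a meet-semilattice.

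First, I would consider the family $\mathcal{G} = \{G \in \Fi(\A) \colon F \subseteq G \text{ and } G \cap I = \emptyset\}$, which is nonempty since $F \in \mathcal{G}$. A routine check shows that $\mathcal{G}$ is closed under unions of nonempty chains (the union of an upward directed family of filters is a filter, and disjointness from $I$ is preserved), so Zorn's lemma yields a maximal element $P \in \mathcal{G}$. By construction $F \subseteq P$ and $P \cap I = \emptyset$, and $P$ is proper because $I$ is nonempty and disjoint from $P$ (note that an $F$-ideal is nonempty by definition, or at worst one adjusts the argument for the trivial case). It remains to prove that $P$ is irreducible.

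To apply the characterization stated at the beginning of Section \ref{preli}, fix $a, b \notin P$ and look at the filters $F(P \cup \{a\})$ and $F(P \cup \{b\})$. Each of these strictly contains $P$, so by the maximality of $P$ in $\mathcal{G}$ they must both meet $I$. Using the standard description of the filter generated by a set in a meet-semilattice, this yields $p_{a}, p_{b} \in P$ and $i_{a}, i_{b} \in I$ such that $p_{a} \wedge a \leq i_{a}$ and $p_{b} \wedge b \leq i_{b}$. At this point I would invoke the $F$-ideal structure of $I$, observing that since $F \subseteq P$, any witness $f \in F$ appearing in the definition of an $F$-ideal also belongs to $P$; in other words, $I$ is automatically a $P$-ideal. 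Applying this to $i_{a}, i_{b} \in I$ produces $c \in I$ and $g \in P$ with $i_{a} \wedge g \leq c$ and $i_{b} \wedge g \leq c$.

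Finally, I would set $f := p_{a} \wedge p_{b} \wedge g$, which lies in $P$ because $P$ is a filter, and check that $f \wedge a \leq p_{a} \wedge a \wedge g \leq i_{a} \wedge g \leq c$, and symmetrically $f \wedge b \leq c$. Since $c \in I$ and $P \cap I = \emptyset$, we have $c \notin P$, so the characterization of irreducible filters gives that $P$ is irreducible, i.e. $P \in X(\A)$, completing the proof. The main obstacle I foresee is purely bookkeeping: keeping track of which meets land where so that the single element $f$ simultaneously witnesses the inequalities for both $a$ and $b$ while also forcing the right-hand side into $I$; the key conceptual ingredient is the silent upgrade of $I$ from an $F$-ideal to a $P$-ideal, which is what makes the argument go through beyond the order-ideal case.
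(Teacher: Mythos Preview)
Your argument is correct and is the natural Zorn's lemma proof; the one subtle point---upgrading $I$ from an $F$-ideal to a $P$-ideal via $F\subseteq P$ so that the witness $g$ lands in $P$---is exactly what is needed, and your final combination $f=p_a\wedge p_b\wedge g$ cleanly delivers the irreducibility criterion. Note, however, that the paper does not give its own proof of this theorem: it is quoted from \cite{Celani2020} without argument, so there is nothing in the present paper to compare your approach against. Your handling of the nonemptiness of $I$ is slightly informal (the definition of $F$-ideal in the paper does not explicitly require $I\neq\emptyset$), but since the statement is vacuous or degenerate when $I=\emptyset$ this is harmless.
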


We stress that along this paper, by \emph{topological space} we mean pairs $\langle X,\mathcal{K}\rangle$ where $\langle X,\tau\rangle$ is a topological space and $\mathcal{K}$ is a subbase for $\tau$. We consider the following family of subsets of $X$:
\begin{equation*}
S(X) = \{U^c \colon U \in  \mathcal{K}\}.
\end{equation*}
Let $C_\mathcal{K}(X)$ be the closure system on $X$ generated by $S(X)$, i.e., $C_\mathcal{K}(X) = \{\bigcap A \colon A \subseteq S(X)\}$. The elements of $C_\mathcal{K}(X)$ are called \textit{subbasic closed subsets} of $X$. Note that $S(X) \subseteq C_\mathcal{K}(X) \subseteq C(X)$, where $C(X)$ is the collection of all closed subsets of the topology. A subset $Y \subseteq X$ is called \emph{saturated} if it is an intersection of open sets.

\begin{definition} 
Let $\langle X, \mathcal{K}\rangle$ be a topological space. Let $Y \subseteq X$. A family $\mathcal{Z} \subseteq S(X)$ is a \emph{$Y$-family} if for every $A, B \in \mathcal{Z}$, there exist $H, C \in S(X)$ such that $Y \subseteq H$, $C \in \mathcal{Z}$, $A \cap H \subseteq C$ and $B \cap H \subseteq C$.
\end{definition}

\begin{definition} (\cite{Celani2020}) \label{S-spaces}
An \emph{$S$-space} is a topological space $\langle X, \mathcal{K} \rangle$ satisfying the following conditions:
\begin{enumerate} [label=(S{\arabic*})]
\item $\langle X, \mathcal{K} \rangle$ is a $T_0$-space and $X = \bigcup \mathcal{K}$.
\item $\mathcal{K}$ is a subbase of compact open subsets, it is closed under finite unions and $\emptyset\in \mathcal{K}$.
\item \label{S3} For every $U, V\in \mathcal{K}$, if $x \in U \cap V$, then there exist $W, D \in\mathcal{K}$ such that $x \notin W$, $x \in D$ and $D \subseteq (U \cap V) \cup W$.
\item \label{S4} If $Y \in C_\mathcal{K}(X)$ and $\mathcal{J} \subseteq S(X)$ is a $Y$-family such that $Y \cap A^c \neq \emptyset$, for all $A \in \mathcal{J}$, then $Y \cap \bigcap \{A^c \colon A \in \mathcal{J}\} \neq \emptyset$.
\end{enumerate}
\end{definition}

Let $\langle X, \K \rangle$ be an $S$-space. Then the structure $\mathbf{S}(X) = \langle S(X), \cap, X \rangle$ is a semilattice, called the \emph{dual semilattice of $\langle X,\mathcal{K}\rangle$}. Conversely, if $\A$ is a semilattice, then $\langle X(\A), \K_\A \rangle$ is a topological space where $\K_\A = \{\beta(a)^c \colon a \in A\}$ is a subbase. It follows that $\langle X(\A), \K_\A \rangle$ is an $S$-space, called the \emph{dual $S$-space of $\A$}, and $\beta \colon A \rightarrow S(X(\A))$ is an isomorphism between semilattices. If we consider the associated $S$-space $\langle X(\mathbf{S}(X)), \K_{\mathbf{S}(X)} \rangle$ of the dual semilattice $\mathbf{S}(X)$, then the mapping $H_X \colon X \rightarrow X(\mathbf{S}(X))$ given by $H_X(x) = \{A \in S(X) \colon x \in A\}$ for all $x \in X$ is a homeomorphism between $S$-spaces.

Now, we mention the relationship existing between the subbasic closed subsets and the filters of the dual semilattice of certain topological spaces proved in \cite{Celani2020}. 

\begin{proposition} \label{prop_subclos and filters}
Let $\langle X,\mathcal{K} \rangle$ be a topological space such that $\mathcal{K}$ is a subbase of compact open subsets, and it is closed under finite unions and $\emptyset \in \mathcal{K}$. Then:
\begin{enumerate}
\item For every $Y \in C_\mathcal{K}(X)$, $F_Y = \{A \in S(X) \colon Y \subseteq A\} \in \Fi(\mathbf{S}(X))$.
\item For every $F \in \Fi(\mathbf{S}(X))$, $Y_F = \bigcap F \in C_\mathcal{K}(X)$. 
\item $Y = Y_{F_Y}$ for all $Y \in C_\mathcal{K}(X)$ and $F = F_{Y_F}$ for all $F \in \Fi(\mathbf{S}(X))$.
\end{enumerate}
Moreover, the posets $\langle C_\mathcal{K}(X), \subseteq \rangle$ and $\langle \Fi(\mathbf{S}(X)), \subseteq \rangle$ are dually isomorphic.
\end{proposition}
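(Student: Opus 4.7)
The plan is to verify the three items sequentially and then deduce the dual isomorphism. For item (1), I would check the three filter axioms for $F_Y$. The top element $X = \emptyset^{c}$ lies in $S(X)$ (since $\emptyset\in\mathcal{K}$) and trivially contains $Y$, so $X\in F_Y$. Upward closure inside $S(X)$ is immediate. For closure under meets, given $A_{1},A_{2}\in F_{Y}$, write $A_{i}=U_{i}^{c}$ with $U_{i}\in\mathcal{K}$; since $\mathcal{K}$ is closed under finite unions, $U_{1}\cup U_{2}\in\mathcal{K}$, hence $A_{1}\cap A_{2}=(U_{1}\cup U_{2})^{c}\in S(X)$, and of course $Y\subseteq A_{1}\cap A_{2}$.

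Item (2) is essentially a definitional check: $Y_{F}=\bigcap F$ is an intersection of elements of $S(X)$, and by the definition of $C_{\mathcal{K}}(X)$ as the closure system on $X$ generated by $S(X)$, such an intersection belongs to $C_{\mathcal{K}}(X)$.

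For item (3), the inclusion $Y\subseteq Y_{F_{Y}}$ is tautological, and the reverse follows by writing $Y=\bigcap\mathcal{A}$ for some $\mathcal{A}\subseteq S(X)$ (possible since $Y\in C_{\mathcal{K}}(X)$); every $A\in\mathcal{A}$ then lies in $F_Y$, so $Y_{F_Y}\subseteq A$ for each such $A$ and therefore $Y_{F_Y}\subseteq Y$. The inclusion $F\subseteq F_{Y_F}$ is also immediate. The real content lies in the reverse inclusion $F_{Y_F}\subseteq F$, and here is where compactness must be used. Take $B\in F_{Y_F}$, so $\bigcap F\subseteq B$, equivalently $B^{c}\subseteq\bigcup\{A^{c}\colon A\in F\}$. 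Since $B^{c}\in\mathcal{K}$ is compact and each $A^{c}\in\mathcal{K}$ is open, we can extract a finite subcover $B^{c}\subseteq A_{1}^{c}\cup\cdots\cup A_{n}^{c}$ with $A_{1},\ldots,A_{n}\in F$. Hence $A_{1}\cap\cdots\cap A_{n}\subseteq B$; as $F$ is closed under finite meets (by item~(1), interpreting $F$ as a filter of $\mathbf{S}(X)$), we get $A_{1}\cap\cdots\cap A_{n}\in F$, and upward closure then forces $B\in F$.

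For the final statement, the assignments $Y\mapsto F_Y$ and $F\mapsto Y_F$ are clearly order-reversing (more containment in $Y$ means fewer $A\in S(X)$ above $Y$, and dually for $\bigcap F$), and by item~(3) they are mutually inverse; hence they implement a dual isomorphism between $\langle C_{\mathcal{K}}(X),\subseteq\rangle$ and $\langle \Fi(\mathbf{S}(X)),\subseteq\rangle$. The main obstacle is the single compactness step in $F_{Y_F}\subseteq F$; everything else is bookkeeping, but that step is precisely where both hypotheses on $\mathcal{K}$—closure under finite unions and compactness of its members—are genuinely used, and I expect it to be the only place where the proof has real content.
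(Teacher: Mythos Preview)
Your proof is correct. Note, however, that the paper does not supply its own proof of this proposition: it is quoted from \cite{Celani2020} and stated without argument, so there is no in-paper proof to compare against. Your argument is the standard one, and the identification of the compactness step in $F_{Y_F}\subseteq F$ as the only place with genuine content is accurate. One small remark: when you write ``by item~(1)'' to justify that $F$ is closed under finite meets, this is not really an appeal to item~(1) but simply to the hypothesis that $F$ is a filter of $\mathbf{S}(X)$; what item~(1) does establish (implicitly) is that $S(X)$ is closed under $\cap$, so that $\mathbf{S}(X)$ is indeed a semilattice and the phrase ``filter of $\mathbf{S}(X)$'' makes sense.
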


We conclude this part by making explicit Proposition \ref{prop_subclos and filters} in terms of semilattices and their dual S-spaces. Such a restatement will be extensively used along the rest of the paper.

Let $\A$ be a semilattice. Then the mapping $\varphi \colon \Fi(\A) \rightarrow C_{\mathcal{K}_{\mathbf{A}}}(X(\A))$ defined by
\begin{equation}\label{definition of fi}
\varphi(F) = \bigcap\{\beta(a) \colon a\in F\} = \{P \in X(\A) \colon F \subseteq P\}
\end{equation} 
is a dual isomorphism, whose inverse $\psi \colon C_{\mathcal{K}_{\A}}(X(\A)) \rightarrow \Fi(\A)$ is given by 
\begin{equation}\label{definition of psi}
\psi(Y) = \{a \in A \colon Y\subseteq \beta(a)\}.
\end{equation}

\subsection{Canonical extension}

Inspired in \cite{PalmigianoDunn}, we present the basic definitions of the theory of canonical extensions of semilattices. These definitions also coincide with the definition of canonical extension when restricted to bounded distributive lattices and Boolean algebras (\cite{Jonsson y Tarski,JT2,Gehrke -Jonsson2000,GJ2}).

\begin{definition}
Let $\mathbf{A}$ be a semilattice. A \emph{completion} of $\A$ is a pair $\mathbf{E} = \langle E, e \rangle$ where $E$ is a complete lattice and the map $e \colon A \rightarrow E$ is an embedding. 
\end{definition}

Let $\mathbf{A}$ be a semilattice and let $\mathbf{E}$ be a completion of $\A$. An element $x \in E$ is called \emph{closed} if there is $F \in \Fi(\A)$ such that $x = \bigwedge e(F)$. An element $x \in E$ is called \emph{open} if there is $I \in \Id(\A)$ such that $x = \bigvee e(F)$. Let us denote the collection of all closed elements of $E$ by $K(\mathbf{E})$ and the collection of all open elements of $E$ by $O(\mathbf{E})$. The completion $\mathbf{E}$ is \emph{dense} if $x = \bigwedge \{ y \in O(\mathbf{E}) \colon x \leq y \}$ and $x = \bigvee \{ y \in K(\mathbf{E}) \colon y \leq x \}$ for all $x \in E$. The completion $\mathbf{E}$ is \emph{compact} if for every non-empty dually directed subset $D$ of $A$ and every non-empty directed subset $U$ of $A$ such that $\bigwedge e(D) \leq \bigvee e(U)$, then there exist $x \in D$ and $y \in U$ such that $x\leq y$.

From now on we will suppress the embedding $e$ and we assume that $\A$ is a subalgebra of a completion $\mathbf{E}$ of $\A$.

\begin{definition}
Let $\A$ be a semilattice. A completion $\mathbf{E}$ of $\A$ is a \emph{canonical extension} if it is dense and compact.
\end{definition}

\begin{theorem}
Let $\A$ be a semilattice. Then $\A$ has a canonical extension and it is unique up to isomorphism.
\end{theorem}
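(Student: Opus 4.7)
To prove existence, I would construct $\mathbf{E}$ from the dual $S$-space $\langle X(\A), \mathcal{K}_\A\rangle$. Let
\[
E = \Bigl\{\bigcap_{\lambda \in \Lambda} \bigcup_{a \in I_\lambda} \beta(a) : \{I_\lambda\}_{\lambda \in \Lambda} \subseteq \Id(\A) \Bigr\},
\]
ordered by set inclusion, with embedding $e = \beta$. Every element of $E$ is an upset of $\langle X(\A), \subseteq\rangle$, and $E$ is closed under arbitrary set-theoretic intersections, so it is a complete lattice whose meets are intersections. Each $\beta(a)$ equals $\bigcup \beta((a])$ and therefore lies in $E$, and since $\beta(a \wedge b) = \beta(a) \cap \beta(b)$ and $\beta(1) = X(\A)$, the map $\beta$ embeds $\A$ into $\mathbf{E}$.

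Next, I would verify density and compactness. For $F \in \Fi(\A)$, we have $\bigwedge_E \beta(F) = \bigcap_{a \in F} \beta(a) = \varphi(F)$, so the closed elements of $\mathbf{E}$ are exactly the sets $\varphi(F)$. For $I \in \Id(\A)$, the union $\bigcup \beta(I)$ already lies in $E$, hence coincides with the join $\bigvee_E \beta(I)$, and the open elements are precisely these unions. Density from above is immediate from the definition of $E$. For density from below, given $x \in E$ and $P \in x$, the closed element $\varphi(P) = \{Q \in X(\A) : P \subseteq Q\}$ is contained in $x$ because $x$ is an upset of $X(\A)$; the join of these closed elements then recovers $x$. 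Compactness reduces to Theorem~\ref{separacion}: if $\bigwedge \beta(D) \leq \bigvee \beta(U)$ for a dually directed $D$ and a directed $U$, then every $P \in X(\A)$ containing $D$ must meet $U$; assuming $[D) \cap (U] = \emptyset$ would, by the separation theorem, produce a $P \in X(\A)$ contradicting this, so there must exist $d \in D$ and $u \in U$ with $d \leq u$.

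For uniqueness, I would adopt the standard argument: in any canonical extension $\mathbf{E}$ of $\A$, compactness ensures that $F \mapsto \bigwedge e(F)$ is an order-reversing bijection from $\Fi(\A)$ onto $K(\mathbf{E})$, and similarly $I \mapsto \bigvee e(I)$ is an order-preserving bijection from $\Id(\A)$ onto $O(\mathbf{E})$. Density then forces every element of $E$ to be uniquely determined by its relationship with these canonical copies of $\Fi(\A)$ and $\Id(\A)$, so any two canonical extensions are isomorphic via the unique map fixing $\A$. The step I anticipate to be most delicate is density from below; it ultimately rests on the fact that each point of $X(\A)$ is itself an irreducible filter which, via $\varphi$, yields a closed element lying below the given $x$. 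The representation $\A \cong \beta[A] \subseteq \mathrm{Up}(X(\A))$ from Section~\ref{preli} together with the separation theorem is precisely what makes this argument go through.
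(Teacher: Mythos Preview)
The paper does not prove this theorem where it is stated; it appears in the preliminaries as a background result drawn from the literature on canonical extensions (cf.\ \cite{PalmigianoDunn,GPsemilattices}). Existence is, however, established concretely later in Section~\ref{section: Ideals} via Theorem~\ref{canonical extension of semilattices}, and your construction coincides with that one: by Theorem~\ref{Dualidad Ideales} and Lemma~\ref{lema supremo}(3) the complements of subbasic saturated subsets are exactly the sets $\bigcup_{a\in I}\beta(a)$ for $I\in\Id(\A)$, so your $E$ is precisely the paper's $E(X(\A))$. Your verifications of density and compactness mirror the arguments given there (your compactness argument via Theorem~\ref{separacion} is the content of Remark~\ref{Compacidad Filtros Ideales}), and your uniqueness sketch is the standard one the paper implicitly relies on. In short, your proposal is correct and, for existence, follows essentially the same route the paper takes in Section~\ref{section: Ideals}; the only difference is packaging, since you phrase things directly in terms of order-ideals rather than first developing the subbasic saturated subsets $\mathcal{Z}(X(\A))$.
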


\section{Canonical extensions of semilattices} \label{section: Ideals}

The aim of this section is to exhibit a topological description of the canonical extension of a semilattice in terms of the duality obtained in \cite{Celani2020}. As a result we obtain that the canonical extension of a semilattice is determined by a closure system generated by the set of subasic saturated subsets of its dual space. Our motivations differ from those of \cite{GPsemilattices} because we are interested in producing a duality for monontone semilattices. Further details of the relation between our construction and the Gouveia and Priestley construction will be provided in Section \ref{Connection with the construction of Maidana y Lopez}.

\begin{definition} 
Let $\langle X,\mathcal{K} \rangle$ be an $S$-space. A subset $Z \subseteq X$ is a \emph{subbasic saturated subset} if there exists a dually directed family $\mathcal{L} \subseteq \mathcal{K}$ such that $Z = \bigcap \{ W \colon W \in \mathcal{L} \}$.
\end{definition}

We denote by $\mathcal{Z}(X)$ the set of all subbasic saturated subsets of an $S$-space $\langle X, \mathcal{K} \rangle$. Note that every subbasic saturated subset is a saturated set.

\begin{lemma} \label{lema Y-familia}
Let $\langle X, \K \rangle$ be an $S$-space. Then: 
\begin{enumerate}
\item If $\mathcal{L} \subseteq \K$ is a dually directed family, then $\mathcal{A}_{\mathcal{L}} = \{U^c \colon U \in \mathcal{L} \} \subseteq S(X)$ is a $Y$-family for all $Y \in C_\K(X)$.
\item If $\mathcal{A} \subseteq S(X)$ is a $Y$-family for all $Y\in C_\K(X)$, then $\mathcal{L}_{\mathcal{A}} = \{U^c \colon U \in \mathcal{A} \}\subseteq \K$ is a dually directed family.
\end{enumerate}
Moreover, the assignments $\mathcal{L} \mapsto \mathcal{A}_{\mathcal{L}}$ and $\mathcal{A}\mapsto \mathcal{L}_{\mathcal{A}}$ establish a bijection between the set of dually directed families of $\mathcal{K}$ and the families of $C_\K(X)$ which are a $Y$-family for all $Y \in C_\K(X)$.
\end{lemma}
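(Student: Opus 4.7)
The plan is to exploit the complement bijection $U \mapsto U^c$ between $\mathcal{K}$ and $S(X)$: once this is in place, the two conditions (dual directedness of a subfamily of $\mathcal{K}$, and being a $Y$-family for every $Y \in C_\K(X)$) become essentially mirror images of each other. The only structural input I need is that the whole space $X$ belongs both to $S(X)$ (because $\emptyset \in \mathcal{K}$ by axiom (S2), so $X = \emptyset^c \in S(X)$) and to $C_\K(X)$ (as the intersection of the empty subfamily of $S(X)$). With $X$ available in both collections, the auxiliary parameter $H$ appearing in the $Y$-family axiom will collapse to $X$ for the choice $Y = X$, leaving only the containment requirement on $C$.

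For part (1), I would fix a dually directed family $\mathcal{L} \subseteq \mathcal{K}$ and an arbitrary $Y \in C_\K(X)$, and take $A = U^c$, $B = V^c$ in $\mathcal{A}_\mathcal{L}$. By dual directedness of $\mathcal{L}$, I pick $W \in \mathcal{L}$ with $W \subseteq U$ and $W \subseteq V$, and set $C := W^c \in \mathcal{A}_\mathcal{L}$, so that $A \subseteq C$ and $B \subseteq C$. Choosing $H := X$, which lies in $S(X)$, the conditions $Y \subseteq H$, $A \cap H \subseteq C$ and $B \cap H \subseteq C$ hold trivially, witnessing that $\mathcal{A}_\mathcal{L}$ is a $Y$-family.

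For part (2), starting from a family $\mathcal{A} \subseteq S(X)$ that is a $Y$-family for every $Y \in C_\K(X)$, I would specialize to $Y = X \in C_\K(X)$. For any $A_1, A_2 \in \mathcal{A}$, the $X$-family condition supplies $H, C \in S(X)$ with $X \subseteq H$, $C \in \mathcal{A}$, and $A_i \cap H \subseteq C$. Since $H \subseteq X$ automatically, this forces $H = X$, and hence $A_1 \subseteq C$ and $A_2 \subseteq C$. Writing $U_i := A_i^c$ and $W := C^c$, taking complements yields $W \in \mathcal{L}_\mathcal{A}$ with $W \subseteq U_1$ and $W \subseteq U_2$, so $\mathcal{L}_\mathcal{A}$ is dually directed in $\mathcal{K}$.

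The \emph{moreover} clause is then formal: the identities $\mathcal{L}_{\mathcal{A}_\mathcal{L}} = \mathcal{L}$ and $\mathcal{A}_{\mathcal{L}_\mathcal{A}} = \mathcal{A}$ follow by double complementation, so the assignments $\mathcal{L} \mapsto \mathcal{A}_\mathcal{L}$ and $\mathcal{A} \mapsto \mathcal{L}_\mathcal{A}$ are mutually inverse. I do not foresee a genuine obstacle: the content of the lemma is exactly the observation that, for $Y = X$, the $Y$-family axiom reduces to the clause ``for every $A, B \in \mathcal{A}$ there exists $C \in \mathcal{A}$ with $A, B \subseteq C$,'' which is precisely the complementary reformulation of dual directedness of $\mathcal{L}_\mathcal{A}$.
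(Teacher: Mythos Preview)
Your proposal is correct and follows essentially the same argument as the paper: both proofs use $H = X \in S(X)$ as the witness in part (1), specialize to $Y = X \in C_\K(X)$ in part (2) (you are slightly more explicit in observing that $X \subseteq H$ forces $H = X$, which the paper elides), and dispose of the bijection claim via the double-complement identities.
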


\begin{proof}
1. Let $Y \in C_\K(X)$ and $U,V \in \mathcal{L}$. Since $\mathcal{L}$ is a dually directed family, there exists $W \in \mathcal{L}$ such that $W \subseteq U \cap V$. From $X \in S(X)$, we get $Y \subseteq X$, $U^c \cap X = U^c \subseteq W^c$ and $V^c \cap X = V^c \subseteq W^c$. So, $\mathcal{A}_{\mathcal{L}} = \{U^c \colon U \in \mathcal{L}\}$ is a $Y$-family.

2. Let $\mathcal{A} \subseteq S(X)$ be a $Y$-family for all $Y \in C_\K(X)$. Let $U,V \in \mathcal{A}$. As $X \in C_\K(X)$, there exists $W \in \mathcal{A}$ such that $U \cap X \subseteq W$ and $V \cap X \subseteq W$. Therefore, $\mathcal{L}_{\mathcal{A}} = \{U^c \colon U \in \mathcal{A} \} \subseteq \K$ is a dually directed family.

For the moreover part, observe that $\mathcal{L}_{\mathcal{A}_{\mathcal{L}}}=\mathcal{L}$ and $\mathcal{A}_{\mathcal{L}_{\mathcal{A}}}=\mathcal{A}$.
\end{proof}

\begin{remark} \label{todo saturado es Y familia}
If $Z = \bigcap \{W \colon W \in \mathcal{L} \} \in \mathcal{Z}(X)$, then the collection $\{W^c \colon W \in \mathcal{L} \}$ is a $Y$-family for all $Y \in C_\K(X)$.
\end{remark}

\begin{proposition}
Let $\langle X, \mathcal{K} \rangle$ be an $S$-space. The subbasic saturated subsets are compact saturated subsets of the topology. 
\end{proposition}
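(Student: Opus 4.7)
The plan is to establish saturation and compactness separately. Saturation is immediate: by (S2) each $W\in\mathcal{L}$ belongs to $\K$ and is open, so $Z=\bigcap\mathcal{L}$ is an intersection of open sets, hence saturated.

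For compactness I would proceed by contradiction, first reducing via Alexander's subbase theorem to covers drawn from $\K$ alone. Suppose then that $\{V_\alpha\}_\alpha\subseteq\K$ covers $Z$ and has no finite subcover. Passing to the dual semilattice $\mathbf{S}(X)$, I would consider the filter $F$ of $\mathbf{S}(X)$ generated by $\{V_\alpha^c\}_\alpha$ and the order-ideal $I$ generated by $\{W^c:W\in\mathcal{L}\}$. That $I$ is a directed downset is a direct translation of $\mathcal{L}$ being dually directed, and any order-ideal is an $F$-ideal (taking $f=1=X$ as the witness).

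The key computation is $F\cap I=\emptyset$. Indeed, any candidate common element $U\in S(X)$ would satisfy $V_{\alpha_1}^c\cap\cdots\cap V_{\alpha_n}^c\subseteq U\subseteq W^c$ for some $\alpha_1,\ldots,\alpha_n$ and some $W\in\mathcal{L}$, which rearranges to $Z\subseteq W\subseteq V_{\alpha_1}\cup\cdots\cup V_{\alpha_n}$, a finite subcover of $Z$ --- contradicting the standing hypothesis. With $F\cap I=\emptyset$ at hand, the filter--ideal separation theorem of \cite{Celani2020} supplies $P\in X(\mathbf{S}(X))$ with $F\subseteq P$ and $P\cap I=\emptyset$. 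Pushing $P$ back through the homeomorphism $H_X\colon X\to X(\mathbf{S}(X))$ to a point $x\in X$ with $P=H_X(x)$: the inclusion $F\subseteq P$ reads as $x\notin V_\alpha$ for every $\alpha$, and $P\cap I=\emptyset$ reads as $x\in W$ for every $W\in\mathcal{L}$, i.e., $x\in Z$. This yields $x\in Z\setminus\bigcup_\alpha V_\alpha$, contradicting that $\{V_\alpha\}_\alpha$ covers $Z$.

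The main obstacle is pinpointing the right pair $(F,I)$ in $\mathbf{S}(X)$ so that the clause ``no finite subcover'' becomes the disjointness $F\cap I=\emptyset$; once that translation is set up, the heavy lifting is carried out entirely by the filter--ideal separation theorem already recorded in the preliminaries, together with the homeomorphism $H_X$ between $X$ and $X(\mathbf{S}(X))$.
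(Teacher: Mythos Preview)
Your argument is correct but follows a different path from the paper's. The paper stays entirely on the topological side: writing $Y=\bigcap_\alpha V_\alpha^c\in C_\K(X)$, it uses the lemma that a dually directed $\mathcal L\subseteq\K$ gives rise to a $Y$-family $\{W^c:W\in\mathcal L\}$ for every subbasic closed $Y$, and then reads axiom~(S4) contrapositively to extract a single $W\in\mathcal L$ with $W\subseteq\bigcup_\alpha V_\alpha$. Since $W\in\K$ is compact by~(S2), a finite subcover of $W$---and hence of $Z\subseteq W$---follows at once.

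You instead pass to the dual semilattice $\mathbf S(X)$, encode the absence of a finite subcover as the disjointness of a filter and an order-ideal, separate them by an irreducible filter via Theorem~\ref{separacion}, and pull the latter back to a point of $X$ through the homeomorphism $H_X$. This is sound; note that the surjectivity of $H_X$ is itself established in \cite{Celani2020} using~(S4), so the same axiom is doing the work in both proofs, only packaged differently. The paper's route is shorter and never leaves $X$; yours makes explicit that compactness of subbasic saturated sets is the filter--ideal separation property of $\mathbf S(X)$ transported through the duality.
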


\begin{proof}
Let $Z \in \mathcal{Z}(X)$. We prove that $Z$ is compact. By the Alexander's subbasis Theorem, we only need to prove that for every cover $\{O_i \colon i \in I\}$ of $Z$ by elements of $\K$, there is a finite subcover. By assumption, there exists a dually directed family $\mathcal{L} \subseteq \mathcal{K}$ such that $Z = \bigcap \{W \colon W \in \mathcal{L}\}$. Since $Z \subseteq  \bigcup \{ O_i \colon i \in I \}$, we have  
\begin{equation*}
\bigcap \{W \colon W \in \mathcal{L} \} \cap \bigcap \{ O_i^c \colon i \in I \}=\emptyset.
\end{equation*}
By \ref{S4} and Lemma \ref{lema Y-familia} there exists $W \in \mathcal{L}$ such that $W \cap \bigcap \{ O_i^c \colon i \in I \} = \emptyset$. Then $W \subseteq \bigcup \{ O_i \colon i \in I \}$ and as $W$ is a compact subset, there exist $O_{i_1}, \dots, O_{i_n}$ such that $W \subseteq O_{i_1} \cup \ldots \cup O_{i_n}$. Hence, $Z \subseteq W \subseteq O_{i_1} \cup \ldots\cup O_{i_n}$ and $Z$ is compact.
\end{proof}

Let $\mathbf{A}$ be a semilattice and let $I\in$ $\mathrm{Id}({\mathbf{A}})$. We consider the following subset
\begin{equation*}
\alpha(I) = \bigcap \{\beta(a)^{c} \colon a \in I \} = \{ P \in X({\A}) \colon I \cap P = \emptyset\}.
\end{equation*}
Then $\alpha(I)$ is a subbasic saturated subset of the dual $S$-space $\langle X({\mathbf{A}}), \mathcal{K}_{\mathbf{A}} \rangle$.
Conversely, let $Z\subseteq X({\mathbf{A}})$ be a subbasic saturated subset and consider 
\begin{equation*}
I_{\mathbf{A}}(Z) = \{a \in A \colon \beta(a)\cap Z = \emptyset\}.
\end{equation*}
It is easy to see that $I_{\mathbf{A}}(Z)$ is a downset of $\mathbf{A}$.

\begin{proposition} 
Let $\langle X, \K \rangle$ be an $S$-space and let $Z \subseteq X$ be a saturated set such that $Z = \bigcap \{W \colon W \in \mathcal{J} \}$ for some family $\mathcal{J} \subseteq \K$. Then $Z$ is a subbasic saturated subset if and only if the family $\{ U \in \K \colon Z \subseteq U\}$ is dually directed.
\end{proposition}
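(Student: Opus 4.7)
\medskip

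The plan is to prove the two implications separately; the easy direction is the converse, and the substantive direction relies on axiom \ref{S4} and Lemma \ref{lema Y-familia}.

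First I would dispatch the implication $(\Leftarrow)$. Suppose $\mathcal{M} := \{U \in \K : Z \subseteq U\}$ is dually directed. Because $Z = \bigcap \mathcal{J}$ for some $\mathcal{J} \subseteq \K$, every $W \in \mathcal{J}$ satisfies $Z \subseteq W$, so $\mathcal{J} \subseteq \mathcal{M}$. Hence $\bigcap \mathcal{M} \subseteq \bigcap \mathcal{J} = Z$, and the reverse inclusion is immediate from the definition of $\mathcal{M}$. Thus $Z = \bigcap \mathcal{M}$ with $\mathcal{M}$ dually directed, which is exactly the definition of a subbasic saturated subset.

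For the nontrivial direction $(\Rightarrow)$, assume $Z = \bigcap \mathcal{L}$ for some dually directed $\mathcal{L} \subseteq \K$, and take $U_1, U_2 \in \mathcal{M}$. The goal is to produce $W \in \mathcal{M}$ with $W \subseteq U_1 \cap U_2$. The key trick is to apply \ref{S4} separately with $Y_i = U_i^c$, which belongs to $S(X) \subseteq C_\K(X)$, and with the family $\mathcal{A}_\mathcal{L} = \{L^c : L \in \mathcal{L}\} \subseteq S(X)$. By Lemma \ref{lema Y-familia}(1), $\mathcal{A}_\mathcal{L}$ is a $Y$-family for every $Y \in C_\K(X)$, so in particular for each $Y_i$. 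Since $Z \subseteq U_i$, we have
\[
Y_i \cap \bigcap \{A^c : A \in \mathcal{A}_\mathcal{L}\} = U_i^c \cap \bigcap \mathcal{L} = U_i^c \cap Z = \emptyset,
\]
so the contrapositive of \ref{S4} yields some $L_i \in \mathcal{L}$ with $U_i^c \cap L_i = \emptyset$, i.e., $L_i \subseteq U_i$.

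Finally, I would invoke the dual directedness of $\mathcal{L}$ itself to pick $W \in \mathcal{L}$ with $W \subseteq L_1 \cap L_2 \subseteq U_1 \cap U_2$; since $Z = \bigcap \mathcal{L} \subseteq W$, this $W$ belongs to $\mathcal{M}$ and witnesses dual directedness. I expect the main subtlety to be recognizing that one should not try to apply \ref{S4} with $U_1^c \cup U_2^c$ as a single witness (which generally lies outside $C_\K(X)$), but rather apply it twice in parallel and then merge the outputs using the dual directedness of $\mathcal{L}$; everything else is bookkeeping.
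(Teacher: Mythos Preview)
Your proof is correct and follows essentially the same route as the paper's: the converse is the same sandwich argument using $\mathcal{J} \subseteq \mathcal{M}$, and for the forward direction you apply \ref{S4} (via Lemma \ref{lema Y-familia}) separately to $U_1^c$ and $U_2^c$ to extract $L_1, L_2 \in \mathcal{L}$ below $U_1, U_2$, then use dual directedness of $\mathcal{L}$ to find a common lower bound. The only difference is cosmetic: you are more explicit about why $U_i^c \in C_{\K}(X)$ and about invoking Lemma \ref{lema Y-familia}, whereas the paper cites \ref{S4} directly.
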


\begin{proof}
If $Z$ is a subbasic saturated subset, then there exists a dually directed family $\mathcal{L} \subseteq \K$ such that $Z = \bigcap \{W \colon W \in \mathcal{L} \}$. Let $S,T \in \{U \in \K \colon Z \subseteq U\}$. Thus, $Z \cap S^c = \emptyset$ and $Z \cap T^c = \emptyset$. By \ref{S4}, there exist $S',T' \in \mathcal{L}$ such that $S' \cap S^c = \emptyset$ and $T' \cap T^c = \emptyset$. As $\mathcal{L}$ is dually directed, there is $W \in \mathcal{L}$ such that $W \subseteq S'$ and $W \subseteq T'$, i.e., $W \subseteq S$ and $W \subseteq T$. So, $\{U \in \K \colon Z \subseteq U\}$ is dually directed.

Conversely, suppose that the family $\{U \in \K \colon Z \subseteq U\}$ is dually directed. Since $Z = \bigcap \{W \colon W \in \mathcal{J} \}$ for some family $\mathcal{J} \subseteq \K$, then $Z \subseteq \bigcap \{U \in \K \colon Z \subseteq U\} \subseteq \bigcap \{W \colon W \in \mathcal{J} \}=Z$ and $Z$ is a subbasic saturated subset.
\end{proof}

In the following result we see that order-ideals are in one to one correspondence with the subbasic saturated subsets.

\begin{theorem} \label{Dualidad Ideales}
Let $\A$ be a semilattice. Then the posets $\langle \mathrm{Id}(\A), \subseteq \rangle$ and $\langle \mathcal{Z}(X(\A)), \subseteq \rangle$ are dually isomorphic.
\end{theorem}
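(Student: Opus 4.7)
The plan is to exhibit the two maps introduced immediately before the statement as mutually inverse order-reversing bijections: $\alpha \colon \mathrm{Id}(\A) \to \mathcal{Z}(X(\A))$ sending $I \mapsto \bigcap\{\beta(a)^c \colon a \in I\}$, and $I_{\mathbf{A}} \colon \mathcal{Z}(X(\A)) \to \mathrm{Id}(\A)$ sending $Z \mapsto \{a \in A \colon \beta(a) \cap Z = \emptyset\}$. That both maps reverse the inclusion is immediate from their definitions, so the real work lies in verifying well-definedness and mutual inversion.

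For $\alpha$, fix $I \in \mathrm{Id}(\A)$ and $a,b \in I$; directedness of $I$ yields $c \in I$ with $a,b \leq c$, so $\beta(c)^c \subseteq \beta(a)^c \cap \beta(b)^c$, proving that $\{\beta(a)^c \colon a \in I\}$ is a dually directed subfamily of $\mathcal{K}_{\mathbf{A}}$ and hence $\alpha(I) \in \mathcal{Z}(X(\A))$. For $I_{\mathbf{A}}$, fix $Z \in \mathcal{Z}(X(\A))$; that $I_{\mathbf{A}}(Z)$ is a downset is clear from the fact that $\beta$ is order-preserving, and for directedness I would invoke the preceding proposition, which guarantees that the family $\{U \in \mathcal{K}_{\mathbf{A}} \colon Z \subseteq U\}$ is dually directed. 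Since $\mathcal{K}_{\mathbf{A}} = \{\beta(a)^c \colon a \in A\}$ is closed under finite unions through the identity $\beta(a)^c \cup \beta(b)^c = \beta(a \wedge b)^c$, for any $a,b \in I_{\mathbf{A}}(Z)$ there exists $c \in A$ with $Z \subseteq \beta(c)^c \subseteq \beta(a)^c \cap \beta(b)^c$, so $c \in I_{\mathbf{A}}(Z)$ with $a,b \leq c$.

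For the identities: $\alpha(I_{\mathbf{A}}(Z)) = \bigcap\{\beta(a)^c \colon Z \subseteq \beta(a)^c\} = \bigcap\{U \in \mathcal{K}_{\mathbf{A}} \colon Z \subseteq U\} = Z$, where the final equality is exactly what the preceding proposition delivers. Conversely, $I \subseteq I_{\mathbf{A}}(\alpha(I))$ is immediate; for the reverse, pick $a \notin I$. Because $I$ is a downset, $[a) \cap I = \emptyset$, so Theorem \ref{separacion} yields $P \in X(\A)$ with $a \in P$ and $P \cap I = \emptyset$, i.e., $P \in \beta(a) \cap \alpha(I)$, whence $a \notin I_{\mathbf{A}}(\alpha(I))$.

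I expect the main obstacle to be the directedness of $I_{\mathbf{A}}(Z)$: it is the only step that truly forces one to bring together the characterization of subbasic saturated subsets via dually directed supersets from the preceding proposition with the specific closure of $\mathcal{K}_{\mathbf{A}}$ under finite unions in the dual $S$-space. Order-reversal, the trivial inclusion $I \subseteq I_{\mathbf{A}}(\alpha(I))$, and the identity $\alpha \circ I_{\mathbf{A}} = \mathrm{id}$ are either formal or a direct application of the separation theorem for filters and order-ideals.
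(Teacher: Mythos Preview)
Your proposal is correct and follows essentially the same route as the paper, using the same pair of maps $\alpha$ and $I_{\mathbf{A}}$ and the same two ingredients (the separation theorem and condition \ref{S4}, the latter accessed by you through the preceding proposition rather than directly). The only cosmetic differences are that the paper handles the inclusion $I_{\mathbf{A}}(\alpha(I)) \subseteq I$ via \ref{S4} where you use Theorem~\ref{separacion}, and the paper separately checks that $\alpha$ reflects order (again via separation), which as you observe becomes automatic once both maps are shown to be order-reversing and mutually inverse; your aside on closure of $\mathcal{K}_{\mathbf{A}}$ under finite unions is not actually needed for the directedness of $I_{\mathbf{A}}(Z)$, but it does no harm.
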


\begin{proof}
Let $Z \in \mathcal{Z}(X(\A))$. We see that $I_{\A}(Z)$
is an order-ideal of $\A$ and $Z = \alpha(I_{\A}(Z))$. Moreover, if $I \in {\rm{Id}}(\A)$, then we prove that $I = I_{\A}(\alpha(I))$.

It is clear that $I_{\A}(Z)$ is a downset of $\A$. Let $a,b \in I_{\A}(Z)$. So, $\beta(a) \cap Z = \emptyset$ and $\beta(b) \cap Z = \emptyset$. Since $Z = \bigcap \{ \beta(x)^{c} \colon \beta(x)^{c} \in \mathcal{L}\}$ for some dually directed family $\mathcal{L} \subseteq \mathcal{K}_{\A}$ and $\beta(a),\beta(b) \in C_\K(X(\A))$, then from Remark \ref{todo saturado es Y familia} $\{ \beta(x) \colon \beta(x)^{c} \in \mathcal{L}\}$ is a $\beta(a)$-family and a $\beta(b)$-family. Whence, by \ref{S4} there exist $\beta(c)^{c},\beta(d)^c \in \mathcal{L}$ such that $\beta(c)^{c} \cap \beta(a) = \emptyset$ and $\beta(d)^{c} \cap \beta(b) = \emptyset$. Thus, there is $\beta(e)^c \in \mathcal{L}$ such that $\beta(e)^c \subseteq \beta(c)^c, \beta(d)^c$. So $\beta(e)^{c} \cap \beta(a) = \emptyset$, $\beta(e)^{c} \cap \beta(b) = \emptyset$ and $Z \cap \beta(e) = \emptyset$, i.e., $a,b \leq e$ and $e \in I_{\A}(Z)$. Therefore, $I_{\mathbf{A}}(Z)$ is an order-ideal of $\A$ and 
\begin{equation*}
\alpha(I_{\A}(Z)) = \bigcap\{ \beta(a)^{c} \colon Z \subseteq \beta(a)^{c} \} \subseteq \bigcap \{\beta(x)^{c} \colon \beta(x)^{c} \in \mathcal{L} \} = Z.
\end{equation*}
The other inclusion is immediate.

Let $I \in {\rm{Id}}(\A)$. If $b \in I_{\A}(\alpha(I))$, then $\beta(b) \cap \alpha(I) = \beta(b) \cap \bigcap \{\beta(a)^{c} \colon a \in I \} = \emptyset$. Since $\beta(b) \in C_\K(X(\A))$ and the family $\{ \beta(a)^{c} \colon a \in I \}$ is dually directed, by Remark \ref{todo saturado es Y familia} and \ref{S4} we get that there is $c \in I$ such that $\beta(b) \cap \beta(c)^{c} = \emptyset$, i.e., $\beta(b) \subseteq \beta(c)$. So, $b \leq c$ and as $I$ is a downset, we have $b \in I$. The other inclusion is straightforward.

Then we have a mapping $\alpha \colon \mathrm{Id}(\A) \rightarrow \mathcal{Z}(X(\A))$ with inverse $I_{\A} \colon \mathcal{Z}(X(\A))\rightarrow\mathrm{Id}(\A)$. Let $I_{1}, I_{2} \in \mathrm{Id}(\A)$. If $I_{1}\subseteq I_{2}$ and $P \in \alpha(I_{2})$, then $P \cap I_{2} = \emptyset$. It follows
that $P \cap I_{1} = \emptyset$ and $P \in \alpha(I_{1})$. Reciprocally, assume that $\alpha(I_{1}) \subseteq \alpha(I_{2})$. Let $a \in I_{2}$ and suppose $a \notin I_{1}$. By Theorem \ref{separacion} there exists $P \in X(\A)$ such that $a \in P$ and $P \cap I_{1} = \emptyset$. So, $P \in \alpha(I_{1})$ and $P \notin \alpha(I_{2})$ which is a contradiction. Then $a \in I_{1}$ and $I_{2} \subseteq I_{1}$. Therefore, $\langle \mathrm{Id}(\A), \subseteq \rangle$ and $\langle \mathcal{Z}(X(\A)), \subseteq \rangle$ are dually isomorphic.
\end{proof}

\begin{lemma}\label{Compacidad}
Let $\A$ be a semilattice. If $Y \in C_{\K_{\A}}(X(\A))$ and $Z \in \mathcal{Z}(X(\A))$, then $\psi(Y) \cap I_{\A}(Z) \neq \emptyset$ if and only if $Y \cap Z= \emptyset$.
\end{lemma}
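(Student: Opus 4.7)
The forward direction is immediate from the definitions: if $a\in\psi(Y)\cap I_{\A}(Z)$, then $Y\subseteq\beta(a)$ and $\beta(a)\cap Z=\emptyset$, so $Y\cap Z\subseteq\beta(a)\cap Z=\emptyset$.

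For the converse, the plan is to extract an element of $\psi(Y)\cap I_{\A}(Z)$ by applying the contrapositive of axiom \ref{S4}. First, I would recall from the proof of Theorem \ref{Dualidad Ideales} that $Z=\alpha(I_{\A}(Z))=\bigcap\{\beta(a)^{c}\colon a\in I_{\A}(Z)\}$, and that this representation realizes $Z$ as the intersection of the dually directed family $\mathcal{L}=\{\beta(a)^{c}\colon a\in I_{\A}(Z)\}\subseteq\mathcal{K}_{\A}$ witnessing that $Z\in\mathcal{Z}(X(\A))$. By Lemma \ref{lema Y-familia}(1), the complementary family $\mathcal{A}_{\mathcal{L}}=\{\beta(a)\colon a\in I_{\A}(Z)\}\subseteq S(X(\A))$ is a $Y$-family for every $Y\in C_{\mathcal{K}_{\A}}(X(\A))$, in particular for the given $Y$.

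Now assuming $Y\cap Z=\emptyset$, one has
\begin{equation*}
Y\cap\bigcap\{A^{c}\colon A\in\mathcal{A}_{\mathcal{L}}\}=Y\cap\bigcap\{\beta(a)^{c}\colon a\in I_{\A}(Z)\}=Y\cap Z=\emptyset.
\end{equation*}
Applying the contrapositive of \ref{S4} to the $Y$-family $\mathcal{A}_{\mathcal{L}}$, there must exist some $A\in\mathcal{A}_{\mathcal{L}}$ with $Y\cap A^{c}=\emptyset$; that is, some $a\in I_{\A}(Z)$ with $Y\cap\beta(a)^{c}=\emptyset$, or equivalently $Y\subseteq\beta(a)$. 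By the definition \eqref{definition of psi} of $\psi$, this yields $a\in\psi(Y)$, and hence $a\in\psi(Y)\cap I_{\A}(Z)$, as required.

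The proof is almost entirely bookkeeping once the right translation is set up; the only subtle step is recognizing that the dually directed family witnessing $Z\in\mathcal{Z}(X(\A))$ can be chosen canonically as $\{\beta(a)^{c}\colon a\in I_{\A}(Z)\}$ (so that the associated $Y$-family is indexed by $I_{\A}(Z)$), which is exactly what allows \ref{S4} to deliver a witness inside $I_{\A}(Z)\cap\psi(Y)$ rather than merely in $S(X(\A))$.
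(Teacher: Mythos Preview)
Your proof is correct and follows essentially the same approach as the paper's: both reduce the nontrivial direction to an application of the contrapositive of \ref{S4} to a $Y$-family arising from a dually directed family witnessing $Z\in\mathcal{Z}(X(\A))$. The only cosmetic difference is that the paper works with an arbitrary such family $\mathcal{L}$ and then observes that any $\beta(c)^{c}\in\mathcal{L}$ satisfies $Z\subseteq\beta(c)^{c}$ (hence $c\in I_{\A}(Z)$), whereas you first invoke Theorem \ref{Dualidad Ideales} to take $\mathcal{L}=\{\beta(a)^{c}\colon a\in I_{\A}(Z)\}$ canonically, so that the witness lands in $I_{\A}(Z)$ by construction.
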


\begin{proof}
Suppose $Y \cap Z=\emptyset$. Since $Z = \bigcap \{\beta(a)^{c} \colon \beta(a)^{c} \in \mathcal{L}\}$ for some dually directed family $\mathcal{L} \subseteq \mathcal{K}_{\A}$, by Remark \ref{todo saturado es Y familia} and \ref{S4} there is $\beta(c)^{c} \in \mathcal{L}$ such that $Y \cap \beta(c)^{c} = \emptyset$. So, there exist $c \in A$ such that $c \in \psi(Y)$ and $c \in I_{\mathbf{A}}(Z)$. Hence, $\psi(Y) \cap I_{\mathbf{A}}(Z) \neq \emptyset$. The other implication is straightforward.
\end{proof}

\begin{remark}\label{Compacidad Filtros Ideales}
Observe that Lemma \ref{Compacidad} can be rephrased in terms of order-ideals and filters of $\mathbf{A}$ as follows: Let $\A$ be a semilattice. If $F \in \mathrm{Fi}(\A)$ and $I \in \mathrm{Id}(\A)$, then $\varphi(F) \cap \alpha(I) = \emptyset$ if and only if $F \cap I \neq \emptyset$.
\end{remark}

Let $\A$ be a semilattice and let us consider the following closure operator $\Lambda \colon \mathrm{Up}(X({\mathbf{A}})) \rightarrow \mathrm{Up}(X({\mathbf{A}}))$ given by 
\begin{equation}\label{operador lambda}
\Lambda(Y) = \bigcap \{ U^{c} \colon U \in \mathcal{Z}(X({\mathbf{A}})) \text{ and } Y \subseteq U^{c} \}.
\end{equation}

We call $E(X(\A))$ to the closure system on $\mathrm{Up}(X(\A))$ generated by $\mathcal{Z}(X({\mathbf{A}}))$: 
\begin{equation*}
E(X(\A)) = \left\{\bigcap \{U^{c} \colon U \in \mathcal{B}\} \colon \mathcal{B} \subseteq  \mathcal{Z}(X(\A)) \right\}.
\end{equation*} 

Notice that the structure $\langle E(X(\A)), \vee, \cap, \emptyset, X(\A) \rangle$ is a complete lattice, where the join of an arbitrary family $\{C_i \colon i \in I \}$ of $E(X(\A))$ is given by 
\begin{equation*}
\bigvee \{ C_i \colon i \in I \} = \bigcap \left\{ G \in E(X(\A)) \colon \bigcup \{ C_i \colon i \in I \} \subseteq G \right\} = \Lambda \left(\bigcup \{ C_i \colon i \in I \} \right).
\end{equation*}

\begin{lemma} \label{lema supremo} 
Let $\A$ be a semilattice. Then:
\begin{enumerate}
\item $\beta (a) \in E(X(\A))$ for all $a \in A$ and $C_\K(X(\A))\subseteq E(X(\A))$.

\item If $a,b \in A$ such that $a \vee b$ exists, then $\beta( a \vee b) = \beta(a) \vee \beta(b)$. 

\item Let $D\subseteq A$ be a directed subset. Then 
\begin{equation*}
 \bigvee \{\beta(a) \colon a \in D \} = \bigcup \{\beta(a) \colon a\in D \}.
\end{equation*}
In particular, if $U \in \mathcal{Z}(X(\A))$, we have 
\begin{equation*}
U^c = \bigcup\{\beta(a) \colon a \in I_\A(U)\} = \bigvee \{\beta(a) \colon a\in I_\A(U)\}.
\end{equation*} 
\end{enumerate}
\end{lemma}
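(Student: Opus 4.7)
My plan is to handle the three parts in order, leveraging the dual correspondence between order-ideals of $\A$ and subbasic saturated subsets of $X(\A)$ established in Theorem \ref{Dualidad Ideales}.

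For part (1), I would first observe that for each $a \in A$ the singleton family $\{\beta(a)^{c}\} \subseteq \K_\A$ is trivially dually directed, so $\beta(a)^{c} \in \mathcal{Z}(X(\A))$. Consequently $\beta(a) = (\beta(a)^{c})^{c} \in E(X(\A))$. The inclusion $C_{\K_\A}(X(\A)) \subseteq E(X(\A))$ then follows because every subbasic closed subset is an intersection of sets of the form $\beta(a)$, and $E(X(\A))$, being a closure system on $\mathrm{Up}(X(\A))$, is closed under arbitrary intersections.

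For part (2), monotonicity of $\beta$ combined with part (1) gives $\beta(a) \vee \beta(b) \subseteq \beta(a \vee b)$ immediately. For the reverse inclusion, I would pick an arbitrary $U \in \mathcal{Z}(X(\A))$ with $\beta(a) \cup \beta(b) \subseteq U^{c}$ and show $\beta(a \vee b) \subseteq U^{c}$. The hypothesis translates to $a, b \in I_\A(U)$; by Theorem \ref{Dualidad Ideales}, $I_\A(U)$ is an order-ideal, i.e.\ a directed downset, so it contains some common upper bound $c$ of $a, b$. Since $a \vee b \leq c$ and $I_\A(U)$ is downward closed, I conclude $a \vee b \in I_\A(U)$, which is exactly $\beta(a \vee b) \cap U = \emptyset$, as desired.

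For part (3), the containment $\bigcup \{\beta(a) : a \in D\} \subseteq \bigvee \{\beta(a) : a \in D\}$ is immediate from the definition of the join. For the reverse, I would note that if $D$ is directed, then its downset $I = (D]$ is again a directed downset, so $I \in \Id(\A)$. Setting $U = \alpha(I) \in \mathcal{Z}(X(\A))$, I obtain $U^{c} = \bigcup \{\beta(a) : a \in I\} = \bigcup \{\beta(a) : a \in D\}$ (since any $a \in I$ satisfies $a \leq d$ for some $d \in D$, so $\beta(a) \subseteq \beta(d)$). Thus the union itself already lies in $E(X(\A))$, which forces the join to coincide with it. The ``in particular'' statement follows by applying this to the directed order-ideal $I_\A(U)$ associated to $U \in \mathcal{Z}(X(\A))$ via Theorem \ref{Dualidad Ideales}, since $U = \alpha(I_\A(U))$.

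I do not anticipate a serious obstacle: every step is essentially a direct translation via the duality $\Id(\A) \leftrightarrow \mathcal{Z}(X(\A))$. The only delicate moment is in part (2), where mere directedness of $I_\A(U)$ only supplies a common upper bound $c$ of $a, b$ inside $I_\A(U)$; it is the combination of the existence of $a \vee b$ in $\A$ with the downset closure of $I_\A(U)$ that is needed to push $a \vee b$ itself into $I_\A(U)$.
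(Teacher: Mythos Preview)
Your proposal is correct and follows essentially the same route as the paper: part (1) via $\beta(a)^{c}\in\mathcal{Z}(X(\A))$, part (2) via $a,b\in I_{\A}(U)\Rightarrow a\vee b\in I_{\A}(U)$ using that $I_{\A}(U)$ is a directed downset, and part (3) via the order-ideal $(D]$ and its associated subbasic saturated subset $\alpha((D])$. The only cosmetic difference is that in part (3) the paper argues element-wise (taking $P$ in the join and showing $P\cap (D]\neq\emptyset$), whereas you observe directly that the union equals $\alpha((D])^{c}\in E(X(\A))$ and hence coincides with its closure $\Lambda$; these are the same argument.
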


\begin{proof}
1. It follows from the fact that $\beta(a)^c \in \mathcal{Z}(X(\A))$ for all $a \in A$.

2. Let $a,b \in A$ be such that $a \vee b$ exists. It is clear that $\beta(a) \vee \beta (b) \subseteq \beta (a \vee b)$. On the other hand, let $P \in \beta (a\vee b)$ and $U \in \mathcal{Z}(X(\A))$ be such that $\beta(a) \cup \beta(b) \subseteq U^c$. Then $\beta(a) \cap U = \emptyset$ and $\beta(b) \cap U = \emptyset$. So, $a,b \in I_\A(U)$ and $a \vee b \in I_\A(U)$. Thus, $P \cap I_\A(U) \neq \emptyset$ and we get $P \notin U$, i.e, $P\in U^c$. Therefore, $P \in \beta(a) \vee \beta(b)$.

3. Let $D \subseteq A$ be a directed subset. Then $\bigcup \{\beta(a) \colon a \in D\} \subseteq \bigvee\{\beta(a) \colon a \in D\}$. Let $P \in \bigvee \{\beta(a) \colon a \in D\}$ and consider the set $(D]$. Then we have $(D] \in \mathrm{Id}(\A)$. Since $\beta(a) \subseteq \alpha((D])^c$ for all $a \in D$, then $\bigcup \{\beta(a) \colon a\in D \} \subseteq \alpha((D])^c$. Thus, $P \notin \alpha((D])$ and $P \cap (D] \neq \emptyset$. It follows that there is $a \in D$ such that $a \in P$ which implies $P \in \bigcup \{\beta(a) \colon a \in D\}$.
\end{proof}

Now we are ready to prove the main result of this section.

\begin{theorem} \label{canonical extension of semilattices}
Let $\A$ be a semilattice. Then the pair $\langle E(X(\A)), \beta \rangle$ is a canonical extension of $\A$. 
\end{theorem}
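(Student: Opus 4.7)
To show $\langle E(X(\A)),\beta\rangle$ is a canonical extension, I would verify in turn that $\beta$ is a semilattice embedding into the complete lattice $E(X(\A))$, that the completion is dense, and that it is compact. The embedding step is essentially routine: by Lemma \ref{lema supremo}(1) each $\beta(a)$ belongs to $E(X(\A))$; the meet in $E(X(\A))$ is $\cap$, so $\beta$ preserves $\wedge$ and sends $1$ to the top $X(\A)$; and $\beta$ is injective because it realises the isomorphism $\A \cong \beta[A]$ recorded in the preliminaries. The real content lies in density and compactness, both of which hinge on the identification of the closed and open elements of $\mathbf{E} = \langle E(X(\A)),\vee,\cap,\emptyset,X(\A)\rangle$: for $F \in \Fi(\A)$ the element $\bigwedge\{\beta(a) \colon a \in F\}$ equals $\varphi(F)$, so $K(\mathbf{E}) = C_{\K_{\A}}(X(\A))$; and for $I \in \Id(\A)$, Lemma \ref{lema supremo}(3) gives $\bigvee\{\beta(a) \colon a \in I\} = \bigcup\{\beta(a) \colon a \in I\} = \alpha(I)^c$, so $O(\mathbf{E}) = \{\alpha(I)^c \colon I \in \Id(\A)\}$.

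For density, the ``meet of opens'' half is immediate from the very definition of $E(X(\A))$: any $C \in E(X(\A))$ has the form $C = \bigcap\{U^c \colon U \in \mathcal{B}\}$ for some $\mathcal{B}\subseteq \mathcal{Z}(X(\A))$, and each $U^c$ is an open element above $C$. For the ``join of closed'' half, the key claim is that whenever $P \in C$, the closed element $\varphi(P) = \{Q \in X(\A) \colon P \subseteq Q\}$ is contained in $C$. I would prove this by fixing $U \in \mathcal{B}$, applying Lemma \ref{lema supremo}(3) to write $U^c = \bigcup\{\beta(a) \colon a \in I_{\A}(U)\}$, and extracting from $P \in U^c$ some $a \in I_{\A}(U) \cap P$; then any $Q \supseteq P$ contains that $a$ and so lies in $\beta(a) \subseteq U^c$. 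Ranging over $\mathcal{B}$ yields $\varphi(P) \subseteq C$, hence $\bigcup\{\varphi(P) \colon P \in C\} = C$. Since $C$ is $\Lambda$-closed, the $\Lambda$-closure of this union—which is its $E(X(\A))$-join—equals $C$, so $C$ is the join of the closed elements below it.

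For compactness, let $D$ be a non-empty dually directed subset and $J$ a non-empty directed subset of $A$ with $\bigwedge \beta[D] \leq \bigvee \beta[J]$ in $E(X(\A))$. The identifications above translate this inequality to $\varphi([D)) \subseteq \alpha((J])^c$, equivalently $\varphi([D)) \cap \alpha((J]) = \emptyset$, where $[D) \in \Fi(\A)$ because $D$ is dually directed and $(J] \in \Id(\A)$ because $J$ is directed. Remark \ref{Compacidad Filtros Ideales} then converts this into $[D) \cap (J] \neq \emptyset$; any element $a$ of the intersection provides $d \in D$ and $j \in J$ with $d \leq a \leq j$, so $d \leq j$ as required. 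I expect the genuine work to be concentrated in the ``join of closed elements'' step of density: it forces one to trace the interplay between a subbasic saturated set $U$ and its order-ideal $I_{\A}(U)$, together with the upward closure of the condition ``$Q \cap I_{\A}(U) \neq \emptyset$''. Everything else is a clean translation through Theorem \ref{Dualidad Ideales} and Remark \ref{Compacidad Filtros Ideales}.
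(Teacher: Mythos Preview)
Your proposal is correct and follows essentially the same route as the paper: embedding via Lemma~\ref{lema supremo}(1), density by writing each $V\in E(X(\A))$ both as $\bigcap\{U^c\colon U\in\mathcal{B}\}$ and as $\bigvee\{\varphi(P)\colon P\in V\}$, and compactness via Remark~\ref{Compacidad Filtros Ideales}. The only notable difference is in the ``join of closed'' step: you establish $\varphi(P)\subseteq C$ by unpacking $C=\bigcap_{U\in\mathcal{B}}U^c$ and chasing an element of $I_{\A}(U)\cap P$, whereas the paper simply observes that every $V\in E(X(\A))$ is an upset of $\langle X(\A),\subseteq\rangle$ (each $U^c$ is an upset since $U\in\mathcal{Z}(X(\A))$ is a downset), so $\varphi(P)=[P)\subseteq V$ is immediate---your argument is correct but works harder than necessary.
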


\begin{proof}
From Lemma \ref{lema supremo}, $\beta(a) \in E(X(\A))$ for all $a \in A$ and $\beta$ is an embedding. 

In order to prove that $E(X(\A))$ is dense, let $V \in E(X(\A))$. Then there exists $\mathcal{B} \subseteq \mathcal{Z}(X(\A))$ such that $V = \bigcap \{ U^{c} \colon U \in \mathcal{B} \}$. From Theorem \ref{Dualidad Ideales}, it follows that 
\begin{equation*}
U^{c} = \left( \bigcap \{ \beta(a)^{c} \colon a \in I_{\A}(U) \} \right)^{c} = \bigcup \beta[I_{\A}(U)] = \bigvee \beta[I_{\A}(U)]
\end{equation*} 
and
\begin{equation*}
V = \bigcap \left\{ \bigvee \beta[I_{\A}(U)] \colon U \in \mathcal{B} \right\}.
\end{equation*}
On the other hand, since $V \in \mathrm{Up}(X(\A))$, it follows that $V = \bigcup \{ \varphi(P) \colon P \in V \}$. It is easy to check that
\begin{equation*}
V = \bigcup \{ \varphi(P) \colon P \in V \} = \bigvee \{ \varphi(P) \colon P \in V \} = \bigvee \left\{ \bigcap \{ \beta(P) \colon P \in V \} \right\}.
\end{equation*} 
 
Finally, we prove that $E(X(\A))$ is compact. Let $F \in \mathrm{Fi}(\A)$ and $I \in \mathrm{Id}(\A)$ be such that $\bigcap \beta[F] \subseteq \bigcup \beta[I]$. So, $\bigcap \beta[F] \cap \left( \bigcup \beta[I] \right)^{c} = \emptyset$ and by Theorem \ref{Compacidad} and Remark \ref{Compacidad Filtros Ideales}, we have $F \cap I \neq \emptyset$. Hence, there exists $a \in A$ such that $\beta(a) \in \beta[F]$ and $\beta(a) \in \beta[I]$. The result follows. 
\end{proof}

As usual, we write $\Can{A}$ to denote the canonical extension of a semilattice $\mathbf{A}$.

\begin{remark} \label{Abiertos Cerrados Extension canonica}
Let $\A$ be a semilattice and $\langle X(\A),\K_{\A} \rangle$ its dual $S$-space. As an immediate consequence of Theorem \ref{Dualidad Ideales} and Proposition 3.2 of \cite{Celani2020}, we have 
\begin{equation*}
K(\Can{A}) = C_{\mathcal{\mathcal{K}_{\mathbf{A}}}}(X(\mathbf{A})) {\hspace{0.3cm}} \text{and} {\hspace{0.3cm}} O(\Can{A})=\{Z^{c}\colon Z\in \mathcal{Z}(X(\mathbf{A}))\},
\end{equation*}
i.e., the closed elements of the canonical extension of $\A$ are exactly the subbasic closed subsets of $\langle X(\A),\K_{\A} \rangle$ and the open elements of the canonical extension of $\A$ are exactly the complements of the subbasic saturated subsets of $\langle X(\A),\K_{\A} \rangle$.
\end{remark}

\subsection{Connection with the construction of Gouveia and Priestley} \label{Connection with the construction of Maidana y Lopez}
 
Canonical extensions of semilattices have been studied in detail by Gouveia and Priestley in \cite{GPsemilattices}. This work is connected with the topological construction of canonical extensions made by Moshier and Jipsen \cite{MoshierJipsen}. Their approach places strong emphasis on the iterated filter completion and the canonical extension: they consider a semilattice $\A$ and the lattice of filters of filters of $\A$, denoted by $\Fi^2(\A)=\Fi(\Fi(\A))$. The family of sets $\Fi^2(\A)$ is an algebraic closure system and forms an algebraic lattice in which arbitrary meet and directed join are given, respectively, by intersection and directed union. The join of an arbitrary family of elements $\{F_i \colon i \in I \}$ is given by 
\begin{equation*}
\displaystyle \bigcap \left\{ \mathcal{G} \in \Fi^2(\A) \colon \bigcup \{F_i \colon i \in I \} \subseteq \mathcal{G} \right\}. 
\end{equation*}
In \cite{GPsemilattices}, the authors define an order embedding $e \colon A \rightarrow \Fi^2(\A)$ given by  $e(a) = \{ F  \in \Fi(\A) \colon a \in F\}$ for all $a \in A$ and the pair $\langle \Fi^2(\A), e \rangle$ is a completion of $\A$. Since $\Fi^2(\A)$ may not  be a canonical extension of $\A$, to construct a canonical extension the authors consider the subset of $\Fi^2(\A)$ consisting of those of its elements which are meets of directed joins from $e(A)$, i.e.,
\begin{equation*}
C=\left\{\bigcap \bigcup e(I_j) \colon \{I_j \colon j \in J \} \subseteq \Id(\A) \right\}.
\end{equation*}

Since the canonical extension is unique up to an isomorphism that fixes $\A$, there is an isomorphism between the canonical extension given in \cite{GPsemilattices} and the one we have developed in this work from the topological duality introduced in \cite{Celani2020}. We now proceed to the details. We know by Theorem \ref{canonical extension of semilattices} that $\langle E(X(\A)), \beta \rangle$ is a canonical extension for $\A$, where
\begin{equation*}
E(X(\A)) = \left\{ \bigcap \{U^{c} \colon U \in \mathcal{B}\} \colon \mathcal{B} \subseteq \mathcal{Z}(X(\mathbf{A})) \right\}
\end{equation*}
and $\beta \colon A \rightarrow E(X(\A))$ is given by $\beta (a) = \{P \in X(\A) \colon a\in P\}$. So, if we define the mapping $i \colon C \rightarrow E(X(\A))$ by
\begin{equation*}
i(X) = \{ P \in X(\A) \colon P \in X\} = X \cap X(\A)
\end{equation*}
then it is easy to prove that $i(X) = \bigcap \{ \alpha (I)^c \colon I \in \Id(\A) \text{ and } X \subseteq \bigcup e(I)\}$ and $i(e(a))=\beta(a)$. We get that $i$ is an isomorphism of lattices that fixes $\A$ with inverse $j \colon E(X(\A)) \rightarrow C$ given by
\begin{equation*}
j(Y) = \bigcap \left\{\bigcup e(I) \colon I \in \Id(\A) \text{ and } Y \subseteq \alpha(I)^c \right\}.
\end{equation*}
It is also worth noting that $i(\bigcup e(I)) = \alpha(I)^c$ for all $I \in \Id(\A)$ and $i(\bigcap e(F)) = \varphi (F) = Y_F$ for all $F \in \Fi(\A)$.

\subsection{Extension of monotone maps} \label{Extension of monotonic maps}

In this section we study canonical extensions of monotone maps of semilattices by using Theorem \ref{canonical extension of semilattices}. We start by recalling some results about canonical extensions of posets. For further details the reader may consult \cite{PalmigianoDunn}.

Let $P$ and $Q$ be two posets and $f \colon P \rightarrow Q$ be an order preserving map. Let $P^\sigma$ and $Q^\sigma$ be the canonical extensions of $P$ and $Q$, respectively. We define the maps $f^\sigma, f^\pi \colon P^\sigma \to Q^\sigma$ by 
\begin{align}
f^\sigma(u) & = \bigvee \left\{ \bigwedge \{ f(p) \colon x\leq p \in P\} \colon u \geq x \in K(P^\sigma) \right \}, \label{ext1} \\
f^\pi(u)& = \bigwedge \left \{\bigvee\{ f(p) \colon  y \geq p\in P\} \colon u\leq y \in O(P^\sigma) \right \}. \label{ext2}
\end{align}
For every order preserving map $f \colon P \to Q$, both $f^\sigma$ and $f^\pi$ are order preserving extensions of $f$. In addition, $f^\sigma \leq f^\pi$ with equality holding on both $K (P^\sigma)$ and $O(P^\sigma)$. For every $u \in P^\sigma$, $x \in K(P^\sigma)$ and $y \in O(P^\sigma)$ we have
\begin{align*}
f^\sigma(x) & = \bigwedge \{f(p) \colon x \leq p \in P\}, \\
f^\sigma(u) & = \bigvee \{f^\sigma (x) \colon u \geq x\in K(P^\sigma)\}, \\
f^\pi(y) & =  \bigvee \{f(p) \colon y \geq p\in P \}, \\
f^\pi (u) & = \bigwedge \{f^\pi(y) \colon u \leq y \in O(P^\sigma)\}.
\end{align*}
Also, $f^\sigma$ and $f^\pi$ map closed elements to closed elements and open elements to open elements. In general the extensions $f^\sigma$ and $f^\pi$ do not coincide. 

Since every semilattice is a poset, we can borrow the properties described before to this context. The following result is a straightforward consequence of Lemma 3.4 of \cite{PalmigianoDunn}, Theorem \ref{canonical extension of semilattices} and Remark \ref{Abiertos Cerrados Extension canonica}.

\begin{lemma} \label{Topological extensions of maps}
Let $\mathbf{A}$ and $\mathbf{B}$ be two semilattices and $f \colon A \rightarrow B$ be an order preserving map. Consider the mappings $f^{\sigma}, f^{\pi} \colon \Can{A} \rightarrow \Can{B}$ defined in (\ref{ext1}) and (\ref{ext2}). Then for every $a \in A$, $f^{\sigma}(\beta_{\mathbf{A}}(a)) = f^{\pi}(\beta_{\mathbf{A}}(a)) = \beta_{\mathbf{B}}(f(a))$. Moreover, $f^{\sigma} \leq f^{\pi}$ with the equality holding in $K(\Can{A}) \cup O(\Can{A})$. For every $Y \in C_{\mathcal{K}_{\mathbf{A}}}(X(\mathbf{A}))$, $Z \in \mathcal{Z}(X(\mathbf{A}))$ and $V \in E(X(\A))$ the following hold:
\begin{enumerate}
\item $\displaystyle f^\sigma(Y) = \bigcap \{ \beta_{\mathbf{B}}(f(a))\colon a \in A \text{ and } Y \subseteq \beta_{\mathbf{A}}(a) \}$,
\item $\displaystyle f^\sigma(V) = \bigvee \{ f^\sigma(Y) \colon V \supseteq Y \in C_{\mathcal{K}_{\mathbf{A}}}(X(\mathbf{A})) \}$, 
\item $\displaystyle f^\pi(Z^{c}) = \bigvee \{\beta_{\mathbf{B}}(f(a)) \colon a \in A \text{ and } \beta_{\mathbf{A}}(a) \subseteq Z^{c} \}$,
\item $\displaystyle f^\pi(V) = \bigcap \{ f^\pi(Z^{c}) \colon V^{c} \supseteq Z \in \mathcal{Z}(X(\mathbf{A})) \}$.
\end{enumerate}
\end{lemma}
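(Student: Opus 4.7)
The plan is to view this statement purely as a translation of the general definitions \eqref{ext1} and \eqref{ext2} through the concrete topological description of $\Can{A}$ and $\Can{B}$ supplied by Theorem \ref{canonical extension of semilattices} and Remark \ref{Abiertos Cerrados Extension canonica}. Under these identifications, $K(\Can{A}) = C_{\mathcal{K}_{\mathbf{A}}}(X(\mathbf{A}))$, $O(\Can{A}) = \{Z^{c} \colon Z \in \mathcal{Z}(X(\mathbf{A}))\}$, the order on $\Can{A}$ is set inclusion on $X(\mathbf{A})$, arbitrary meets are intersections, and arbitrary joins are given by $\Lambda \circ \bigcup$.

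First I would verify that for each $a \in A$ the element $\beta_{\mathbf{A}}(a)$ lies in $K(\Can{A}) \cap O(\Can{A})$. That $\beta_{\mathbf{A}}(a) \in K(\Can{A})$ is immediate from $\beta_{\mathbf{A}}(a) = \bigcap \beta_{\mathbf{A}}[[a))$ (which realises it as a subbasic closed set), while $\beta_{\mathbf{A}}(a) \in O(\Can{A})$ follows from Lemma \ref{lema supremo}(3) applied to the directed set $(a]$. Then the general formula for $f^{\sigma}$ on closed elements gives $f^{\sigma}(\beta_{\mathbf{A}}(a)) = \bigwedge\{f(p) \colon \beta_{\mathbf{A}}(a) \subseteq \beta_{\mathbf{A}}(p)\}$, and since $\beta_{\mathbf{A}}$ is an order embedding this meet equals $\beta_{\mathbf{B}}(f(a))$ (the index set contains $a$ itself and $f$ is order preserving). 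The dual computation for $f^{\pi}$ on the open element $\beta_{\mathbf{A}}(a)$ yields the same value, proving the first assertion. The inequality $f^{\sigma} \leq f^{\pi}$ with equality on $K(\Can{A}) \cup O(\Can{A})$ is imported directly from Lemma 3.4 of \cite{PalmigianoDunn}.

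For the four displayed formulas I would simply unwind each case. Formula (1): for $Y \in K(\Can{A})$, the closed-element formula $f^{\sigma}(Y) = \bigwedge\{f(p) \colon Y \leq p \in A\}$ translates, via the identification of meets with intersections and $Y \leq \beta_{\mathbf{A}}(a)$ with $Y \subseteq \beta_{\mathbf{A}}(a)$, into the stated intersection. Formula (2) is the general formula $f^{\sigma}(u) = \bigvee\{f^{\sigma}(x) \colon u \geq x \in K(P^{\sigma})\}$ rewritten with $K(\Can{A}) = C_{\mathcal{K}_{\mathbf{A}}}(X(\mathbf{A}))$. Formulas (3) and (4) are the order duals applied to $f^{\pi}$, using $O(\Can{A}) = \{Z^{c} \colon Z \in \mathcal{Z}(X(\mathbf{A}))\}$: for (3), the open-element formula gives $f^{\pi}(Z^{c}) = \bigvee\{f(p) \colon \beta_{\mathbf{A}}(p) \subseteq Z^{c}\}$; for (4), the general formula $f^{\pi}(u) = \bigwedge\{f^{\pi}(y) \colon u \leq y \in O(P^{\sigma})\}$ becomes the indicated intersection once one rewrites $V \leq Z^{c}$ as $V^{c} \supseteq Z$.

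There is no genuine obstacle here: the argument is entirely a dictionary-translation. The only points that deserve attention are being scrupulous about the fact that the order on $\Can{A}$ coming from the lattice structure on $E(X(\mathbf{A}))$ is exactly set inclusion, so that the abstract conditions ``$u \geq x$'' and ``$u \leq y$'' in \eqref{ext1} and \eqref{ext2} become the set-theoretic inclusions that appear in the statement, and verifying that the index sets (such as $\{a \in A \colon Y \subseteq \beta_{\mathbf{A}}(a)\}$) are non-empty when needed, which is guaranteed by $1 \in A$ and $\beta_{\mathbf{A}}(1) = X(\mathbf{A})$.
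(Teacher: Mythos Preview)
Your proposal is correct and follows exactly the approach the paper takes: the paper itself offers no detailed proof, stating only that the lemma ``is a straightforward consequence of Lemma 3.4 of \cite{PalmigianoDunn}, Theorem \ref{canonical extension of semilattices} and Remark \ref{Abiertos Cerrados Extension canonica}.'' Your sketch spells out precisely this dictionary translation, so you are in complete agreement with the authors' reasoning, only with more detail than they chose to include.
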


\begin{lemma} \label{lema auxiliar}
Let $\mathbf{A}$ and $\mathbf{B}$ be two semilattices and $f \colon A \to B$ be an order preserving map. Then for every $Z \in \mathcal{Z}(X(\mathbf{A}))$ the subset $\{ \beta_{\mathbf{B}}(f(a)) \colon a \in I_{\mathbf{A}}(Z)\}$ is directed.
\end{lemma}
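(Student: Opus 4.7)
The plan is to observe that the statement is essentially a transport of the directedness of $I_{\A}(Z)$ (as a subset of $\A$) along the order-preserving map $\beta_{\B} \circ f$. So the proof will be short and the only non-routine ingredient is the identification of $I_{\A}(Z)$ as an order-ideal.

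First I would invoke Theorem \ref{Dualidad Ideales}, which tells us that the assignment $Z \mapsto I_{\A}(Z)$ takes a subbasic saturated subset of $X(\A)$ to an order-ideal of $\A$. In particular, $I_{\A}(Z)$ is directed in $\langle A, \leq \rangle$. Hence, given any $a, b \in I_{\A}(Z)$, there exists $c \in I_{\A}(Z)$ such that $a \leq c$ and $b \leq c$.

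Next I would use that $f \colon A \to B$ is order preserving, to conclude $f(a) \leq f(c)$ and $f(b) \leq f(c)$ in $\B$. Since $\beta_{\B} \colon B \to \mathrm{Up}(X(\B))$ is the representation of $\B$ by irreducible filters, it is an order embedding, so $\beta_{\B}(f(a)) \subseteq \beta_{\B}(f(c))$ and $\beta_{\B}(f(b)) \subseteq \beta_{\B}(f(c))$, with $\beta_{\B}(f(c)) \in \{\beta_{\B}(f(x)) \colon x \in I_{\A}(Z)\}$. This shows that the family $\{\beta_{\B}(f(a)) \colon a \in I_{\A}(Z)\}$ is directed under $\subseteq$, which is the desired conclusion.

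There is really no obstacle here; the lemma is a straightforward bookkeeping result chaining together the dual correspondence between $\mathcal{Z}(X(\A))$ and $\mathrm{Id}(\A)$, the directedness of order-ideals, and the monotonicity of $f$ and $\beta_{\B}$. Its role is purely preparatory: it will be used later (presumably to show that $f^{\pi}$ can be computed as a directed join when evaluated at open elements of $\Can{A}$, ensuring that certain suprema in $E(X(\B))$ coincide with unions via Lemma \ref{lema supremo}(3)).
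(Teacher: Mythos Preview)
Your proof is correct and follows essentially the same approach as the paper: the paper's own proof is the single sentence ``It follows from the fact that $I_{\mathbf{A}}(Z)$ is directed and both $f$ and $\beta_{\mathbf{B}}$ are monotone,'' which is exactly the chain of reasoning you spelled out in detail.
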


\begin{proof}
It follows from the fact that $I_{\mathbf{A}}(Z)$ is directed and both $f$ and $\beta_{\mathbf{B}}$ are monotone.
\end{proof}

\begin{lemma} \label{previo relacion}
Let $\mathbf{A}$ and $\mathbf{B}$ be two semilattices and $f \colon A \rightarrow B$ be an order preserving map. Then for every $Z \in \mathcal{Z}(X(\mathbf{A}))$ we have 
\begin{equation*}
f^\pi(Z^{c}) = \{ P \in X(\mathbf{B}) \colon f^{-1}[P] \cap I_{\mathbf{A}}(Z) \neq \emptyset \}.
\end{equation*}
\end{lemma}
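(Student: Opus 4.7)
The plan is to derive this formula by plugging the description of $I_{\mathbf{A}}(Z)$ into Lemma~\ref{Topological extensions of maps}(3), converting the join in $E(X(\mathbf{B}))$ into a union via Lemma~\ref{lema supremo}(3), and then unwinding the union set-theoretically.

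First I would observe that $\beta_{\mathbf{A}}(a) \subseteq Z^c$ if and only if $\beta_{\mathbf{A}}(a) \cap Z = \emptyset$, which by the definition of $I_{\mathbf{A}}(Z)$ is equivalent to $a \in I_{\mathbf{A}}(Z)$. Substituting this into Lemma~\ref{Topological extensions of maps}(3) yields
\begin{equation*}
f^\pi(Z^c) = \bigvee \{ \beta_{\mathbf{B}}(f(a)) \colon a \in I_{\mathbf{A}}(Z) \},
\end{equation*}
where the join is taken in the canonical extension $E(X(\mathbf{B}))$ of $\mathbf{B}$.

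Next, I would use Lemma~\ref{lema auxiliar} (or more precisely, the directedness argument behind it): since $I_{\mathbf{A}}(Z)$ is directed and $f$ is order preserving, the image $f[I_{\mathbf{A}}(Z)] \subseteq B$ is a directed subset. Lemma~\ref{lema supremo}(3), applied to $\mathbf{B}$ and the directed set $f[I_{\mathbf{A}}(Z)]$, then gives
\begin{equation*}
\bigvee \{ \beta_{\mathbf{B}}(f(a)) \colon a \in I_{\mathbf{A}}(Z) \} = \bigcup \{ \beta_{\mathbf{B}}(f(a)) \colon a \in I_{\mathbf{A}}(Z) \}.
\end{equation*}

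Finally, I would chase through the definitions: a point $P \in X(\mathbf{B})$ lies in the above union iff there exists $a \in I_{\mathbf{A}}(Z)$ with $P \in \beta_{\mathbf{B}}(f(a))$, iff there exists $a \in I_{\mathbf{A}}(Z)$ with $f(a) \in P$, iff there exists $a \in I_{\mathbf{A}}(Z) \cap f^{-1}[P]$, which is exactly the condition $f^{-1}[P] \cap I_{\mathbf{A}}(Z) \neq \emptyset$. Combining the three equalities yields the claim. The only step requiring a bit of care is the passage from the abstract join to the union; this is where directedness (guaranteed by Lemma~\ref{lema auxiliar}) is essential, but no serious obstacle arises since Lemma~\ref{lema supremo}(3) is tailor-made for this situation.
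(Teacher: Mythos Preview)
Your argument is correct and follows essentially the same route the paper intends: the paper's proof is the one-line ``It follows from Lemma~\ref{Topological extensions of maps}'', and your write-up simply unpacks that line by invoking Lemma~\ref{lema auxiliar} for directedness and Lemma~\ref{lema supremo}(3) to convert the join into a union, which is precisely why those two lemmas were placed just before this one.
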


\begin{proof}
It follows from Lemma \ref{Topological extensions of maps}.
\end{proof}

Let $\mathbf{A}$ and $\mathbf{B}$ be two semilattices and $f \colon A \rightarrow B$ be an order preserving map. We define $R_{f} \subseteq X(\mathbf{B}) \times \mathcal{Z}(X(\mathbf{A}))$ by 
\begin{equation*}
(P,Z) \in R_{f} \Longleftrightarrow f^{-1}[P] \cap I_{\mathbf{A}}(Z) = \emptyset.
\end{equation*}

\begin{proposition} \label{Descripcion fpi}
Let $\mathbf{A}$ and $\mathbf{B}$ be two semilattices and $f \colon A \rightarrow B$ be an order preserving map. Then for every $V \in E(X(\mathbf{A}))$ we have 
\begin{equation*}
f^{\pi}(V) = \{ P\in X(\mathbf{B}) \colon \forall Z \in R_{f}(P)  [Z \cap V \neq \emptyset] \}.
\end{equation*}
\end{proposition}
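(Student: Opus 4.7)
The plan is to prove the proposition by directly unpacking the two ingredients already established, namely the formula for $f^{\pi}$ on an arbitrary element of $E(X(\A))$ given in Lemma \ref{Topological extensions of maps}(4) and the explicit description of $f^{\pi}(Z^c)$ for $Z \in \mathcal{Z}(X(\A))$ from Lemma \ref{previo relacion}. Once these are in place, the equality reduces to a purely logical rearrangement (a contrapositive), and no extra semilattice-theoretic content is required.

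More concretely, I would fix $V \in E(X(\A))$ and first apply Lemma \ref{Topological extensions of maps}(4) to obtain
\[
f^{\pi}(V) = \bigcap\bigl\{ f^{\pi}(Z^c) \colon Z \in \mathcal{Z}(X(\A)) \text{ and } Z \subseteq V^c \bigr\}.
\]
Therefore $P \in f^{\pi}(V)$ if and only if $P \in f^{\pi}(Z^c)$ for every $Z \in \mathcal{Z}(X(\A))$ with $Z \cap V = \emptyset$. By Lemma \ref{previo relacion}, the condition $P \in f^{\pi}(Z^c)$ is equivalent to $f^{-1}[P] \cap I_{\A}(Z) \neq \emptyset$, which by the definition of $R_f$ is equivalent to $(P,Z) \notin R_f$.

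Substituting this back, $P \in f^{\pi}(V)$ holds if and only if for every $Z \in \mathcal{Z}(X(\A))$, the implication $Z \cap V = \emptyset \Rightarrow (P,Z) \notin R_f$ is valid. Taking the contrapositive of this implication yields $(P,Z) \in R_f \Rightarrow Z \cap V \neq \emptyset$, which is precisely the statement that $Z \cap V \neq \emptyset$ for every $Z \in R_f(P)$. This gives the desired equality.

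Since the argument is essentially a chain of definitional equivalences, there is no real obstacle; the only point that requires a little care is making sure the quantifier over $Z \in \mathcal{Z}(X(\A))$ with $Z \subseteq V^c$ is transformed correctly into the quantifier over $Z \in R_f(P)$ via the contrapositive step. No additional hypothesis on $f$ (beyond being order preserving, which is already in force for Lemma \ref{previo relacion} and Lemma \ref{Topological extensions of maps}) is needed.
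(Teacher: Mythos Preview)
Your proof is correct and follows essentially the same approach as the paper: both rely on Lemma \ref{Topological extensions of maps}(4) together with Lemma \ref{previo relacion}, and the remaining step is the contrapositive you describe. The paper merely splits the equivalence into two inclusions argued by contradiction, which is a presentational rather than a substantive difference.
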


\begin{proof}
Let $V \in E(X(\A))$. In order to prove the first inclusion, let $P \in f^{\pi}(V)$ and $Z \in R_{f}(P)$. If $Z \cap V = \emptyset$, then $Z \subseteq V^{c}$. By Lemma \ref{Topological extensions of maps}, it follows that $P\in f^{\pi}(Z^{c})$. So, by Lemma \ref{previo relacion}, $f^{-1}[P] \cap I_{\A}(Z) \neq \emptyset$ which contradicts our assumption. Hence, $Z\cap V\neq\emptyset$. For the remaining inclusion, let $P \in  X(\mathbf{B})$ be such that for every $Z \in R_{f}(P)$ we have $Z \cap V \neq \emptyset$. Let $Z \in \mathcal{Z}(X(\A))$ be such that $Z \subseteq V^{c}$. We prove that $f^{-1}[P] \cap I_{\mathbf{A}}(Z) \neq \emptyset$. If we assume the contrary, then $Z \in R_{f}(P)$ and $Z \cap V \neq \emptyset$ which gives us a contradiction. Therefore, $P \in f^{\pi}(V)$.
\end{proof}

Finally, we analyze $f^\sigma$. Note that for every $P \in X(\mathbf{B})$ and every $Y \in C_{\mathcal{K}_{\mathbf{A}}}(X(\mathbf{A}))$ we have $P \in f^{\sigma}(Y)$ if and only if $\psi(Y) \subseteq f^{-1}[P]$ where $\psi(Y) = \{a \in A \colon Y \subseteq \beta_\A(a)\}$. So, we can define the relation $G_{f} \subseteq X(\mathbf{B}) \times C_{\mathcal{K}_{\mathbf{A}}}(X(\mathbf{A}))$ by 
\begin{equation*}
(P,Y) \in G_{f} \Longleftrightarrow \psi(Y) \subseteq f^{-1}[P].
\end{equation*}

\begin{proposition}\label{sigma estension}
Let $\mathbf{A}$ and $\mathbf{B}$ be two semilattices and $f \colon A \rightarrow B$ be an order preserving map and let $\Lambda$ be the closure operator defined in (\ref{operador lambda}). Then for every $V \in E(X(\mathbf{A}))$ we have 
\begin{equation*}
f^{\sigma}(V) = \Lambda \left( \bigcup \{ G_{f}^{-1}(Y) \colon V \supseteq Y \in C_{\mathcal{K}_{\mathbf{A}}}(X(\mathbf{A})) \} \right).
\end{equation*}
\end{proposition}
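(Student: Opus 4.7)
The plan is to chain the pointwise description of $f^\sigma$ on subbasic closed elements (from Lemma \ref{Topological extensions of maps}(1)) with the join formula (Lemma \ref{Topological extensions of maps}(2)), and then recognise the join in $E(X(\mathbf{B}))$ as the closure operator $\Lambda$ applied to the union. The only identification to make by hand is $f^\sigma(Y)=G_f^{-1}(Y)$ for subbasic closed $Y$; everything else is substitution.

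First I would record the key identity: for every $Y \in C_{\mathcal{K}_{\mathbf{A}}}(X(\mathbf{A}))$,
\begin{equation*}
f^\sigma(Y) = G_f^{-1}(Y).
\end{equation*}
Indeed, by Lemma \ref{Topological extensions of maps}(1), $P\in f^\sigma(Y)$ iff $f(a)\in P$ for every $a\in A$ with $Y\subseteq \beta_{\mathbf{A}}(a)$, iff $f(a)\in P$ for every $a\in\psi(Y)$, iff $\psi(Y)\subseteq f^{-1}[P]$, iff $(P,Y)\in G_f$. (This is the remark made by the authors immediately before the statement.)

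Next I would apply Lemma \ref{Topological extensions of maps}(2), which gives
\begin{equation*}
f^{\sigma}(V) = \bigvee \{ f^\sigma(Y) \colon V \supseteq Y \in C_{\mathcal{K}_{\mathbf{A}}}(X(\mathbf{A})) \},
\end{equation*}
where the supremum is taken in $E(X(\mathbf{B}))$. By the explicit formula for joins in the closure system $E(X(\mathbf{B}))$ (namely $\bigvee\{C_i\}=\Lambda(\bigcup C_i)$, applied here to $X(\mathbf{B})$), this becomes
\begin{equation*}
f^{\sigma}(V) = \Lambda\left(\bigcup\{ f^\sigma(Y)\colon V\supseteq Y\in C_{\mathcal{K}_{\mathbf{A}}}(X(\mathbf{A}))\}\right).
\end{equation*}
Substituting the identity from the first step yields exactly the claimed formula.

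There is no genuine obstacle here; the only point requiring care is bookkeeping of the two spaces involved: the family $\{f^\sigma(Y)\}$ is indexed by subbasic closed subsets of $X(\mathbf{A})$, while the closure operator $\Lambda$ and the join are being computed in $E(X(\mathbf{B}))$. Once that is kept straight, the proposition follows essentially by definition unwinding together with the two bullets of Lemma \ref{Topological extensions of maps}.
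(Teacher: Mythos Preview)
Your proof is correct and follows essentially the same approach as the paper: both use the identity $f^\sigma(Y)=G_f^{-1}(Y)$ for subbasic closed $Y$, then invoke Lemma \ref{Topological extensions of maps}(2) together with the description of joins in $E(X(\mathbf{B}))$ as $\Lambda$ applied to unions. Your write-up is in fact more explicit than the paper's, and your remark about keeping track of which space $\Lambda$ lives over is a useful clarification that the paper leaves implicit.
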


\begin{proof}
Let $V \in E(X(\mathbf{A}))$ and $P \in f^{\sigma}(P)$. Since $f^{\sigma}(Y) = G_{f}^{-1}(Y)$ for all $Y \in C_{\mathcal{K}_{\mathbf{A}}}(X(\mathbf{A}))$, we have 
\begin{equation*}
\bigcup \{ f^{\sigma}(Y) \colon V \supseteq Y \in C_{\mathcal{K}_{\mathbf{A}}}(X(\mathbf{A})) \} = \bigcup \{ G_{f}^{-1}(Y) \colon V \supseteq Y \in C_{\mathcal{K}_{\mathbf{A}}}(X(\mathbf{A})) \}
\end{equation*}
so the result follows.
\end{proof}

Unlike the case of monotone distributive semilattices, in where the topological descriptions of both extensions of a monotone map are quite easy to handle (see \cite{menchon2} for details), from Proposition \ref{sigma estension} it can be noticed that in practice, the topological description of the $\sigma$ extension of a monotone map $f$ between semilattices results more cumbersome to work with than its $\pi$ extension. Motivated by this fact, in the next section we will develop our duality by means of the topological description of $f^{\pi}$.

\section{Topological duality} \label{duality}

As we saw along Section \ref{section: Ideals}, the topological description of canonical extensions of semilattices can be used to obtain topological descriptions of the canonical extensions of monotone maps of semilattices, namely, the extensions $\pi$ and $\sigma$. In this section we take advantage of these results in order to define a suitable category of multirelational topological spaces which will serve as the categorical dual of the category of monotone semilattices. The construction of such multirelations strongly relies in a modeling process raising from the $f^{\pi}$ extension of a monotone function $f$ between semilattices.    

\begin{definition} 
Let $\A$ be a semilattice. A map $m \colon A \rightarrow A$ is said to be \emph{monotone} if it is an order preserving map, i.e, it satisfies the following condition: if $a \leq b$, then $ma \leq mb$ for all $a \in A$. Then a \emph{monotone semilattice} is a pair $\langle \A, m\rangle$ where $\A$ is a semilattice and $m$ is a monotone operation on $A$.
\end{definition}

\begin{remark} \label{Monotonic semilattices are a variety}
It is immediate that for every semilattice $\A$ a map $m \colon A \rightarrow A$ is monotone if and only if $m( a \wedge b) \leq ma \wedge mb$ for all $a,b \in A$. 
\end{remark}

It is clear from Remark \ref{Monotonic semilattices are a variety} that monotone semilattices form a variety. We write $\MS$ for the variety of monotone semilattices. Let $\langle \mathbf{A}, m \rangle$ and $\langle\mathbf{B}, n \rangle$ be two monotone semilattices. We say that a homomorphism $h \colon A\rightarrow B$ is a \emph{monotone homomorphism} if $h(ma)=nh(a)$ for all $a \in A$. We write $\mathsf{mMS}$ for the algebraic category of monotone semilattices and monotone homomorphisms.

A \emph{multirelation} on set $X$ is a subset of the cartesian product $X \times \mathcal{P}(X)$, that is, a set of ordered pairs $(x,Y)$ where $x\in X$ and $Y \subseteq X$ (\cite{Duntsch-Orlowska-RewitzkyMultirelations2010,Rewitzky2003}). Recall that in classical monotone modal logic a neighborhood frame is a pair $\langle X, R \rangle$ where $X$ is a set and $R \subseteq X \times \mathcal{P}(X)$, i.e., $R$ is a multirelation (\cite{Chellas,Hansen}).

We will represent a monotone operator $m$ on a semilattice $\A$ as a multirelation on the dual space of $\A$, where the canonical extension offers an advantageous point of view. To do so, we will apply the results we obtained in Section \ref{Extension of monotonic maps}.

From Lemmas \ref{Topological extensions of maps}, \ref{lema auxiliar} and Lemma \ref{lema supremo} we obtain a topological description of the monotone operators on a semilattice.

\begin{lemma} \label{Extensions topologically}
Let $\langle \mathbf{A}, m \rangle\in \MS$.	Then $\langle \Can{A}, m^{\pi} \rangle \in \MS$ and for every $a \in A$, $m^{\pi}(\beta(a)) = \beta(m(a))$. Moreover, for every $Z \in \mathcal{Z}(X(\mathbf{A}))$ and $V \in E(X(\A))$ the following hold:
\begin{enumerate}
\item $\displaystyle m^\pi(Z^{c}) = \bigcup \{\beta(ma) \colon Z \subseteq \beta(a)^{c} \}$,
\item $\displaystyle m^\pi(V) = \bigcap \{ m^\pi(Z^{c}) \colon V^{c} \supseteq Z \in \mathcal{Z}(X(\mathbf{A})) \}$.
\end{enumerate}
\end{lemma}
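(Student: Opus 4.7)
The plan is to derive all four assertions directly from the general machinery for $\pi$-extensions of order-preserving maps developed in Section \ref{Extension of monotonic maps}, specialized to $f = m$ with codomain equal to $\A$. Since $m \colon A \to A$ is order preserving, the general theory of $\pi$-extensions automatically makes $m^\pi \colon \Can{A} \to \Can{A}$ order preserving, so $\langle \Can{A}, m^\pi\rangle \in \MS$. The identity $m^\pi(\beta(a)) = \beta(m(a))$ for $a \in A$ is then immediate from the first statement of Lemma \ref{Topological extensions of maps}, and item (2) of the present lemma is a verbatim restatement of Lemma \ref{Topological extensions of maps}(4) in the case $\mathbf{B} = \mathbf{A}$.

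The only content that requires genuine argument is item (1). First I would invoke Lemma \ref{Topological extensions of maps}(3) to obtain
\[
m^\pi(Z^c) = \bigvee \{\beta(m(a)) \colon a \in A \text{ and } \beta(a) \subseteq Z^c\}.
\]
The condition $\beta(a) \subseteq Z^c$ is equivalent to $\beta(a) \cap Z = \emptyset$, i.e.\ $a \in I_\mathbf{A}(Z)$, which is the same as $Z \subseteq \beta(a)^c$. Hence the indexing family of the join is precisely $\{\beta(m(a)) \colon a \in I_\mathbf{A}(Z)\}$. To replace $\bigvee$ by $\bigcup$ I would appeal to Lemma \ref{lema supremo}(3), which requires the collection to be of the form $\beta[D]$ for a directed subset $D \subseteq A$. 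The natural candidate is $D := m[I_\mathbf{A}(Z)]$, and this set is directed: since $I_\mathbf{A}(Z)$ is an order-ideal (hence directed) and $m$ is monotone, for any $a,b \in I_\mathbf{A}(Z)$ one can pick $c \in I_\mathbf{A}(Z)$ with $a,b \leq c$, which gives $m(a), m(b) \leq m(c) \in D$. Applying Lemma \ref{lema supremo}(3) to this $D$ then yields (1).

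No step looks truly obstructive; the only subtlety worth flagging is that Lemma \ref{lema auxiliar} by itself only asserts directness of the family of sets $\{\beta(m(a)) \colon a \in I_\mathbf{A}(Z)\}$, whereas Lemma \ref{lema supremo}(3) is formulated for $\beta[D]$ with $D$ a directed subset of $A$ itself. One must therefore be careful to verify directness upstairs in $A$ for $D = m[I_\mathbf{A}(Z)]$, as above, rather than just downstairs in $E(X(\A))$. With this bookkeeping in place, the four claims fall out one after the other.
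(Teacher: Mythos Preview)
Your proposal is correct and follows exactly the approach the paper intends: the paper does not spell out a proof but simply states that the result follows ``From Lemmas \ref{Topological extensions of maps}, \ref{lema auxiliar} and Lemma \ref{lema supremo}'', and your write-up is a faithful and careful unpacking of precisely that derivation. Your observation that one should check directedness of $m[I_{\mathbf{A}}(Z)]$ in $A$ (rather than merely of $\{\beta(ma)\colon a\in I_{\mathbf{A}}(Z)\}$) is well taken and is implicit in the one-line proof of Lemma \ref{lema auxiliar}.
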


Let $\langle \mathbf{A}, m \rangle\in \MS$. We consider the multirelation $R_{m} \subseteq X(\mathbf{A}) \times \mathcal{Z}(X(\mathbf{A}))$ given by  \begin{equation*}
(P,Z) \in R_{m} \Longleftrightarrow m^{-1}[P] \cap I_{\mathbf{A}}(Z) = \emptyset.
\end{equation*}

As a particular case of Proposition \ref{Descripcion fpi} we obtain the following result.

\begin{proposition} \label{Extensions frames}
Let $\langle \mathbf{A}, m \rangle\in \MS$. Then for every $V \in E(X(\mathbf{A}))$ we have 
\begin{equation*}
m^{\pi}(V) = \{ P \in X(\mathbf{A}) \colon \forall Z \in R_{m}(P)  [Z \cap V \neq \emptyset] \}.
\end{equation*}
\end{proposition}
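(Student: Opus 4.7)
The plan is to deduce the statement as a direct instance of Proposition \ref{Descripcion fpi}. Since $\langle \mathbf{A}, m\rangle\in \MS$, by Remark \ref{Monotonic semilattices are a variety} the operator $m\colon A\to A$ is order preserving. Taking $\mathbf{B}=\mathbf{A}$ and $f=m$ in Proposition \ref{Descripcion fpi} yields an order preserving map from one semilattice to another, so the proposition applies.

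The second step is to verify that the relation $R_m$ defined right before the statement agrees with $R_f$ as defined before Proposition \ref{Descripcion fpi}. Both are subsets of $X(\mathbf{A})\times \mathcal{Z}(X(\mathbf{A}))$ given by the same condition $(P,Z)\in R_m \iff m^{-1}[P]\cap I_{\mathbf{A}}(Z)=\emptyset$, so this amounts to a syntactic comparison. With $R_m=R_f$, Proposition \ref{Descripcion fpi} immediately gives, for every $V\in E(X(\mathbf{A}))$,
\begin{equation*}
m^{\pi}(V) = \{ P\in X(\mathbf{A}) \colon \forall Z \in R_{m}(P)\, [Z \cap V \neq \emptyset] \},
\end{equation*}
which is precisely the desired identity.

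There is essentially no obstacle here; the content sits entirely inside Proposition \ref{Descripcion fpi}, and the only thing to check is that the setting of a monotone operator on a single semilattice fits the hypotheses of that more general proposition. Thus the proof is a one-line observation once the relevant objects are aligned.
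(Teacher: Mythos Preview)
Your proposal is correct and matches the paper's own approach exactly: the paper also presents this proposition as a particular case of Proposition~\ref{Descripcion fpi}, obtained by taking $\mathbf{B}=\mathbf{A}$ and $f=m$.
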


Let $\langle X, \mathcal{K} \rangle$ be an $S$-space. For each $U \in S(X)$ we define the subset $L_{U}$ of $\mathcal{Z}(X)$ as follows:
\begin{equation*}
L_{U} = \{ Z \in \mathcal{Z}(X) \colon Z \cap U \neq \emptyset \}. 
\end{equation*}

\begin{definition} \label{t esp mon} 
An \emph{$mS$-space} is a structure $\langle X,\mathcal{K},R\rangle$ where $\langle X, \mathcal{K}\rangle$ is an $S$-space and $R \subseteq X \times  \mathcal{Z}(X)$ a multirelation such that: 
\begin{enumerate}
\item $m_{R}(U) = \{ x\in X \colon \forall Z \in R(x) [ Z \cap U \neq \emptyset] \}\in S(X)$ for all $U \in S(X)$, 
\item $R(x) = \displaystyle \bigcap \{L_{U} \colon U \in S(X) \text{ and } x \in m_{R}(U) \}$ for all $x \in X$.
\end{enumerate}
\end{definition}

Notice that condition 1. of Definition \ref{t esp mon} can be restated as $$m_{R}(U) = \{ x \in X \colon R(x) \subseteq L_{U} \}\in S(X)$$ for all $U\in S(X)$. In addition, condition 2. says that for every $mS$-space $\langle X, \mathcal{K}, R \rangle$, the map $m_{R} \colon S(X) \rightarrow S(X)$ is well defined and it is monotone.

Let $\mathsf{Rel}$ be the category of sets and binary relations and $\mathsf{Set}$ be the category of sets and functions. It is well known that there exists a faithful functor $\mathcal{F} \colon \mathsf{Rel}^{op} \rightarrow \mathsf{Set}$ defined for every set $X$ by $\mathcal{F}(X) = \mathcal{P}(X)$, and for every binary relation $T \subseteq X \times Y$ and every $U \in \mathcal{P}(Y)$, $\mathcal{F}(T)$ is defined by
\begin{equation*}
\mathcal{F}(T)(U) = \{ x \in X \colon T(x) \subseteq U\}.
\end{equation*}

A \emph{meet-relation} between two $S$-spaces $\langle X_{1}, \mathcal{K}_{1} \rangle$ and $\langle X_{2}, \mathcal{K}_{2} \rangle$ was defined in \cite{Celani2020} as a relation $T \subseteq X_{1} \times X_{2}$ satisfying the following conditions:
\begin{enumerate}
\item $\mathcal{F}(T)(U) \in S(X_{1})$ for all $U \in S(X_{2})$, 
\item $T(x) = \displaystyle \bigcap \{ U \in S(X_{2}) \colon T(x) \subseteq U \}$ for all $x \in X_{1}$.
\end{enumerate}

Let $\langle X_{i},\mathcal{K}_{i}\rangle$, with $i=1,2,3$ be $S$-spaces and $R \subseteq X_{1} \times X_{2}$ and $T \subseteq X_{2} \times X_{3}$ be meet-relations. Consider now the following relation 
\begin{equation*}
T \ast R = \{ (x,z) \in X_{1} \times X_{3} \colon \forall U\in S(X_{3}) [(T\circ R)(x)\subseteq U \Rightarrow z\in U ]\}.
\end{equation*}
In \cite{CelaniCalomino,Celani2020} it was proved that $T \ast R$ is a meet-relation, $\ast$ is associative and if $\sqsupseteq_{2}$ is the dual of the specialization order of $X_{2}$, then $T \ast \sqsupseteq_{2} = T$ and $\sqsupseteq_{2} \ast R =R$. Observe that the latter can be rephrased by means that $S$-spaces and meet-relations form a category in which the composition is given by $\ast$ and the identity arrow is given by the dual of the specialization order. We write $\mathsf{mRel}$ for such a category. Let $\mathsf{MS}$ be the category of semilattices and homomorphisms. Let us assign to each $S$-space $\langle X, \mathcal{K} \rangle$ the semilattice $S(X)$ and to each meet-relation $T$ the map $\mathcal{F}(T)$. In Proposition 3.16 of \cite{Celani2020} it was proved that $\mathcal{F}(T \ast R) = \mathcal{F}(T\circ R)$ and it is no hard to see that $\mathcal{F}(\sqsupseteq)$ is the identity map of $S(X)$. Moreover, from the definition of meet-relation, $\mathcal{F}(T)$ and Proposition 3.19 of \cite{Celani2020}, it follows that these assignments  define a functor $\Box \colon \mathsf{mRel}^{op} \rightarrow \mathsf{MS}$ which is faithful by Lemma 3.21 of \cite{Celani2020}. We stress that this is an important operational advantage taking into account that $(T\ast R)(x) = \mathrm{cl}_{\mathcal{K}_{3}}((T\circ S)(x))$ for all $x \in X_{1}$. In the literature (\cite{CelaniCalomino,Celani2020,menchon2}) $\Box(T)$ is denoted by $\Box_{T}$. So for the sake of readability we will keep such a notation along this paper.

\begin{definition} \label{monotonic meet relations}
Let $\langle X_{1}, \mathcal{K}_{1}, R_{1} \rangle$ and $\langle X_{2}, \mathcal{K}_{2}, R_{2} \rangle$ be two $mS$-spaces. A meet-relation $T \subseteq X_{1} \times X_{2}$ is a \emph{monotone meet-relation} if the following diagram commutes:
\begin{displaymath}
\xymatrix{
S(X_{2}) \ar[r]^-{\Box_{T}} \ar[d]_-{m_{R_{2}}} & S(X_{1})\ar[d]^-{m_{R_{1}}}
\\
S(X_{2}) \ar[r]_-{\Box_{T}} & S(X_{1})
}
\end{displaymath}
\end{definition}

The following result is a topological characterization of monotone meet-relations. Its proof is essentially the same as that of Lemma 44 in \cite{menchon2} so we will omit it.

\begin{proposition} \label{meet-relations topolical}
Let $\langle X_{1}, \mathcal{K}_{1}, R_{1} \rangle$ and $\langle X_{2}, \mathcal{K}_{2}, R_{2}\rangle$ be two $mS$-spaces. A meet-relation $T \subseteq X_{1} \times X_{2}$ is monotone if and only if for every $x \in X_{1}$ and every $U \in S(X_{2})$ we have 
\begin{equation*}
U^{c} \in R_{2}[T(x)] \Longleftrightarrow T^{-1}[U^{c}] \in R_{1}(x) 
\end{equation*}
where $R_{2}[T(x)] = \{ Z \in \mathcal{Z}(X_{2}) \colon \exists y \in T(x) [(y,Z) \in R_{2}] \}$.
\end{proposition}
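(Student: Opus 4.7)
The plan is to reduce the commutativity of the square in Definition \ref{monotonic meet relations} pointwise to the stated biconditional, using two simple observations extracted from the axioms.

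The first (and crucial) observation is that condition 2 of Definition \ref{t esp mon} forces each $R_{i}(x)$ to be upward closed in $\mathcal{Z}(X_{i})$: if $Z \in R_{i}(x)$ and $Z \subseteq Z' \in \mathcal{Z}(X_{i})$, then for every $V \in S(X_{i})$ with $x \in m_{R_{i}}(V)$ one has $Z' \cap V \supseteq Z \cap V \neq \emptyset$, so $Z' \in L_{V}$ and hence $Z' \in R_{i}(x)$. The second observation is that for any meet-relation $T$, unfolding definitions gives $T^{-1}[U^{c}] = \Box_{T}(U)^{c}$; since $\Box_{T}(U) \in S(X_{1})$, its complement lies in $\mathcal{K}_{1}$ and thus, as the intersection of the dually directed singleton family $\{\Box_{T}(U)^{c}\}$, belongs to $\mathcal{Z}(X_{1})$.

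Combining these, I would first rephrase the inclusion $R_{i}(x) \subseteq L_{U}$ as the single membership test $U^{c} \notin R_{i}(x)$: one direction follows because $U^{c} \cap U = \emptyset$, and the converse uses the upward-closure of $R_{i}(x)$ applied to any $Z \in R_{i}(x)$ with $Z \subseteq U^{c}$. Then I would compute both sides of the square at an arbitrary $U \in S(X_{2})$. On the one hand, $x \in \Box_{T}(m_{R_{2}}(U))$ iff $T(x) \subseteq m_{R_{2}}(U)$ iff for every $y \in T(x)$, $R_{2}(y) \subseteq L_{U}$, i.e.\ $U^{c} \notin R_{2}(y)$; this is precisely $U^{c} \notin R_{2}[T(x)]$. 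On the other hand, $x \in m_{R_{1}}(\Box_{T}(U))$ iff $R_{1}(x) \subseteq L_{\Box_{T}(U)}$ iff $\Box_{T}(U)^{c} \notin R_{1}(x)$, i.e.\ $T^{-1}[U^{c}] \notin R_{1}(x)$. Therefore the square commutes if and only if, for every $x \in X_{1}$ and every $U \in S(X_{2})$,
\begin{equation*}
U^{c} \notin R_{2}[T(x)] \iff T^{-1}[U^{c}] \notin R_{1}(x),
\end{equation*}
which is the contrapositive of the claimed biconditional.

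The step I expect to be the main obstacle is the upward-closure property of $R_{i}(x)$ in $\mathcal{Z}(X_{i})$: without it, the inclusion $R_{i}(x) \subseteq L_{U}$ cannot be collapsed to the single membership assertion $U^{c} \notin R_{i}(x)$, and the biconditional would only capture a weaker "witness" form of commutativity rather than the full equality of the two composites $\Box_{T} \circ m_{R_{2}}$ and $m_{R_{1}} \circ \Box_{T}$. Once this property and the identification $T^{-1}[U^{c}] = \Box_{T}(U)^{c}$ are in place, the rest is a short chain of equivalences, which is why the authors refer to the proof of Lemma~44 in \cite{menchon2} and omit the details.
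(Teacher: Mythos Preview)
Your proposal is correct, and since the paper omits its own proof entirely (deferring to Lemma~44 of \cite{menchon2}), your reconstruction---extracting the upward-closure of $R_{i}(x)$ from condition~2 of Definition~\ref{t esp mon}, identifying $T^{-1}[U^{c}] = \Box_{T}(U)^{c}$, and then unwinding the commutativity of the square pointwise---is precisely the direct verification the authors have in mind. Your identification of the upward-closure step as the key lemma is apt: it is what collapses the quantified condition $R_{i}(x) \subseteq L_{U}$ to the single membership test $U^{c} \notin R_{i}(x)$ and makes the biconditional equivalent to full commutativity.
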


\begin{proposition} \label{composition mmeet-relations}
Let $\langle X_{1}, \mathcal{K}_{1}, R_{1} \rangle$, $\langle X_{2}, \mathcal{K}_{2}, R_{2} \rangle$ and $\langle X_{2}, \mathcal{K}_{3}, R_{3}\rangle$ be $mS$-spaces. Let $L \subseteq X_{1}\times X_{2}$ and $T \subseteq X_{2} \times X_{3}$ be monotone meet-relations. Then $T \ast L$ is a monotone meet-relation. Moreover, $\ast$ is associative.
\end{proposition}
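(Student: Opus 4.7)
The proof proposal is to exploit the functor $\Box\colon \mathsf{mRel}^{op}\to\mathsf{MS}$ and reduce everything to the commutativity of two pasted squares. First, because $L$ and $T$ are in particular meet-relations and meet-relations compose to meet-relations under $\ast$ (as established in \cite{Celani2020}), the composite $T \ast L \subseteq X_1 \times X_3$ is automatically a meet-relation. So the only genuine work is to verify the monotonicity square of Definition \ref{monotonic meet relations} for $T \ast L$.

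To do so, I would invoke the functoriality (contravariant) of $\Box$: since $\mathcal{F}(T \ast L) = \mathcal{F}(T \circ L)$ and $\Box$ is faithful and functorial on $\mathsf{mRel}^{op}$, we get $\Box_{T \ast L} = \Box_L \circ \Box_T$ as maps $S(X_3)\to S(X_2)\to S(X_1)$. The monotonicity of $L$ and $T$ gives the two commutative squares
\begin{equation*}
\Box_T \circ m_{R_3} = m_{R_2} \circ \Box_T,
\qquad
\Box_L \circ m_{R_2} = m_{R_1} \circ \Box_L.
\end{equation*}
Pasting them yields
\begin{equation*}
\Box_{T\ast L}\circ m_{R_3} \;=\; \Box_L\circ\Box_T\circ m_{R_3} \;=\; \Box_L\circ m_{R_2}\circ\Box_T \;=\; m_{R_1}\circ\Box_L\circ\Box_T \;=\; m_{R_1}\circ \Box_{T\ast L},
\end{equation*}
which is exactly the monotonicity diagram for $T\ast L$. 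Thus $T\ast L$ is a monotone meet-relation.

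For the \emph{moreover} clause, associativity of $\ast$ on monotone meet-relations is inherited directly from the associativity of $\ast$ on the larger class of meet-relations, already recorded in the preliminaries (coming from \cite{CelaniCalomino,Celani2020}); no new computation is needed, since the underlying set-theoretic relation $T \ast L$ does not depend on the monotonicity conditions.

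The only potential obstacle is making sure that the functoriality identity $\Box_{T\ast L} = \Box_L\circ \Box_T$ is available in the form just stated, but this is precisely Proposition 3.16 of \cite{Celani2020} combined with the fact that $\Box$ is a contravariant functor; alternatively one can verify $\Box_{T\ast L}(U) = \Box_L(\Box_T(U))$ for each $U\in S(X_3)$ directly from the definition of $\ast$ and the description of $\mathcal{F}$. Once this identification is in hand, the diagrammatic pasting argument above is routine.
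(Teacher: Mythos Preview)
Your proposal is correct and follows essentially the same approach as the paper: both arguments use the identity $\Box_{T\ast L}=\Box_{L}\Box_{T}$ (contravariant functoriality of $\Box$) and then paste the two commuting squares coming from the monotonicity of $L$ and $T$ to obtain $m_{R_{1}}\Box_{T\ast L}=\Box_{T\ast L}m_{R_{3}}$, with associativity inherited from the underlying associativity of $\ast$ on meet-relations.
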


\begin{proof}
Consider the diagram
\begin{displaymath}
\xymatrix{
S(X_{3}) \ar[r]^-{\Box_{T}} \ar[d]_-{m_{R_{3}}}  & S(X_{2})\ar[d]^-{m_{R_{2}}} \ar[r]^-{\Box_{L}} & S(X_{1}) \ar[d]^-{m_{R_{1}}}
\\
S(X_{3}) \ar[r]_-{\Box_{T}}  & S(X_{2}) \ar[r]_-{\Box_{L}} & S(X_{1})
}
\end{displaymath}
Since $\Box$ is contravariant and each of the squares is commutative by assumption, then from the following calculation
\begin{equation*}
m_{R_{1}} \Box_{T\ast L} = m_{R_{1}}(\Box_{L} \Box_{T}) = (\Box_{L} m_{R_{2}}) \Box_{T} = \Box_{L}(\Box_{T} m_{R_{3}}) = \Box_{T\ast L} m_{R_{3}}
\end{equation*}
we obtain that $T \ast L$ is a meet-relation. The proof of the associativity is analogue.
\end{proof}

\begin{proposition} \label{identity of mMS-spaces}
Let $\langle X, \mathcal{K}, R\rangle$ be an $mS$-space. The dual of the specialization order $\sqsupseteq \subseteq X \times X$ is a monotone meet-relation.
\end{proposition}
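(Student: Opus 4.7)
The plan is to observe that this proposition is essentially categorical bookkeeping, and that most of the work has already been done in the excerpt. Prior to Definition \ref{monotonic meet relations} it is recorded that $\sqsupseteq$ is a meet-relation and plays the role of the identity arrow in $\mathsf{mRel}$, encoded by the identities $T \ast \sqsupseteq = T$ and $\sqsupseteq \ast R = R$. Consequently, nothing has to be shown on the ``meet-relation'' side; only the monotonicity condition of Definition \ref{monotonic meet relations} remains, namely the commutativity of the square with $T = \sqsupseteq$ and $R_1 = R_2 = R$.

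For this, the cleanest route is functorial. Since $\Box \colon \mathsf{mRel}^{op} \to \mathsf{MS}$ is a functor and $\sqsupseteq$ is the identity of $X$ in $\mathsf{mRel}$, $\Box_{\sqsupseteq}$ must be the identity morphism of $S(X)$. Hence both composites $m_R \circ \Box_{\sqsupseteq}$ and $\Box_{\sqsupseteq} \circ m_R$ collapse to $m_R$, and the square commutes trivially.

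If one prefers a direct verification through Proposition \ref{meet-relations topolical}, the task is to show the equivalence $U^c \in R[\sqsupseteq(x)] \iff \sqsupseteq^{-1}[U^c] \in R(x)$ for every $x \in X$ and every $U \in S(X)$. I would do this by establishing two auxiliary equalities. First, $\sqsupseteq^{-1}[U^c] = U^c$: since $U \in S(X)$ is (sub-basic) closed, $U^c$ is open, hence an upset for $\sqsupseteq$ (if $y \sqsubseteq z$ and $y \in U^c$ then the open neighborhood $U^c$ of $y$ must contain $z$), so $\sqsupseteq^{-1}[U^c]\subseteq U^c$; the reverse inclusion follows from reflexivity of $\sqsupseteq$. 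Second, $R[\sqsupseteq(x)] = R(x)$: reflexivity again gives $R(x) \subseteq R[\sqsupseteq(x)]$, and for the converse one uses that $R$ is antitone along $\sqsubseteq$, which follows from condition 2 of Definition \ref{t esp mon} (expressing $R(x)$ as $\bigcap\{L_U : x \in m_R(U)\}$) together with the fact that each $m_R(U)$ is closed (condition 1 of Definition \ref{t esp mon}), so $y\sqsubseteq x$ forces $\{U : x\in m_R(U)\}\subseteq \{U : y\in m_R(U)\}$ and thus $R(y)\subseteq R(x)$. With both equalities in hand the desired biconditional degenerates to $U^c \in R(x) \iff U^c \in R(x)$.

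There is no genuine obstacle: the proposition is really a sanity check that $\sqsupseteq$ will serve as the identity in the category to be built in the following section. The only mildly substantive point, which only surfaces if one insists on the explicit verification, is the antitonicity of $R$ with respect to $\sqsubseteq$.
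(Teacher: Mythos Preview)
Your functorial argument is exactly the paper's proof: the paper simply notes that $\Box_{\sqsupseteq}$ is the identity on $S(X)$ by functoriality of $\Box$, so the square in Definition \ref{monotonic meet relations} commutes trivially. Your alternative direct verification via Proposition \ref{meet-relations topolical} is correct as well (minor slip: $U^c$ is an upset for $\sqsubseteq$, not $\sqsupseteq$, as your own parenthetical shows), but the paper does not bother with it.
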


\begin{proof}
Recall that from the functoriality of $\Box$ it follows that $\Box_{\sqsupseteq}$ is in fact the identity map of $S(X)$. So, it is clear that $\sqsupseteq$ satisfies Definition \ref{monotonic meet relations}. 
\end{proof}

By Propositions \ref{composition mmeet-relations} and \ref{identity of mMS-spaces} we get that $mS$-spaces and monotone meet-relations form a category in which the identity map is given by the dual of the specialization order. We write $\mathsf{mSp}$ for such a category. 

\begin{lemma} \label{homeomorphisms induce isomorphisms}
Let $\langle X_{1}, \mathcal{K}_{1}, R_{1} \rangle$ and $\langle X_{2}, \mathcal{K}_{2}, R_{2} \rangle$ be two $mS$-spaces and $f \colon X_{1}\rightarrow X_{2}$ be a bijective map such that
\begin{enumerate}
\item $\mathcal{K}_{2} = \{ f[V] \colon V \in \mathcal{K}_{1} \}$,
\item $(x,Z) \in R_{1}$ if and only if $(f(x),f[Z]) \in R_{2}$ for all $x \in X_{1}$ and all $Z \in \mathcal{Z}(X_{1})$.
\end{enumerate}
Then $f$ induces an isomorphism of $mS$-spaces.
\end{lemma}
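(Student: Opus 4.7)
The plan is to construct from $f$ a pair of mutually inverse monotone meet-relations in $\mathsf{mSp}$. First I will observe that bijectivity together with condition 1 makes $f$ a topological homeomorphism: one has $U \in S(X_2)$ iff $f^{-1}[U] \in S(X_1)$, and therefore $f[\mathrm{cl}(x)] = \mathrm{cl}(f(x))$ for every $x \in X_1$. I will also recall the $S$-space fact (which is exactly what makes the dual of the specialization order serve as the identity in $\mathsf{mRel}$) that $\mathrm{cl}(x) = \bigcap \{U \in S(X) \colon x \in U\}$; this follows from the Alexander-type argument applied to the subbase $\mathcal{K}$.

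Next I will define the candidate isomorphism $T_f \subseteq X_1 \times X_2$ by $(x,y) \in T_f$ iff $y \in \mathrm{cl}(f(x))$, together with its would-be inverse $T_{f^{-1}} \subseteq X_2 \times X_1$ defined symmetrically. To verify that $T_f$ is a meet-relation, I compute $\Box_{T_f}(U) = \{x \in X_1 \colon T_f(x) \subseteq U\} = \{x \in X_1 \colon f(x) \in U\} = f^{-1}[U]$ for every $U \in S(X_2)$, which belongs to $S(X_1)$ by the first paragraph; the second axiom of a meet-relation then reduces to $T_f(x) = \mathrm{cl}(f(x)) = \bigcap \{U \in S(X_2) \colon f(x) \in U\}$, which was just recalled. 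The same reasoning handles $T_{f^{-1}}$.

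Monotonicity of $T_f$ is the crux of the argument and the step I expect to require the most care. I will verify the commutative square of Definition \ref{monotonic meet relations} directly: for $U \in S(X_2)$ I must show $f^{-1}[m_{R_2}(U)] = m_{R_1}(f^{-1}[U])$. A point $x$ lies in the left-hand side iff every $Z \in R_2(f(x))$ meets $U$, and in the right-hand side iff every $Z' \in R_1(x)$ meets $f^{-1}[U]$. By condition 2 of the hypothesis, the assignment $Z' \mapsto f[Z']$ is a bijection $R_1(x) \to R_2(f(x))$, with inverse $W \mapsto f^{-1}[W]$ (well-defined into $\mathcal{Z}(X_1)$ because condition 1 allows one to pull back dually directed families in $\mathcal{K}_2$ to dually directed families in $\mathcal{K}_1$); bijectivity of $f$ further gives $Z' \cap f^{-1}[U] \neq \emptyset$ iff $f[Z'] \cap U \neq \emptyset$. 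Combining these two correspondences yields the desired equivalence. Monotonicity of $T_{f^{-1}}$ then follows symmetrically.

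Finally I will assemble the isomorphism. Since the functor $\Box$ is faithful and contravariant, it suffices to check $\Box_{T_f} \circ \Box_{T_{f^{-1}}} = \mathrm{id}_{S(X_1)}$ and $\Box_{T_{f^{-1}}} \circ \Box_{T_f} = \mathrm{id}_{S(X_2)}$; these are immediate from $\Box_{T_f}(U) = f^{-1}[U]$, $\Box_{T_{f^{-1}}}(V) = f[V]$, and bijectivity of $f$. Consequently $T_{f^{-1}} \ast T_f = {\sqsupseteq_1}$ and $T_f \ast T_{f^{-1}} = {\sqsupseteq_2}$, exhibiting $T_f$ as an isomorphism in $\mathsf{mSp}$ with inverse $T_{f^{-1}}$.
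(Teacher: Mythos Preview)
Your proof is correct and follows essentially the same approach as the paper's: both define forward and backward relations from $f$ (the paper calls them $R_f$ and $T_f$; your $T_f$ coincides with the paper's $R_f$ since $y\in\mathrm{cl}(f(x))$ is exactly $f(x)\sqsupseteq_2 y$), verify they are meet-relations, establish monotonicity via the commutative square using hypothesis~2, and show mutual invertibility. The only cosmetic difference is that in the final step you invoke faithfulness of $\Box$ to conclude $T_{f^{-1}}\ast T_f=\sqsupseteq_1$ from $\Box_{T_f}\Box_{T_{f^{-1}}}=\mathrm{id}$, whereas the paper computes $(R_f\ast T_f)(f(y))=\mathrm{cl}_{\mathcal{K}_2}((R_f\circ T_f)(f(y)))=\sqsupseteq_2(f(y))$ directly; both routes are equally valid and your version is arguably a bit cleaner.
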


\begin{proof}
Since $f$ is bijective, we can consider the relations $R_{f} \subseteq X_{1} \times X_{2}$ and $T_{f} \subseteq X_{2} \times X_{1}$ which are given by
\begin{equation*}
(x,f(y)) \in R_{f} \Longleftrightarrow f(x) \sqsupseteq_{2} f(y)
\end{equation*}
and 
\begin{equation*}
(f(y),x) \in T_{f} \Longleftrightarrow f(y) \sqsupseteq_{2} f(x).
\end{equation*}
We prove that $R_{f}$ and $T_{f}$ are monotone-meet relations. We only exhibit the proof for the case of $R_{f}$ because the proof for $T_{f}$ is similar. Since $f$ is bijective and from 1. we obtain $\mathcal{Z}(X_{2}) = \{f[Z] \colon Z \in \mathcal{Z}(X_{1})\}$, $S(X_{2})=\{f[W]\colon W\in S(X_{1})\}$ and $\Box_{R_{f}}(f[U]) = U$, for every $U \in S(X_{1})$. Whence, it follows that for every $x,y \in X_{1}$, $f(x) \sqsupseteq_{2} f(y)$ if and only if $x \sqsupseteq_{1} y$. Notice that from these facts it is straightforward to see that $R_{f}$ is a meet-relation. In order to proof that $R_{f}$ is monotone we need to check that the following diagram commutes: 
\begin{displaymath}
\xymatrix{
S(X_{2}) \ar[r]^-{\Box_{R_{f}}} \ar[d]_-{m_{R_{2}}} & S(X_{1})\ar[d]^-{m_{R_{1}}}
\\
S(X_{2}) \ar[r]_-{\Box_{R_{f}}} & S(X_{1})
}
\end{displaymath}
Now we prove that $m_{R_{1}}(U) = \Box_{R_{f}}(m_{R_{2}}(f[U]))$ for all $U \in S(X_{1})$. Let $x \in \Box_{R_{f}}(m_{R_{2}}(f[U]))$, then $R_{f}(x)\subseteq m_{R_{2}}(f[U])$. Thus, for every $y \in X_{1}$ such that $f(x) \sqsupseteq_{2} f(y)$ we have $f(y) \in m_{R_{2}}(f[U])$. Observe that for every $Z \in \mathcal{Z}(X_{1})$ such that $(f(y),f[Z]) \in R_{2}$, we have $f[Z] \cap f[U] \neq \emptyset$. Hence, from 2. and since $f$ is bijective by assumption, we get $y \in m_{R_{1}}(U)$. I.e., for every $y \in X_{1}$, if $x \sqsupseteq_{1} y$, then $y \in m_{R_{1}}(U)$. But this is equivalent to say that $x \in m_{R_{1}}(U)$. Thus, $\Box_{R_{f}}(m_{R_{2}}(f[U])) \subseteq m_{R_{1}}(U)$. The proof of the reverse inclusion is analogue.

Finally, we prove that $T_{f}$ is the inverse of $R_{f}$ in $\mathsf{mSp}$. It is clear from the definitions of $R_{f}$ and $T_{f}$ that $R_{f} \circ T_{f} =\sqsupseteq_{2}$ and $T_{f} \circ R_{f} = \sqsupseteq_{1}$. Then, since $\sqsupseteq_{2}$ is a meet-relation, for every $y \in X_{1}$ we have
\begin{equation*}
(R_{f}\ast T_{f})(f(y))=\mathrm{cl}_{\mathcal{K}_{2}}((R_{f}\circ T_{f})(f(y)))=\mathrm{cl}_{\mathcal{K}_{2}}(\sqsupseteq_{2}(f(y)))=\sqsupseteq_{2}(f(y)).
\end{equation*}
Therefore, $R_{f}\ast T_{f}=\sqsupseteq_{2}$. The proof for $T_{f}\ast R_{f}=\sqsupseteq_{1}$ is similar.
\end{proof}

\begin{corollary} \label{homeomorphisms induced by mS-spaces}
Let $\langle X_{1}, \mathcal{K}_{1}, R_{1}\rangle$ be an $mS$-space and $\langle X_{2}, \mathcal{K}_{2} \rangle$ be an $S$-space. If $f \colon X_{1}\rightarrow X_{2}$ is a bijective map such that $\mathcal{K}_{2} = \{ f[V] \colon V \in \mathcal{K}_{1} \}$, then there exists a multirelation $R_{2} \subseteq X_{2} \times \mathcal{Z}(X_{2})$ such that $\langle X_{1},\mathcal{K}_{1}, R_{1}\rangle$ and $\langle X_{2}, \mathcal{K}_{2}, R_{2}\rangle$ are isomorphic $mS$-spaces.
\end{corollary}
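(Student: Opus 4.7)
The plan is to transport the multirelation $R_{1}$ along the bijection $f$ to obtain the desired $R_{2}$, and then invoke Lemma \ref{homeomorphisms induce isomorphisms} to conclude. First I would observe that, because $f$ is a bijection, the hypothesis $\mathcal{K}_{2} = \{ f[V] \colon V \in \mathcal{K}_{1} \}$ yields $S(X_{2}) = \{ f[U] \colon U \in S(X_{1}) \}$ (since $f[U^{c}] = f[U]^{c}$) and, in turn, $\mathcal{Z}(X_{2}) = \{ f[Z] \colon Z \in \mathcal{Z}(X_{1}) \}$ (a dually directed family in $\mathcal{K}_{1}$ is transported to a dually directed family in $\mathcal{K}_{2}$, and intersections are preserved by $f$).

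Next I would \emph{define} the multirelation $R_{2} \subseteq X_{2} \times \mathcal{Z}(X_{2})$ by declaring
\begin{equation*}
(f(x), f[Z]) \in R_{2} \Longleftrightarrow (x, Z) \in R_{1},
\end{equation*}
which is unambiguous because $f$ is bijective and every element of $X_{2}$ (resp.\ of $\mathcal{Z}(X_{2})$) has a unique preimage of the required form. This definition is engineered precisely so that condition 2 of Lemma \ref{homeomorphisms induce isomorphisms} holds by construction.

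The main verification is then showing that $\langle X_{2}, \mathcal{K}_{2}, R_{2} \rangle$ is an $mS$-space. For condition 1 of Definition \ref{t esp mon}, I would prove the identity $m_{R_{2}}(f[U]) = f[m_{R_{1}}(U)]$ for every $U \in S(X_{1})$: using the definition of $R_{2}$ and the injectivity of $f$, the statement ``for every $f[Z] \in R_{2}(f(x))$, $f[Z] \cap f[U] \neq \emptyset$'' is equivalent to ``for every $Z \in R_{1}(x)$, $Z \cap U \neq \emptyset$''. Since $m_{R_{1}}(U) \in S(X_{1})$ by hypothesis, $m_{R_{2}}(f[U]) = f[m_{R_{1}}(U)] \in S(X_{2})$. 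Condition 2 of Definition \ref{t esp mon} is then obtained by transporting the corresponding identity for $R_{1}$ through $f$: the sets of the form $f[L_{U}]$ agree with $L_{f[U]}$ (again by injectivity), and the family $\{ U \in S(X_{1}) \colon x \in m_{R_{1}}(U) \}$ is mapped bijectively onto $\{ W \in S(X_{2}) \colon f(x) \in m_{R_{2}}(W) \}$ thanks to the previous step.

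Finally I would apply Lemma \ref{homeomorphisms induce isomorphisms} to $f$: condition 1 of the lemma is exactly our hypothesis, and condition 2 is precisely how $R_{2}$ was defined. The main obstacle I anticipate is purely bookkeeping, namely being scrupulous with the bijection when comparing families of subsets and subbasic saturated subsets of the two spaces; no deep argument is needed beyond what Lemma \ref{homeomorphisms induce isomorphisms} already provides.
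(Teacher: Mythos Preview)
Your proposal is correct and follows essentially the same approach as the paper: the paper also transports $R_{1}$ along $f$ (writing the defining condition equivalently as $(f(x),W)\in R_{2}\Leftrightarrow (x,f^{-1}[W])\in R_{1}$), verifies $m_{R_{2}}(f[U])=f[m_{R_{1}}(U)]$ and $L_{f[U]}=f[L_{U}]$ to check the two conditions of Definition~\ref{t esp mon}, and then invokes Lemma~\ref{homeomorphisms induce isomorphisms}. The only difference is cosmetic, in how the transported relation is phrased.
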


\begin{proof}
We start by noticing that from the hypothesis on $f$ it is immediate that $f[U] \in S(X_{2})$ for all $U \in S(X_{1})$ and $f[Z] \in \mathcal{Z}(X_{2})$ for all $Z \in \mathcal{Z}(X_{1})$. Therefore, it easily follows that $L_{f[U]} = f[L_{U}]$ for all $U \in S(X_{1})$. In addition, it is also the case that for every $V \in S(X_{2})$ there exists a unique $U \in S(X_{1})$ such that $V = f[U]$. We define $R_{2} \subseteq X_{2} \times \mathcal{Z}(X_{2})$ as  
\begin{equation*}
(f(x),W) \in R_{2} \Longleftrightarrow (x,f^{-1}[W]) \in R_{1}.
\end{equation*}
From the bijectivity of $f$ and the facts of above it follows that $R_{2}$ is well defined. In order proof that $\langle X_{2}, \mathcal{K}_{2}, R_{2} \rangle$ is an $mS$-space, note that since $f^{-1}[f[Z]] = Z$ for all $Z \in R_{1}(x)$, then we get $f[R_{1}(x)] = R_{2}(f(x))$ for all $x \in X$. It is no hard to see that $m_{R_{1}}(U) = f^{-1}[m_{R_{2}}(f[U])]$ for all $U \in S(X_{1})$. If $V \in S(X_{2})$, then
\begin{equation} \label{equation coro}
m_{R_{2}}(V) = m_{R_{2}}(f[U]) = f[m_{R_{1}}(U)].
\end{equation} 
Since $\langle X_{1}, \mathcal{K}_{1}, R_{1}\rangle$ is an $mS$-space, then $m_{R_{1}}(U) \in S(X_{1})$ and from (\ref{equation coro}) we conclude $m_{R_{2}}(V) \in S(X_{2})$. This proofs 1. of Definition \ref{t esp mon}. On the other hand, as $\langle X_{1}, \mathcal{K}_{1}, R_{1} \rangle$ is an $mS$-space and $f$ is bijective, we get 
\begin{equation*}
\begin{array}{ccl}
R_{2}(f(x)) & = & f[R_{1}(x)] \\
                & = & \displaystyle f \left[ \bigcap\{L_{U} \colon U \in S(X_{1}) \text{ and } x \in m_{R_{1}}(U) \} \right] \\
                & = & \displaystyle \bigcap \{ L_{f[U]} \colon U \in S(X_{1}) \text{ and } f(x) \in m_{R_{2}}(f[U]) \}. \\
\end{array}
\end{equation*}
So, 2. of Definition \ref{t esp mon} holds. Finally, it is clear from the definition of $R_{2}$ that condition 2. of Lemma \ref{homeomorphisms induce isomorphisms} holds. Therefore, $\langle X_{1}, \mathcal{K}_{1}, R_{1}\rangle$ and $\langle X_{2}, \mathcal{K}_{2}, R_{2}\rangle$ are isomorphic $mS$-spaces, as granted.
\end{proof}

We recall that if $\langle X, \mathcal{K} \rangle$ is an $S$-space, then we have that $\langle X(\mathbf{S}(X)), \mathcal{K}_{\mathbf{S}(X)} \rangle$ is the dual $S$-space associated to the semilattice $\mathbf{S}(X)$. In \cite{Celani2020} it was proved that the map 
\begin{equation*}
H_{X} \colon X \rightarrow X(\mathbf{S}(X))
\end{equation*}
given by $H_{X}(x) = \{ U \in S(X) \colon x\in U\}$ is a homeomorphism between $S$-spaces such that $\mathcal{K}_{\mathbf{S}(X)} = \{ H_{X}[U] \colon U \in \mathcal{K}\}$. Therefore, as a straightforward consequence of Corollary \ref{homeomorphisms induced by mS-spaces} we obtain:

\begin{lemma} \label{HX}
Let $\langle X, \mathcal{K}, R \rangle$ be a $mS$-space. Then the map $H_{X} \colon X \rightarrow X(\mathbf{S}(X))$ defined by $H_{X}(x) = \{ U \in S(X) \colon x \in U \}$ induces an isomorphism of $mS$-spaces. 
\end{lemma}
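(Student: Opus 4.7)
The proof should be essentially an immediate application of Corollary \ref{homeomorphisms induced by mS-spaces} to the map $H_X$, so the plan is to identify the pieces and invoke that corollary.

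First, I would recall two facts from the preceding discussion that are imported from \cite{Celani2020}: the map $H_X \colon X \to X(\mathbf{S}(X))$ is a homeomorphism between the $S$-spaces $\langle X, \mathcal{K}\rangle$ and $\langle X(\mathbf{S}(X)), \mathcal{K}_{\mathbf{S}(X)}\rangle$, and moreover the subbase of the target is precisely $\mathcal{K}_{\mathbf{S}(X)} = \{H_X[U] \colon U \in \mathcal{K}\}$. In particular, $H_X$ is bijective and sends the chosen subbase of $X$ onto the chosen subbase of $X(\mathbf{S}(X))$.

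Next, I would apply Corollary \ref{homeomorphisms induced by mS-spaces} with $X_1 = X$, $\mathcal{K}_1 = \mathcal{K}$, $R_1 = R$, $X_2 = X(\mathbf{S}(X))$, $\mathcal{K}_2 = \mathcal{K}_{\mathbf{S}(X)}$, and $f = H_X$. The two hypotheses of the corollary are exactly the two facts recalled above, so it yields a multirelation $R' \subseteq X(\mathbf{S}(X)) \times \mathcal{Z}(X(\mathbf{S}(X)))$ for which $\langle X(\mathbf{S}(X)), \mathcal{K}_{\mathbf{S}(X)}, R'\rangle$ is an $mS$-space and the pair $\langle X, \mathcal{K}, R\rangle \to \langle X(\mathbf{S}(X)), \mathcal{K}_{\mathbf{S}(X)}, R'\rangle$ induced by $H_X$ is an isomorphism in $\mathsf{mSp}$, witnessed by the monotone meet-relations $R_{H_X}$ and $T_{H_X}$ built in Lemma \ref{homeomorphisms induce isomorphisms}.

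There is no substantial obstacle here: every nontrivial step has been packaged into the preceding lemmas and corollary. The only thing one might want to note explicitly is that the relation $R'$ thus produced is canonically determined by $R$ and $H_X$ via the defining equivalence $(H_X(x), W) \in R' \iff (x, H_X^{-1}[W]) \in R$, which is exactly the construction used in the proof of the corollary; the verification that $R'$ satisfies the two conditions of Definition \ref{t esp mon} and that condition 2.\ of Lemma \ref{homeomorphisms induce isomorphisms} holds was already carried out there, so nothing further is required.
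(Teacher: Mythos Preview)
Your proposal is correct and follows exactly the paper's own argument: recall from \cite{Celani2020} that $H_X$ is a bijection with $\mathcal{K}_{\mathbf{S}(X)} = \{H_X[U] \colon U \in \mathcal{K}\}$, and then apply Corollary \ref{homeomorphisms induced by mS-spaces}. The paper presents this as a one-line consequence of that corollary, which is precisely what you do.
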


The proofs of the following two Lemmas are similar to Propositions 30 and 46 of \cite{menchon2} with the only difference that in our proofs, Lemma \ref{Extensions topologically} and Proposition \ref{Extensions frames} are involved. We leave the details to the reader.

\begin{lemma} \label{prop dual monotonic space} 
Let $\langle \mathbf{A}, m \rangle \in \mathcal{MS}$. Then $\langle X({\mathbf{A}}), \mathcal{K}_{\mathbf{A}}, R_{m} \rangle$ is an $mS$-space. Moreover, $m_{R_{m}} = m^{\pi}$.
\end{lemma}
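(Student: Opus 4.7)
The plan is to verify the two conditions in Definition \ref{t esp mon} using the topological description of $m^{\pi}$ obtained in Section \ref{Extension of monotonic maps}. The key observation is that Proposition \ref{Extensions frames} gives exactly the formula defining $m_{R_{m}}$, so the moreover part will fall out of condition 1 almost for free.

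First I would handle condition 1 of Definition \ref{t esp mon} together with the moreover part. For any $U \in S(X(\mathbf{A}))$ there is $a \in A$ with $U = \beta(a)$. By Proposition \ref{Extensions frames} applied to $V = \beta(a)$ we have
\begin{equation*}
m^{\pi}(\beta(a)) = \{P \in X(\mathbf{A}) \colon \forall Z \in R_{m}(P)\, [Z \cap \beta(a) \neq \emptyset]\} = m_{R_{m}}(\beta(a)).
\end{equation*}
On the other hand, Lemma \ref{Extensions topologically} ensures $m^{\pi}(\beta(a)) = \beta(m(a)) \in S(X(\mathbf{A}))$, which proves that $m_{R_{m}}(U) \in S(X(\mathbf{A}))$ and simultaneously establishes the equality $m_{R_{m}} = m^{\pi}$ on $S(X(\mathbf{A}))$.

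Next I would verify condition 2 of Definition \ref{t esp mon}, namely that $R_{m}(P) = \bigcap \{ L_{U} \colon U \in S(X(\mathbf{A})) \text{ and } P \in m_{R_{m}}(U) \}$ for every $P \in X(\mathbf{A})$. The inclusion from left to right is immediate from the definitions: if $Z \in R_{m}(P)$ and $P \in m_{R_{m}}(U)$, then $Z \cap U \neq \emptyset$, so $Z \in L_{U}$. For the converse, I would argue by contradiction. Suppose $Z \in \mathcal{Z}(X(\mathbf{A}))$ belongs to the intersection but $Z \notin R_{m}(P)$; then $m^{-1}[P] \cap I_{\mathbf{A}}(Z) \neq \emptyset$, so pick $a \in A$ with $m(a) \in P$ and $a \in I_{\mathbf{A}}(Z)$, i.e.\ $\beta(a) \cap Z = \emptyset$. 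Taking $U := \beta(a)$, the first step of the proof yields $m_{R_{m}}(\beta(a)) = \beta(m(a))$, so $P \in m_{R_{m}}(\beta(a))$, whence $Z \in L_{\beta(a)}$ and $Z \cap \beta(a) \neq \emptyset$, a contradiction.

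The delicate point is matching the abstract dual neighbourhood description of Definition \ref{t esp mon} with the concrete definition of $R_{m}$ in terms of the ideals $I_{\mathbf{A}}(Z)$; the subbasic saturated subsets $Z$ are in bijective correspondence with order-ideals of $\mathbf{A}$ by Theorem \ref{Dualidad Ideales}, which is precisely what allows the witness $a \in I_{\mathbf{A}}(Z)$ above to exist and to recover the defining formula for $R_{m}$. Once this translation is in place, both conditions reduce to routine manipulation, and the moreover statement is just the instance of Proposition \ref{Extensions frames} on the subbasic level $S(X(\mathbf{A}))$.
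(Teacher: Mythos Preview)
Your proof is correct and follows precisely the approach the paper indicates: the paper does not spell out the argument but refers to Propositions 30 and 46 of \cite{menchon2}, noting that the only change is to invoke Lemma \ref{Extensions topologically} and Proposition \ref{Extensions frames}, which is exactly what you do. Your verification of condition 2 via the witness $a \in I_{\mathbf{A}}(Z)$ is the standard way to unwind the definition of $R_{m}$, and the identification $m_{R_{m}}(\beta(a)) = m^{\pi}(\beta(a)) = \beta(m(a))$ is the core of the argument in both cases.
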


Let $\mathbf{A}$ and $\mathbf{B}$ be two semilattices. Let $h \colon A \rightarrow B$ be a homomorphism. In \cite{Celani2020} it was proved that the binary relation $R_{h} \subseteq X({\mathbf{B}}) \times X({\mathbf{A}})$ defined by 
\begin{equation*}
(P,Q) \in R_{h} \Longleftrightarrow h^{-1}[P] \subseteq Q
\end{equation*}
is a meet-relation such that $\beta_{\mathbf{B}} h=\Box_{R_{h}} \beta_{\mathbf{A}}$.

\begin{lemma} \label{homo cond} 
Let $\langle \mathbf{A}, m \rangle, \langle \mathbf{B}, n \rangle \in \mathcal{MS}$ and $h \colon A \rightarrow B$ be a homomorphism. Then $h$ is a monotone homomorphism if and only if $R_{h}$ is a monotone meet-relation. In particular, if $h = id_{\mathbf{A}}$, then $R_{h}=\sqsupseteq_{X(\mathbf{A})}$.
\end{lemma}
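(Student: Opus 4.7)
The plan is to reduce the equivalence to a purely semilattice-theoretic identity by transporting the condition $h(ma) = nh(a)$ across the isomorphism $\beta_{\mathbf{A}} \colon A \to S(X(\mathbf{A}))$ (and its $\mathbf{B}$-counterpart), and then recognising the resulting identity as precisely the commuting square that defines monotonicity of $R_{h}$.

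First, I would collect the two key compatibilities. From the construction of $R_{h}$ recalled just before the statement, we have $\beta_{\mathbf{B}} \circ h = \Box_{R_{h}} \circ \beta_{\mathbf{A}}$. From Lemma \ref{prop dual monotonic space} we have $m_{R_{m}} = m^{\pi}$ (and $m_{R_{n}} = n^{\pi}$), and from Lemma \ref{Extensions topologically} the crucial equality $m^{\pi}(\beta_{\mathbf{A}}(a)) = \beta_{\mathbf{A}}(ma)$ on basic images, and analogously for $n$. Combining these, for each $a \in A$ the element $\beta_{\mathbf{B}}(h(ma))$ equals $\Box_{R_{h}} \beta_{\mathbf{A}}(ma) = \Box_{R_{h}} m_{R_{m}}(\beta_{\mathbf{A}}(a))$, while $\beta_{\mathbf{B}}(nh(a))$ equals $m_{R_{n}}(\beta_{\mathbf{B}}(h(a))) = m_{R_{n}} \Box_{R_{h}}(\beta_{\mathbf{A}}(a))$.

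Because $\beta_{\mathbf{A}}$ and $\beta_{\mathbf{B}}$ are isomorphisms of semilattices, and in particular $\beta_{\mathbf{A}}$ is surjective onto $S(X(\mathbf{A}))$ and $\beta_{\mathbf{B}}$ is injective, the identity $h(ma)=nh(a)$ for all $a\in A$ is equivalent to the equality of the two maps $\Box_{R_{h}} m_{R_{m}}$ and $m_{R_{n}} \Box_{R_{h}}$ on all of $S(X(\mathbf{A}))$. This is exactly the commutativity of the square in Definition \ref{monotonic meet relations}, which is the definition of $R_{h}$ being a monotone meet-relation. Thus the forward direction (monotone homomorphism implies monotone meet-relation) is immediate from the above chain of equalities, and the converse follows by evaluating the square on $\beta_{\mathbf{A}}(a)$ and applying $\beta_{\mathbf{B}}^{-1}$. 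The only subtle point is to keep straight that $R_{h}$ runs from $X(\mathbf{B})$ to $X(\mathbf{A})$, so $\Box_{R_{h}}$ sends $S(X(\mathbf{A}))$ to $S(X(\mathbf{B}))$ and the diagram matches; I would write this out carefully to avoid reversing the arrows.

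Finally, for the ``in particular'' clause, take $h = id_{\mathbf{A}}$. Then $(P,Q) \in R_{h}$ iff $P = h^{-1}[P] \subseteq Q$. It remains to identify $\{(P,Q) : P \subseteq Q\}$ with the dual of the specialization order on $X(\mathbf{A})$. Since the subbase is $\mathcal{K}_{\mathbf{A}} = \{\beta(a)^{c} : a \in A\}$, a quick check gives $P \in \mathrm{cl}(Q)$ iff every $\beta(a)^{c}$ containing $P$ contains $Q$, iff $a \notin P$ implies $a \notin Q$, iff $Q \subseteq P$; hence $P \sqsupseteq Q$ iff $P \subseteq Q$, so $R_{id_{\mathbf{A}}} = \sqsupseteq_{X(\mathbf{A})}$, as claimed.
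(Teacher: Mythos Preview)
Your argument is correct. The paper does not actually give a proof of this lemma: it only says that the proof is ``similar to Propositions 30 and 46 of \cite{menchon2} with the only difference that in our proofs, Lemma \ref{Extensions topologically} and Proposition \ref{Extensions frames} are involved,'' and leaves the details to the reader. Your reconstruction uses precisely the ingredients the paper points to (the identities $\beta_{\mathbf{B}}h = \Box_{R_h}\beta_{\mathbf{A}}$, $m_{R_m}=m^{\pi}$, and $m^{\pi}\beta_{\mathbf{A}} = \beta_{\mathbf{A}}m$) and reduces the statement to the commuting square of Definition~\ref{monotonic meet relations} via the fact that $\beta_{\mathbf{A}}$ is an isomorphism onto $S(X(\mathbf{A}))$; this is the intended argument, and your computation of the specialization order for the identity case is also correct.
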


\begin{lemma} \label{functorialidad}
Let $\langle \mathbf{A}, m \rangle, \langle \mathbf{B}, n \rangle, \langle \mathbf{C}, l \rangle \in \mathcal{MS}$. If $h \colon A \rightarrow B$ and $g \colon B \rightarrow C$ be monotone homomorphisms, then $R_{gh} = R_{h} \ast R_{g}$.
\end{lemma}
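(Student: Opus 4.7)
The plan is to prove the equality $R_{gh}(P) = (R_h \ast R_g)(P)$ pointwise for each $P \in X(\mathbf{C})$. By the recalled identity $(T \ast R)(x) = \mathrm{cl}_{\mathcal{K}_3}((T \circ R)(x))$, $(R_h \ast R_g)(P)$ is the smallest element of $C_{\mathcal{K}_{\mathbf{A}}}(X(\mathbf{A}))$ containing $(R_h \circ R_g)(P)$. On the other hand, since $(gh)^{-1}[P] \in \Fi(\mathbf{A})$ whenever $P \in X(\mathbf{C})$, Proposition \ref{prop_subclos and filters} shows that $R_{gh}(P) = \varphi((gh)^{-1}[P])$ is itself already subbasically closed. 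So it suffices to establish the sandwich $(R_h \circ R_g)(P) \subseteq R_{gh}(P) \subseteq (R_h \ast R_g)(P)$.

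The first inclusion is a direct unfolding of definitions: if $(P, P') \in R_g$ and $(P', Q) \in R_h$, then $g^{-1}[P] \subseteq P'$ and $h^{-1}[P'] \subseteq Q$, whence $(gh)^{-1}[P] = h^{-1}[g^{-1}[P]] \subseteq h^{-1}[P'] \subseteq Q$, so $(P, Q) \in R_{gh}$. Applying $\mathrm{cl}_{\mathcal{K}_{\mathbf{A}}}$ and using that $R_{gh}(P)$ is subbasically closed yields $(R_h \ast R_g)(P) \subseteq R_{gh}(P)$.

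For the reverse inclusion, I would write both sides as intersections of subbasic closed sets of the form $\beta_{\mathbf{A}}(a)$, reducing the task to showing that whenever $(R_h \circ R_g)(P) \subseteq \beta_{\mathbf{A}}(a)$, then $a \in (gh)^{-1}[P]$. I argue contrapositively: suppose $g(h(a)) \notin P$, so $h(a) \notin g^{-1}[P]$. A first application of Theorem \ref{separacion}, with the filter $g^{-1}[P]$ and the principal order-ideal $(h(a)]$ of $\mathbf{B}$, produces $P' \in X(\mathbf{B})$ with $g^{-1}[P] \subseteq P'$ and $h(a) \notin P'$; thus $(P, P') \in R_g$ and $a \notin h^{-1}[P']$. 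A second application of Theorem \ref{separacion}, now to the filter $h^{-1}[P']$ and the order-ideal $(a]$ of $\mathbf{A}$, yields $Q' \in X(\mathbf{A})$ with $h^{-1}[P'] \subseteq Q'$ and $a \notin Q'$. Then $Q' \in (R_h \circ R_g)(P)$ while $Q' \notin \beta_{\mathbf{A}}(a)$, giving the contrapositive.

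The only moving part is the double invocation of Theorem \ref{separacion} at the end, which must be orchestrated in the correct order: first separate $h(a)$ from $g^{-1}[P]$ inside $\mathbf{B}$ to obtain an intermediate irreducible filter $P'$, and only then separate $a$ from $h^{-1}[P']$ inside $\mathbf{A}$. Once the principal order-ideals $(h(a)]$ and $(a]$ are identified as the correct separating ideals at each stage, the rest is bookkeeping, and associativity of $\ast$ has already been recorded in Proposition \ref{composition mmeet-relations}.
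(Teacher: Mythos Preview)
Your proof is correct, but it takes a genuinely different route from the paper's. The paper argues abstractly via the faithful functor $\Box$: since $R_{gh}$, $R_h$, $R_g$ are meet-relations (by Lemma~\ref{homo cond}) and $\Box$ is faithful, it suffices to check $\Box_{R_{gh}}=\Box_{R_h\ast R_g}=\Box_{R_h\circ R_g}$, and this equality of the associated maps is exactly Proposition~3.22 of \cite{Celani2020}. In other words, the paper never touches individual irreducible filters; it reduces everything to an identity between the homomorphisms $\Box_{(-)}$ and then cites the earlier duality paper.

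Your argument instead works pointwise and is more self-contained: you identify $R_{gh}(P)=\varphi((gh)^{-1}[P])$ as already subbasically closed, sandwich $(R_h\circ R_g)(P)$ inside it by unfolding definitions, and then for the reverse inclusion you run two successive applications of Theorem~\ref{separacion} to produce a witness $Q'\in(R_h\circ R_g)(P)\setminus\beta_{\mathbf A}(a)$ whenever $g(h(a))\notin P$. This is essentially a direct proof of what Proposition~3.22 of \cite{Celani2020} is being invoked for, so you are trading a black-box citation for an explicit separation argument. The cost is a longer proof; the gain is that a reader sees exactly how irreducible filters in $\mathbf B$ mediate between those in $\mathbf C$ and $\mathbf A$, without needing to consult the cited paper or the faithfulness of $\Box$.
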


\begin{proof}
From Lemma \ref{homo cond} $R_{gh}$, $R_{h}$ and $R_{g}$ are monotone meet-relations. Then, since $\Box$ is faithful, for proving our claim it is enough to check $\Box_{R_{gh}} = \Box_{R_{h} \circ R_{g}}$. But this is essentially the Proposition 3.22 of \cite{Celani2020}.  
\end{proof} 

Notice that Lemmas \ref{prop dual monotonic space} and \ref{homo cond} allow to define a functor $G \colon \mathsf{mMS}^{op} \rightarrow \mathsf{mSp}$ as follows:
\begin{displaymath}
\begin{array}{ccc}
\langle\mathbf{A},m\rangle & \mapsto & \langle X({\mathbf{A}}), \mathcal{K}_{\mathbf{A}}, R_{m} \rangle \\
h                                      & \mapsto & R_{h}
\end{array}
\end{displaymath} 
On the other hand, observe that the assignments 
\begin{displaymath}
\begin{array}{ccc}
\langle X, \mathcal{K}, R \rangle & \mapsto & \langle S(X),m_{R}\rangle \\
R                                             & \mapsto & \Box_{R}
\end{array}
\end{displaymath} 
also define a functor $H \colon \mathsf{mSp} \rightarrow \mathsf{mMS}^{op}$. 

Now we state our main theorem.

\begin{theorem}\label{duality theorem}
The categories $\mathsf{mSp}$ and $\mathsf{mMS}$ are dually equivalent.
\end{theorem}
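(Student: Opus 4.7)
The plan is to prove that the functors $G: \mathsf{mMS}^{op} \to \mathsf{mSp}$ and $H: \mathsf{mSp} \to \mathsf{mMS}^{op}$ are quasi-inverse by exhibiting natural isomorphisms $\eta : \mathrm{Id}_{\mathsf{mMS}} \Rightarrow HG$ and $\varepsilon: \mathrm{Id}_{\mathsf{mSp}} \Rightarrow GH$. All the essential building blocks have been prepared in the preceding results; what remains is to assemble them and verify naturality.

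For $\eta$, I would set $\eta_{\langle \mathbf{A}, m\rangle} := \beta_{\mathbf{A}}$. Since $\beta_{\mathbf{A}}$ is already a semilattice isomorphism between $\mathbf{A}$ and $\mathbf{S}(X(\mathbf{A}))$, the only new task is to check that it is a monotone homomorphism between $\langle \mathbf{A}, m\rangle$ and $HG(\langle \mathbf{A}, m\rangle) = \langle \mathbf{S}(X(\mathbf{A})), m_{R_m}\rangle$. Combining Lemma \ref{prop dual monotonic space} (which gives $m_{R_m} = m^{\pi}$) with Lemma \ref{Extensions topologically} (which gives $m^{\pi}(\beta_{\mathbf{A}}(a)) = \beta_{\mathbf{A}}(m(a))$), this yields $m_{R_m} \beta_{\mathbf{A}} = \beta_{\mathbf{A}} m$, as required. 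Naturality of $\eta$ with respect to a monotone homomorphism $h: \mathbf{A} \to \mathbf{B}$ reduces to the identity $\beta_{\mathbf{B}} h = \Box_{R_h} \beta_{\mathbf{A}}$, which is the property of $\beta$ recalled just before Lemma \ref{homo cond}.

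For $\varepsilon$, Lemma \ref{HX} provides, for every $mS$-space $\langle X, \mathcal{K}, R\rangle$, an isomorphism $H_X : X \to X(\mathbf{S}(X))$ of $mS$-spaces; by Lemma \ref{homeomorphisms induce isomorphisms} this is realized as an invertible monotone meet-relation, which I take as $\varepsilon_{\langle X, \mathcal{K}, R\rangle}$. Naturality then asserts that for every monotone meet-relation $T \subseteq X_1 \times X_2$ one has $R_{\Box_T} \ast \varepsilon_{X_1} = \varepsilon_{X_2} \ast T$ in $\mathsf{mSp}$. Because the functor $\Box$ is faithful, it suffices to verify the resulting equality of semilattice homomorphisms after applying $\Box$; the two sides then unwind to expressions involving $\beta_{\mathbf{S}(X_i)}$ and $\Box_T$, and the identity collapses via the naturality of $\eta$ already established above.

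The main obstacle I anticipate is handling the compositional subtleties of $\mathsf{mSp}$: morphisms compose via $\ast$, the closure of set-theoretic composition in the second coordinate, so a direct pointwise relational argument is awkward. The way around this is to use the faithful functor $\Box: \mathsf{mSp}^{op} \to \mathsf{mMS}$ together with its interaction with composition, as recorded in Lemma \ref{functorialidad} and Proposition 3.22 of \cite{Celani2020}, to transport every relational identity to an algebraic one, where the naturality of $\beta$ applies directly. Once the two naturality conditions are verified, the pair $(G, H)$ realizes the claimed duality $\mathsf{mSp} \simeq \mathsf{mMS}^{op}$.
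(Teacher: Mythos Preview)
Your proposal is correct and follows essentially the same approach as the paper: both use $\beta_{\mathbf{A}}$ for the natural isomorphism on the algebraic side (invoking Lemma~\ref{Extensions topologically} together with $m_{R_m}=m^{\pi}$) and the map $H_X$ from Lemma~\ref{HX} for the natural isomorphism on the spatial side, with naturality inherited from the underlying $S$-space/semilattice duality of \cite{Celani2020}. The paper's argument is terser, offloading the naturality verifications to Theorem~3.24 of \cite{Celani2020}, whereas you spell out explicitly the use of the identity $\beta_{\mathbf{B}} h = \Box_{R_h}\beta_{\mathbf{A}}$ and the faithfulness of $\Box$ to reduce the relational naturality square to an algebraic one.
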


\begin{proof}
Let $\langle \mathbf{A}, m \rangle \in \mathcal{MS}$. From Theorem 3.24 of \cite{Celani2020} and Lemma \ref{Extensions topologically} it follows that the map $\beta \colon A \rightarrow S(X({\A}))$ defines a natural isomorphism between $H \circ G$ and $Id_{\mathsf{mMS}^{op}}$. On the other hand, let $\langle X, \mathcal{K}, R \rangle$ be a $mS$-space. Then, from Lemma \ref{HX} and again from Theorem 3.24 of \cite{Celani2020}, we get that the map $H_{X} \colon X \rightarrow X(\mathbf{S}(X))$ leads to a natural isomorphism between $G \circ H$ and $Id_{\mathsf{mSp}}$. This concludes the proof.  
\end{proof}

\section{Characterization of homomorphic images and congruences by means of lower-Vietoris-type topologies}\label{homomorphic images}

In \cite{CelaniCalomino} Celani and Calomino proved that homomorphic images of a distributive semilattice are characterized by one-to-one meet-relations and by lower Vietoris families on the dual space. This characterization was motivated by the results given in \cite{Guram-Jansana II} and by the fact that the class of distributive semilattices is not a variety. Of course, this is not the case of semilattices and monotone semilattices. In this section we characterize congruences of semilattices and monotone semilattices by means of lower-Vietoris-type topologies and monotone lower-Vietoris-type topologies of their respective associated dual spaces. To achieve that goal, we start by studying the existing relation between homomorphic images and lower-Vietoris-type topologies in the variety of semilattices and later we extend this study to the variety of monotone semilattices.

\begin{definition}
Let $\langle X_1, \mathcal{K}_{1} \rangle$ and $\langle X_2, \mathcal{K}_{2} \rangle$ be two $S$-spaces. A meet-relation $R \subseteq X_1 \times X_2$ is \emph{one-to-one} if for each $x \in X_1$ and $U \in S(X_1)$ with $x \notin U$, there exists $V \in S(X_2)$ such that $U \subseteq \Box_R(V )$ and $x \notin \Box_R(V )$. 
\end{definition}

The following result is the $S$-space version of Theorem 36 (2) of \cite{CelaniCalomino}. Since its proof is similar, we omit it.

\begin{theorem}\label{onto one-to-one}
Let $\langle X_1, \mathcal{K}_{1} \rangle$ and $\langle X_2, \mathcal{K}_{2} \rangle$ be two $S$-spaces and $R \subseteq X_1 \times X_2$ be a meet-relation. Then the homomorphism $\Box_R \colon S(X_2) \rightarrow S(X_1)$ is onto if and only if $R$ is one-to-one.
\end{theorem}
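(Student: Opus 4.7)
The plan is to prove the two implications separately, noting that the forward direction is essentially a tautology while the reverse direction requires a compactness argument that extracts a single witness $V \in S(X_2)$ from a potentially large family $\{V_x\}_{x \notin U}$.

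For the forward direction, I would assume $\Box_R$ is surjective. Given $x \in X_1$ and $U \in S(X_1)$ with $x \notin U$, surjectivity yields $V \in S(X_2)$ with $\Box_R(V) = U$. Then $U \subseteq \Box_R(V)$ trivially, and $x \notin U = \Box_R(V)$, so the defining property of one-to-one is verified.

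For the reverse direction, I would fix $U \in S(X_1)$ and show it lies in the image of $\Box_R$. If $U = X_1$, then $V = X_2 \in S(X_2)$ works since $\Box_R(X_2) = X_1$. Otherwise, for each $x \in U^c$ the one-to-one hypothesis produces $V_x \in S(X_2)$ with $U \subseteq \Box_R(V_x)$ and $x \in \Box_R(V_x)^c$. The key observation is that because $V_x \in S(X_2)$ and $R$ is a meet-relation, $\Box_R(V_x) \in S(X_1)$, hence $\Box_R(V_x)^c \in \mathcal{K}_1$. Thus $\{\Box_R(V_x)^c : x \in U^c\}$ is an open cover of $U^c$ by subbasic opens. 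Since $U \in S(X_1)$ means $U^c \in \mathcal{K}_1$, which is compact by axiom (S2), I can extract a finite subcover $\Box_R(V_{x_1})^c, \ldots, \Box_R(V_{x_n})^c$. Taking complements, $\bigcap_{i=1}^n \Box_R(V_{x_i}) \subseteq U$, and combined with the reverse inclusion coming from $U \subseteq \Box_R(V_{x_i})$ for each $i$, equality holds. Since $\Box_R$ is a semilattice homomorphism (hence preserves finite meets), setting $V = \bigcap_{i=1}^n V_{x_i} \in S(X_2)$ gives $\Box_R(V) = U$.

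The main obstacle I anticipate is ensuring the compactness argument is properly justified: one needs to recognize that $U^c$ belongs to $\mathcal{K}_1$ (and is therefore compact by (S2)) and that each $\Box_R(V_x)^c$ is subbasic open in $X_1$. Both facts follow from the definition of meet-relation and the definition of $S(X)$ as complements of subbasic opens; once they are laid out clearly, Alexander's subbase theorem (implicitly via the compactness of elements of $\mathcal{K}$) delivers the finite reduction, and closure of $S(X_2)$ under finite intersections together with the homomorphism property of $\Box_R$ finish the argument.
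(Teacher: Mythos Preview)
Your proof is correct. The paper omits its own proof of this theorem, referring instead to Theorem 36(2) of Celani--Calomino for the analogous result in the distributive setting; your argument---surjectivity giving the one-to-one condition trivially, and in the converse direction using compactness of $U^c \in \mathcal{K}_1$ to reduce the family $\{V_x\}$ to a finite one whose intersection maps onto $U$---is exactly the standard route and is presumably what that reference does as well.
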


\begin{corollary}
Let $\A$ and $\mathbf{B}$ be two semilattices and $h \colon A \rightarrow B$ be a homomorphism. Then $h$ is onto if and only if the meet-relation $R_h \subseteq X(\mathbf{B}) \times X(\A)$ is one-to-one. 
\end{corollary}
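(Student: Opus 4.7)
The plan is to reduce this Corollary directly to Theorem \ref{onto one-to-one} via the commutative square relating $h$ and $\Box_{R_h}$. Recall from the paragraph preceding Lemma \ref{homo cond} that the meet-relation $R_h \subseteq X(\mathbf{B}) \times X(\mathbf{A})$ satisfies the identity $\beta_{\mathbf{B}} \circ h = \Box_{R_h} \circ \beta_{\mathbf{A}}$, where $\beta_{\mathbf{A}} \colon A \to S(X(\mathbf{A}))$ and $\beta_{\mathbf{B}} \colon B \to S(X(\mathbf{B}))$ are the isomorphisms provided by the duality of \cite{Celani2020}.

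The first step is to observe that, since $\beta_{\mathbf{A}}$ and $\beta_{\mathbf{B}}$ are isomorphisms of semilattices (in particular bijections), the map $h$ is surjective if and only if $\beta_{\mathbf{B}} \circ h$ is surjective onto $S(X(\mathbf{B}))$, which by the commuting identity equals $\Box_{R_h} \circ \beta_{\mathbf{A}}$. The second step is that, since $\beta_{\mathbf{A}}$ is itself surjective onto $S(X(\mathbf{A}))$, the composite $\Box_{R_h} \circ \beta_{\mathbf{A}}$ is surjective precisely when $\Box_{R_h} \colon S(X(\mathbf{A})) \to S(X(\mathbf{B}))$ is surjective. Stringing these equivalences together yields that $h$ is onto if and only if $\Box_{R_h}$ is onto.

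The final step is to invoke Theorem \ref{onto one-to-one} applied to the meet-relation $R_h$ between the $S$-spaces $\langle X(\mathbf{B}), \mathcal{K}_{\mathbf{B}} \rangle$ and $\langle X(\mathbf{A}), \mathcal{K}_{\mathbf{A}} \rangle$: surjectivity of $\Box_{R_h}$ is equivalent to $R_h$ being one-to-one, which closes the chain of equivalences.

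There is no real obstacle here; everything is already done. The only point requiring a brief justification is the transfer of surjectivity across the isomorphisms $\beta_{\mathbf{A}}$ and $\beta_{\mathbf{B}}$, which is immediate from bijectivity. The proof should therefore consist of a short display of the equivalences, citing Theorem \ref{onto one-to-one} at the last step.
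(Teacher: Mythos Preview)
Your proposal is correct and is precisely the intended argument: the paper states this result as an immediate corollary of Theorem \ref{onto one-to-one} without giving a proof, and the implicit reasoning is exactly the transfer of surjectivity along the isomorphisms $\beta_{\mathbf{A}}$, $\beta_{\mathbf{B}}$ via the identity $\beta_{\mathbf{B}} h = \Box_{R_h}\beta_{\mathbf{A}}$ that you spell out.
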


Let $\A$ be a semilattice and $\langle X, \K \rangle$ be an $S$-space. Consider a one-to-one meet-relation $R \subseteq X \times X(\A)$ and let $\mathcal{F}_R = \{R(x) \colon x \in X\}$. It is clear that $\mathcal{F}_R \subseteq C_{\K}(X(\A))$. For every $a \in A$ we consider the set
\begin{equation*}
H_a = \{ R(x) \colon R(x) \cap \beta (a)^c \neq \emptyset \}.
\end{equation*}

\begin{remark}\label{basecong}
Notice that for every $a \in A$ and $x \in X$ the following holds 
\begin{equation*}
R(x) \in H_a  \Longleftrightarrow x \in \Box_R(\beta(a))^c.
\end{equation*}
\end{remark}

In \cite{Ivanova-dimova}, Ivanova-Dimova introduces a new lower-Vietoris-type hypertopology that is a generalized version of the lower Vietoris topology.

\begin{definition} (\cite{Ivanova-dimova}) 
Let $X$ be a set. Let $\mathcal{F}$ be a family of non-empty subsets of $X$ and $\mathcal{O}$ be a topology on $\mathcal{F}$. For each $U\subseteq X$ let us consider 
\begin{equation*}
U^-_\mathcal{F} = \{ Y \in \mathcal{F} \colon Y \cap U \neq \emptyset \}.
\end{equation*}
Let $\mathcal{M}_{\mathcal{F}}=\{U^-_\mathcal{F} \colon U \subseteq X \}$. We say that $\mathcal{O}$ is a \emph{lower-Vietoris-type topology} on $\mathcal{F}$ if $\mathcal{O} \cap \mathcal{M}_{\mathcal{F}}$ is a subbase for $\mathcal{O}$.
\end{definition}

\begin{proposition} \label{subbasis lower type Vietoris}
Let $\A$ be a semilattice and $\langle X, \K \rangle$ be an $S$-space. Let $R \subseteq X \times X(\A)$ be a one-to-one meet-relation. Then:
\begin{enumerate}
\item $R(x) \neq \emptyset$ for all $x \in X$.
\item The family $\mathcal{M} = \{H_a \colon a \in A\}$ is a subbasis for a topology on $\mathcal{F}_R$.
\item The topology $\tau_\mathcal{M}$ generated by $\mathcal{M}$ is a lower-Vietoris-type topology on $\mathcal{F}_R$.
\end{enumerate}
\end{proposition}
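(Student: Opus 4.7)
The plan is to treat the three items in order, with the first requiring the most conceptual work and the others following by direct unpacking of definitions. Recall that for a meet-relation $R \subseteq X \times X(\A)$, the associated map is $\Box_R(V) = \{x \in X \colon R(x) \subseteq V\}$, so the one-to-one hypothesis reads: whenever $x \notin U \in S(X)$, there exists $V \in S(X(\A))$ with $U \subseteq \Box_R(V)$ and $R(x) \not\subseteq V$.

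For part 1, I would argue by contradiction. Fix $x \in X$ and suppose that $R(x) = \emptyset$. Using condition (S1) of Definition \ref{S-spaces}, since $X = \bigcup \mathcal{K}$, there is some $W \in \mathcal{K}$ containing $x$; then $U = W^c \in S(X)$ satisfies $x \notin U$. Applying the one-to-one condition to this pair $(x, U)$ yields $V \in S(X(\A))$ with $R(x) \not\subseteq V$. But $R(x) = \emptyset \subseteq V$ trivially, a contradiction. Hence $R(x) \neq \emptyset$.

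For part 2, it suffices to show $\bigcup \mathcal{M} = \mathcal{F}_R$, so that $\mathcal{M}$ covers $\mathcal{F}_R$. Given $R(x) \in \mathcal{F}_R$, part 1 gives a $P \in R(x) \subseteq X(\A)$. Since every element of $X(\A)$ is a \emph{proper} irreducible filter, there exists $a \in A$ with $a \notin P$, hence $P \in \beta(a)^c$. Therefore $R(x) \cap \beta(a)^c \neq \emptyset$, so $R(x) \in H_a$, and $\mathcal{M}$ is a subbasis for a topology $\tau_\mathcal{M}$ on $\mathcal{F}_R$.

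For part 3, the key observation is that for each $a \in A$,
\begin{equation*}
H_a = \{R(x) \in \mathcal{F}_R \colon R(x) \cap \beta(a)^c \neq \emptyset\} = (\beta(a)^c)^-_{\mathcal{F}_R},
\end{equation*}
so $\mathcal{M} \subseteq \mathcal{M}_{\mathcal{F}_R}$. Since $\mathcal{M} \subseteq \tau_\mathcal{M}$ by construction, we have $\mathcal{M} \subseteq \tau_\mathcal{M} \cap \mathcal{M}_{\mathcal{F}_R} \subseteq \tau_\mathcal{M}$; because $\mathcal{M}$ is already a subbase for $\tau_\mathcal{M}$, the intermediate set $\tau_\mathcal{M} \cap \mathcal{M}_{\mathcal{F}_R}$ is a subbase as well, which is precisely the definition of a lower-Vietoris-type topology. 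The only non-routine step in the whole argument is the application of (S1) in part 1 to guarantee the existence of a witnessing $U \in S(X)$; once that is in hand, everything else is a direct translation of definitions.
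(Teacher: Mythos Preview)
Your proof is correct and follows essentially the same approach as the paper. The paper's own proof of part~1 is identical in substance: it invokes (S1) to find $U \in S(X)$ with $x \notin U$, then applies the one-to-one condition; the only cosmetic difference is that the paper concludes directly that $R(x) \in H_a$ (via Remark~\ref{basecong}) rather than phrasing it as a contradiction to $R(x)=\emptyset$, which incidentally already yields the covering statement needed for part~2. For parts~2 and~3 the paper simply defers to external references (Lemma~38 of \cite{CelaniCalomino} and Corollary~2.11 of \cite{Ivanova-dimova}), whereas you have written out the arguments explicitly; your route for part~2 via the properness of $P \in R(x)$ is a slight variation but equally valid.
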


\begin{proof}
We only need to prove 1. since the proof of 2. is analogue to the proof of Lemma 38 of \cite{CelaniCalomino} and 3. is a consequence of 2. and Corollary 2.11 of \cite{Ivanova-dimova}. Let $x \in X$. Since $\K$ is a subbasis, there exists $U \in S(X)$ such that $x \notin U$. As $R$ is a one-to-one meet-relation, there exists $a \in A$ such that $U \subseteq \Box_R(\beta(a))$ and $x \notin \Box_R(\beta(a))$. Therefore, from Remark \ref{basecong}, $R(x) \in H_{a}$ and this implies that $R(x) \neq \emptyset$.
\end{proof}

\begin{remark}
We would like to stress that although the proof of Proposition \ref{subbasis lower type Vietoris} (2) is similar to the proof of Lemma 38 in \cite{CelaniCalomino} when considering S-spaces, the family $\mathcal{M}$ in general is not a basis. To check this, let us consider the following non-distributive lattice $\mathbf{L}$ together with the identity map $id_{\bf{L}} \colon L \rightarrow L$:
\label{example1}
\begin{figure}[H]
\begin{center}
\begin{tikzpicture}
\filldraw [thick]
(0,1) circle (2pt) node[left] {$a$} --
(1,0) circle (2pt) node[right](x) {$0$}     -- 
(1,1) circle (2pt) node[right](y) {$b$}--(1,2) 
(1,0)--(2,1) circle (2pt) node[right] {$c$} --
(1,2) circle (2pt) node[right] {$e$}--(0,1)
(2,1)--(3,2)circle (2pt) node[right] {$d$}--
(2,3)circle (2pt) node[right] {$1$}-- (1,2);
\draw (1,-1) node {$\mathbf{L}$};
\filldraw [thick]
(5,0) circle (2pt) node[left] {$P_1$} 
(6,0) circle (2pt) node[right] {$P_2$}     
(7,1) circle (2pt) node[right] {$P_3$} --
(7,0) circle (2pt) node[right] {$P_4$};
\draw (6,-1) node {$X(\mathbf{L})$};
\end{tikzpicture}
\label{ej1}
\end{center}
\end{figure}
It is no hard to see that $X(\mathbf{L}) = \{ P_{1}, P_{2}, P_{3}, P_{4} \}$, where $P_1 = \{ a,e,1 \}$, $P_2 = \{b,e,1\}$, $P_3 = \{c,e,d,1\}$ and $P_4=\{d,1\}$. It follows that $(P,Q) \in R_{id_{\mathbf{L}}}$ if and only if $P \subseteq Q$. Therefore, $\mathcal{F}_{R_{id_{\mathbf{L}}}} = \{ \{P_{1}\}, \{P_{2}\}, \{P_{3}\}, \{P_{3},P_{4}\}\}$. Notice that $H_{0} = \mathcal{F}_{R_{id_{\mathbf{L}}}}$, $H_{a} = \{ \{P_{2}\}, \{P_{3}\}, \{P_{3}, P_{4}\}\}$, $H_{b} = \{\{P_{1}\}, \{P_{3}\}, \{P_{3}, P_{4}\}\}$, $H_{c} = \{\{P_{1}\}, \{P_{2}\}, \{P_{3}, P_{4}\}\}$, $H_{d} = \{ \{P_{1}\}, \{P_{2}\} \}$, $H_{e} = \{ \{P_{3}, P_{4}\} \}$ and $H_{1}=\emptyset$. Nevertheless, $H_{a} \cap H_{d} = \{ \{P_{2}\} \} \notin \mathcal{M}$. Whence $\mathcal{M}$ is not a basis. 
\end{remark}

We omit the proof of the following Lemma because it is similar to the proof of Lemma 39 (3) in \cite{CelaniCalomino}.

\begin{lemma} \label{cerradoVietoris}
Let $\A$ be a semilattice and $\langle X, \K \rangle$ be an $S$-space. Let $R \subseteq X \times X(\A)$ be a one-to-one meet-relation and let $\varphi$ be the map defined in (\ref{definition of fi}). Then $Y \subseteq \mathcal{F}_R$ is a subbasic closed subset of $\langle \mathcal{F}_R, \mathcal{M} \rangle$ if and only if there exists $F \in \Fi(\A)$ such that $Y = \{ R(x) \colon R(x) \subseteq \varphi (F)\}$.
\end{lemma}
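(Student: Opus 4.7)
The plan is to unfold the definition of a subbasic closed subset of $\langle \mathcal{F}_R, \mathcal{M}\rangle$ and match it against the filter description via the map $\varphi$ of equation (\ref{definition of fi}). The starting observation is that, directly from the definition of $H_a$,
\[
H_a^c = \{R(x) \in \mathcal{F}_R : R(x) \subseteq \beta(a)\}
\]
for every $a \in A$. Hence a subbasic closed subset of $\langle \mathcal{F}_R, \mathcal{M}\rangle$ is exactly a set of the form $\bigcap_{a \in S} H_a^c$ for some $S \subseteq A$.

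For the implication $(\Leftarrow)$, given $F \in \Fi(\A)$, the equality $\varphi(F) = \bigcap_{a \in F} \beta(a)$ from (\ref{definition of fi}) immediately gives
\[
\{R(x) : R(x) \subseteq \varphi(F)\} = \bigcap_{a \in F} H_a^c,
\]
which is a subbasic closed subset of $\langle \mathcal{F}_R, \mathcal{M}\rangle$.

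For $(\Rightarrow)$, given a subbasic closed subset $Y = \bigcap_{a \in S} H_a^c$, I would take $F = F(S)$, the filter generated by $S$, and show $Y = \{R(x) : R(x) \subseteq \varphi(F)\}$. The inclusion $\{R(x) : R(x) \subseteq \varphi(F)\} \subseteq Y$ follows from $S \subseteq F$ combined with the description of $H_a^c$ above. For the reverse inclusion, fix $R(x) \in Y$ and $b \in F$: then either $b = 1$ or there are $a_{1}, \dots, a_{n} \in S$ with $a_{1} \wedge \cdots \wedge a_{n} \leq b$, and using that $\beta$ is a semilattice embedding and order-preserving,
\[
R(x) \subseteq \beta(a_{1}) \cap \cdots \cap \beta(a_{n}) = \beta(a_{1} \wedge \cdots \wedge a_{n}) \subseteq \beta(b).
\]
Intersecting over $b \in F$ yields $R(x) \subseteq \varphi(F)$.

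The argument is essentially bookkeeping once $H_a^c$ has been rewritten as a containment condition on $\beta(a)$; the only mildly delicate point is moving from an arbitrary generating set $S \subseteq A$ to the filter $F(S)$, and this is handled by the fact that $\beta$ preserves finite meets and order. This mirrors the pattern of the proof of Lemma 39(3) in \cite{CelaniCalomino}, here in the setting of $S$-spaces.
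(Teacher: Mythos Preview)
Your proof is correct and follows essentially the same approach the paper intends: the paper omits the proof of this lemma, pointing instead to Lemma~39(3) of \cite{CelaniCalomino}, and your argument is precisely the natural adaptation of that proof to the $S$-space setting, as you yourself note. The key step---rewriting $H_a^c$ as $\{R(x) : R(x) \subseteq \beta(a)\}$ and then passing from an arbitrary index set $S$ to the filter $F(S)$ via the fact that $\beta$ preserves finite meets---is exactly what is needed.
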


\begin{theorem} \label{theo-F space}
Let $\A$ be a semilattice and $\langle X, \K \rangle$ be an $S$-space. Let $R \subseteq X \times X(\A)$ be a one-to-one meet-relation. Then $\langle \mathcal{F}_R, \mathcal{M} \rangle$ is an $S$-space. 
\end{theorem}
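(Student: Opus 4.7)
The plan is to verify each of the four axioms (S1)--(S4) for $\langle\mathcal{F}_R, \mathcal{M}\rangle$. The main tool will be the surjective map $\pi\colon X\to \mathcal{F}_R$ given by $\pi(x)=R(x)$, together with the identity $\pi^{-1}(H_a) = \Box_R(\beta(a))^c$ for every $a\in A$; this both makes $\pi$ continuous and shows that the subbasic opens of $\mathcal{F}_R$ pull back to subbasic opens of $X$ of a very restricted form. The one-to-one hypothesis on $R$ can be restated as follows: for every $U\in\mathcal{K}$ and every $x\in U$, there exists $a\in A$ with $x\in \Box_R(\beta(a))^c\subseteq U$, so the family $\{\Box_R(\beta(a))^c\colon a\in A\}$ refines $\mathcal{K}$ at every point.

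Axioms (S1) and (S2) follow without much effort. For $T_0$, if $R(x)\neq R(y)$, the meet-relation identity $R(y)=\bigcap\{\beta(b)\colon R(y)\subseteq\beta(b)\}$ yields some $b\in A$ with $R(x)\in H_b$ and $R(y)\notin H_b$; the covering $\mathcal{F}_R=\bigcup\mathcal{M}$ follows from Proposition \ref{subbasis lower type Vietoris} together with the properness of any element of any $R(x)$. Closure of $\mathcal{M}$ under finite unions comes from $\beta(a)\cap\beta(b)=\beta(a\wedge b)$, which gives $H_a\cup H_b=H_{a\wedge b}$; the empty set is $H_1$; and each $H_a=\pi(\Box_R(\beta(a))^c)$ is the continuous image of the compact open $\Box_R(\beta(a))^c$, hence compact.

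For \ref{S3}, start from $R(x)\in H_a\cap H_b$, so $x\in \Box_R(\beta(a))^c\cap \Box_R(\beta(b))^c$ with both in $\mathcal{K}$. Apply \ref{S3} in $X$ to obtain $D',W'\in\mathcal{K}$ with $x\in D'$, $x\notin W'$ and $D'\subseteq(\Box_R(\beta(a))^c\cap \Box_R(\beta(b))^c)\cup W'$. The one-to-one property furnishes $c\in A$ with $x\in \Box_R(\beta(c))^c\subseteq D'$; refining $W'$ at every one of its points by one-to-one and extracting a finite subcover (possible by compactness of $W'$) yields $d_1,\dots,d_n\in A$ with $W'=\Box_R(\beta(d_1))^c\cup\cdots\cup \Box_R(\beta(d_n))^c = \Box_R(\beta(d))^c$ for $d=d_1\wedge\cdots\wedge d_n$, and none of the $\Box_R(\beta(d_i))^c$ contains $x$. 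Pushing the resulting inclusion through $\pi$ gives $H_c\subseteq(H_a\cap H_b)\cup H_d$ with $R(x)\in H_c\setminus H_d$, as required.

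The delicate step is \ref{S4}. Given a subbasic closed subset $Y$ of $\mathcal{F}_R$, Lemma \ref{cerradoVietoris} writes $Y=Y_F:=\{R(x)\colon R(x)\subseteq \varphi(F)\}$ for some $F\in \Fi(\mathbf{A})$, and one checks that $\pi^{-1}(Y_F)=\bigcap\{\Box_R(\beta(a))\colon a\in F\}$ lies in $C_{\mathcal{K}}(X)$. A $Y_F$-family $\mathcal{J}=\{H_a^c\colon a\in\Sigma\}\subseteq S(\mathcal{F}_R)$ is matched with $\mathcal{J}_X=\{\Box_R(\beta(a))\colon a\in\Sigma\}\subseteq S(X)$; using $H_a^c\cap H_h^c=H_{a\wedge h}^c$ and taking $\pi$-preimages, the $Y_F$-family condition on $\mathcal{J}$ becomes precisely the $\pi^{-1}(Y_F)$-family condition on $\mathcal{J}_X$, and likewise $Y_F\cap H_a\neq\emptyset$ is equivalent to $\pi^{-1}(Y_F)\cap \Box_R(\beta(a))^c\neq\emptyset$. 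Invoking \ref{S4} in $X$ produces $x\in\pi^{-1}(Y_F)\cap\bigcap\{\Box_R(\beta(a))^c\colon a\in\Sigma\}$, and its image $R(x)$ lies in $Y_F\cap\bigcap\{H_a\colon a\in\Sigma\}$, concluding the proof. The principal obstacle is exactly this translation of the $Y$-family structure between $\mathcal{F}_R$ and $X$, since only after verifying that the subbasic lattice of closed sets is faithfully preserved by $\pi$ can the separation axiom of $X$ be transferred to $\mathcal{F}_R$.
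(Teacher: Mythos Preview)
Your proof is correct and follows the same overall strategy as the paper: verify (S1)--(S4) by transferring each axiom from $X$ to $\mathcal{F}_R$ via the correspondence $R(x)\in H_a \Leftrightarrow x\in\Box_R(\beta(a))^c$ (the paper's Remark~\ref{basecong}, your $\pi^{-1}(H_a)=\Box_R(\beta(a))^c$). Your explicit introduction of the quotient map $\pi$ is a clean way to package this transfer, and your treatment of (S4) via pulling back the $Y_F$-family to a $\pi^{-1}(Y_F)$-family in $X$ is exactly what the paper does (with $G=\pi^{-1}(Y_F)$ and $\mathcal{I}=\mathcal{J}_X$).

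One remark on (S3): your refinement-and-compactness argument to rewrite $W'$ as $\Box_R(\beta(d))^c$ is correct but more work than needed. By Theorem~\ref{onto one-to-one} the one-to-one hypothesis makes $\Box_R\colon S(X(\mathbf{A}))\to S(X)$ surjective, so every $U\in\mathcal{K}$ is already equal to some $\Box_R(\beta(a))^c$; in particular $D'$ and $W'$ can each be written in this form in one step. This is presumably why the paper dismisses (S3) in a single sentence. Your pointwise argument recovers the same conclusion, just less directly.
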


\begin{proof}
We prove the conditions of Definition \ref{S-spaces}. By Proposition \ref{subbasis lower type Vietoris} (2) the family $\mathcal{M}$ is a subbasis for a topology on $\mathcal{F}_R$. We see that $\langle \mathcal{F}_R, \mathcal{M} \rangle$ is $T_0$. Let $x,y \in X$ be such that $R(x) \neq R(y)$. Since $\langle X, \K \rangle$ is an $S$-space, there exists $U \in S(X)$ such that $x \in U$ and $y \notin U$. As $R$ is a one-to-one meet-relation, there exists $a \in A$ such that $U\subseteq \Box_R(\beta(a))$ and $y \notin \Box_R(\beta(a))$. From Remark \ref{basecong} it follows that $R(y) \in H_a$ and $R(x) \notin H_a$. So, ($S_1$) is satisfied.

We see ($S_2$). It is clear that $H_1 = \emptyset$ and $\mathcal{M}$ is closed under finite unions. Let $a \in A$ and consider $H_a$. Observe that as a consequence of Remark \ref{basecong}, we have $H_a \subseteq \bigcup \{H_b \colon b \in B \subseteq A\}$ if and only if $\Box_R(\beta(a))^c \subseteq \bigcup \{\Box_R(\beta(b))^c \colon b \in B \subseteq A \}$. Since $\Box_R(\beta(a))^c$ is a compact subset, by Alexander's subbasis Theorem we get that $H_a$ is compact.

Notice that ($S_3$) follows from Remark \ref{basecong} and from the assumption that $\langle X, \K \rangle$ is an $S$-space.

Finally, we prove ($S_4$). Let $Y \in C_\mathcal{M}(\mathcal{F}_R)$ and $\mathcal{J} \subseteq S(\mathcal{F}_R)$ be a $Y$-family such that $Y \cap A^c \neq \emptyset$ for all $A \in \mathcal{J}$. We show that $Y \cap \bigcap \{A^c \colon A \in \mathcal{J} \} \neq \emptyset$. By Lemma \ref{cerradoVietoris} there exists $F \in \Fi(\A)$ such that $Y = \{R(x) \colon R(x) \subseteq \varphi(F)\}$. We consider  $G = \bigcap \{\Box_R(\beta(f)) \colon f \in F \}$ which is a subbasic closed subset of $S(X(\A))$ and $\mathcal{I} = \{\Box_R(\beta(a)) \colon H_{a}^{c} \in \mathcal{J} \}$. 

Let us see that $\mathcal{I}$ is a $G$-family of $S(X)$. Let $H_{a}^{c}, H_{b}^{c} \in \mathcal{J}$ be such that $\Box_R(\beta(a)), \Box_R(\beta(b)) \in \mathcal{I}$. Since $\mathcal{J}$ is a $Y$-family, there exist $d,e \in A$ such that $H_{e}^{c} \in \mathcal{J}$, $Y \subseteq H_{d}^{c}$, $H_{a}^{c} \cap H_{d}^{c} \subseteq H_{e}^{c}$ and $H_{b}^{c} \cap H_{d}^{c} \subseteq H_{e}^{c}$. From $Y \subseteq H_{d}^{c}$ we get that $\bigcap \{\Box_R(\beta(f)) \colon f \in F \} \subseteq \Box_R(\beta(d))$, i.e., $G \subseteq \Box_R(\beta(d))$. Moreover, $\Box_R(\beta(e)) \in \mathcal{I}$. By Remark \ref{basecong}, $\Box_R(\beta(a)) \cap \Box_R(\beta(d)) \subseteq \Box_R(\beta(e))$ and $\Box_R(\beta(b)) \cap \Box_R(\beta(d)) \subseteq \Box_R(\beta(e))$ and we get that $\mathcal{I}$ is a $G$-family. Now we prove $G \cap \bigcap \{A^c \colon A \in \mathcal{I} \} \neq \emptyset$. Suppose that $G \cap \bigcap \{ A^c \colon A \in \mathcal{I} \}= \emptyset$. So, there exists $H_{a}^{c} \in \mathcal{J}$ such that $G \cap \Box_R(\beta(a))^c = \emptyset$. Then $G \subseteq \Box_R(\beta(a))$ and again by Remark \ref{basecong} we have $Y \cap H_a = \emptyset$ which is a contradiction. So, $G \cap \bigcap \{A^c \colon A \in \mathcal{I} \} \neq \emptyset$. 

Suppose now that $Y \cap \bigcap \{A^c \colon A \in \mathcal{J} \} = \emptyset$. Then $Y \subseteq \bigcup \{H_{a}^{c} \colon H_{a}^{c} \in \mathcal{J} \}$. By Remark \ref{basecong} we get $\bigcap \{\Box_R(\beta(f)) \colon f \in F \} \subseteq \bigcup \{ \Box_R(\beta(a)) \colon H_{a}^{c} \in\mathcal{J} \}$, i.e., $G \subseteq \bigcup \{ A \colon A \in \mathcal{I} \}$ which is a contradiction. Thus, $Y \cap \bigcap \{A^c \colon A \in \mathcal{J}\} \neq \emptyset$. Therefore, $\langle \mathcal{F}_R, \mathcal{M} \rangle$ is an $S$-space as claimed.
\end{proof}

\begin{corollary} \label{homomorphic space}
Let $\A$ and $\mathbf{B}$ be two semilattices and $h \colon \A \rightarrow \mathbf{B}$ be an onto homomorphism. Then $\langle \mathcal{F}_{R_h}, \mathcal{M} \rangle$ is an $S$-space which is homeomorphic to $\langle X(\mathbf{B}), \mathcal{K}_{\mathbf{B}}\rangle$.
\end{corollary}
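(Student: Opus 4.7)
The plan is to exhibit the map $\Phi \colon X(\mathbf{B}) \to \mathcal{F}_{R_h}$ given by $\Phi(P) = R_h(P)$ as the desired homeomorphism of $S$-spaces. As a preliminary observation, the corollary preceding Proposition \ref{subbasis lower type Vietoris} yields that $R_h$ is one-to-one because $h$ is onto, so Theorem \ref{theo-F space} already guarantees that $\langle \mathcal{F}_{R_h}, \mathcal{M} \rangle$ is an $S$-space. Surjectivity of $\Phi$ is immediate from the definition of $\mathcal{F}_{R_h}$.

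For injectivity, I would exploit the identity $\beta_{\mathbf{B}} h = \Box_{R_h} \beta_{\A}$ together with the surjectivity of $h$. Concretely, if $P_1 \neq P_2$ in $X(\mathbf{B})$, one picks $b$ in their symmetric difference and writes $b = h(a)$ for some $a \in A$; then $P_i \in \beta_{\mathbf{B}}(h(a))$ if and only if $R_h(P_i) \subseteq \beta_{\A}(a)$, and the latter holds for exactly one $i$, forcing $R_h(P_1) \neq R_h(P_2)$.

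The key step is to show that $\Phi$ transports the designated subbasis of $\langle X(\mathbf{B}), \mathcal{K}_{\mathbf{B}} \rangle$ onto $\mathcal{M}$. By Remark \ref{basecong}, for every $a \in A$ and every $P \in X(\mathbf{B})$, the equivalence $\Phi(P) \in H_a$ iff $P \in \Box_{R_h}(\beta_{\A}(a))^c = \beta_{\mathbf{B}}(h(a))^c$ holds, so $\Phi^{-1}[H_a] = \beta_{\mathbf{B}}(h(a))^c$, and by bijectivity $\Phi[\beta_{\mathbf{B}}(h(a))^c] = H_a$. Invoking once more that $h$ is onto gives $\mathcal{K}_{\mathbf{B}} = \{\beta_{\mathbf{B}}(h(a))^c \colon a \in A\}$, and thus $\{\Phi[V] \colon V \in \mathcal{K}_{\mathbf{B}}\} = \mathcal{M}$. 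Together with bijectivity of $\Phi$, this is exactly what is needed (in the sense used in Lemma \ref{HX} and Corollary \ref{homeomorphisms induced by mS-spaces}) to conclude that $\Phi$ is a homeomorphism of $S$-spaces.

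The main obstacle here is bookkeeping rather than a substantive difficulty: one has to keep straight the complementation involved in passing between $\beta_{\mathbf{B}}(h(a))$ and its subbasic open counterpart $\beta_{\mathbf{B}}(h(a))^c$, and to recognize that it is precisely the surjectivity of $h$ that guarantees every subbasic open of $X(\mathbf{B})$ appears as $\Phi^{-1}[H_a]$ for some $a \in A$; without it, $\Phi$ would still be a continuous injection of topological spaces, but would not in general carry $\mathcal{K}_{\mathbf{B}}$ onto $\mathcal{M}$.
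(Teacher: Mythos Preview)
Your proposal is correct and follows essentially the same route as the paper: the map $\Phi(P)=R_h(P)$ is exactly the map $\lambda$ the paper later names just after Corollary~\ref{cormon}, and the paper's deferred argument (Theorem~\ref{theo-F space} plus the adaptation of \cite[Lemma~40]{CelaniCalomino}) amounts precisely to your bijectivity check together with the identity $\mathcal{M}=\{\lambda[V]\colon V\in\mathcal{K}_{\mathbf{B}}\}$, which the paper explicitly invokes in the proof of Theorem~\ref{induced homeo of mS}.
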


\begin{proof}
It follows from Theorem \ref{theo-F space} and an adaptation of the proof of Lemma 40 of \cite{CelaniCalomino}. 
\end{proof}

\begin{lemma}
Let $\langle X, \K \rangle$ be an $S$-space and $\mathcal{F}$ be a family of non-empty subbasic closed subsets of $\langle X, \K \rangle$. Then the family $\mathcal{M}_{\mathcal{F}}$ is a subbasis for a topology on $\mathcal{F}$.
\end{lemma}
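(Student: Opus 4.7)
The plan is essentially a one-line verification, since being a subbasis for a topology on $\mathcal{F}$ requires (in the convention used by the authors, as evidenced by condition $(S_2)$ of Definition \ref{S-spaces} and the proof of Proposition \ref{subbasis lower type Vietoris}) only that the collection cover the underlying set. So the goal reduces to showing $\bigcup \mathcal{M}_{\mathcal{F}} = \mathcal{F}$, after which the topology generated by $\mathcal{M}_{\mathcal{F}}$ is well defined in the standard way, with basis given by finite intersections of elements of $\mathcal{M}_{\mathcal{F}}$.

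First, I would note that by the very definition $\mathcal{M}_{\mathcal{F}} = \{ U^{-}_{\mathcal{F}} \colon U \subseteq X\}$, the full ambient set $X$ is an admissible choice of $U$, so that $X^{-}_{\mathcal{F}} \in \mathcal{M}_{\mathcal{F}}$. Next, I would compute $X^{-}_{\mathcal{F}} = \{Y \in \mathcal{F} \colon Y \cap X \neq \emptyset\} = \{Y \in \mathcal{F} \colon Y \neq \emptyset\}$, and then invoke the standing hypothesis that every element of $\mathcal{F}$ is a \emph{non-empty} subbasic closed subset of $\langle X,\mathcal{K}\rangle$. This hypothesis yields $X^{-}_{\mathcal{F}} = \mathcal{F}$, and therefore $\mathcal{F} \in \mathcal{M}_{\mathcal{F}}$, which gives $\bigcup \mathcal{M}_{\mathcal{F}} = \mathcal{F}$ at once.

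There is no genuine obstacle in this proof: the entire content of the lemma is the observation that the non-emptiness assumption on the members of $\mathcal{F}$ is precisely what is needed to guarantee that the candidate subbasis actually covers $\mathcal{F}$. This is exactly the role played by the verification $R(x) \neq \emptyset$ in part 1 of Proposition \ref{subbasis lower type Vietoris}, which specializes the present lemma to the family $\mathcal{F}_R$; here the condition is built into the hypothesis rather than derived, and the proof accordingly becomes a direct check.
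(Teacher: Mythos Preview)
Your argument is correct for the statement as literally written: at this point in the paper the only available definition of $\mathcal{M}_{\mathcal{F}}$ is the Ivanova--Dimova one, $\mathcal{M}_{\mathcal{F}}=\{U^{-}_{\mathcal{F}}\colon U\subseteq X\}$, so taking $U=X$ and using non-emptiness of the members of $\mathcal{F}$ gives $X^{-}_{\mathcal{F}}=\mathcal{F}\in\mathcal{M}_{\mathcal{F}}$ immediately.

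The paper, however, argues differently and proves something slightly stronger. It fixes $Y\in\mathcal{F}$, picks $x\in Y$ (using non-emptiness), then uses that $\mathcal{K}$ is a subbase with $X=\bigcup\mathcal{K}$ to find $U\in\mathcal{K}$ with $x\in U$, concluding $Y\in U^{-}_{\mathcal{F}}$ and hence $\mathcal{F}=\bigcup\{U^{-}_{\mathcal{F}}\colon U\in\mathcal{K}\}$. The point is that this establishes the cover property for the \emph{restricted} family $\{U^{-}_{\mathcal{F}}\colon U\in\mathcal{K}\}$, which is exactly the family used in Definition~\ref{lower-Vietoris-type family} immediately afterwards (where $\mathcal{M}_{\mathcal{F}}$ is redefined with $U\in\mathcal{K}$). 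Your shortcut via $U=X$ does not yield this, since $X$ need not belong to $\mathcal{K}$ in an $S$-space. So while your proof settles the lemma as stated, the paper's argument is the one actually needed downstream; if you want your proof to serve the same purpose, you should replace the choice $U=X$ by the paper's use of condition $(S_1)$ to obtain $U\in\mathcal{K}$.
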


\begin{proof}
Let $Y \in\mathcal{F}$. Then there exists $x \in X$ such that $x \in Y$. Since $\K$ is a subbase, there exists $U \in \K$ such that $x \in U$. Then $Y \in U^-_\mathcal{F}$ and $\mathcal{F} = \bigcup \{ U^-_\mathcal{F} \colon U\in \K \}$.
\end{proof}

\begin{definition}\label{lower-Vietoris-type family}
Let $\langle X, \K \rangle$ be an $S$-space. A family $\mathcal{F}$ of non-empty subbasic closed subsets of $\langle X, \K \rangle$ is a \emph{lower-Vietoris-type family} if the pair $\langle \mathcal{F},\mathcal{M}_\mathcal{F} \rangle$ is an $S$-space where $\mathcal{M}_{\mathcal{F}} = \{U^-_\mathcal{F} \colon U \in \K\}$. 
\end{definition}

We write $\mathcal{V}(X)$ for the set of lower-Vietoris-type families of $\langle X, \K \rangle$.

\begin{remark}
If $\mathcal{F}$ is a lower-Vietoris-type family of an $S$-space $\langle X, \K \rangle$, then $\mathbf{S}(\mathcal{F}) = \langle S(\mathcal{F}), \cap, \mathcal{F} \rangle$ is a semilattice where $S(\mathcal{F}) = \{ (U^-_\mathcal{F})^{c} \colon U \in \mathcal{K} \}$.
\end{remark}

Let $\A$ be a semilattice. If $\mathcal{F} \subseteq C_\K(X(\A))$ is a lower-Vietoris-type family of the dual $S$-space $\langle X(\A), \mathcal{K}_{\A} \rangle$, then we can define a relation $R_\mathcal{F} \subseteq \mathcal{F} \times X(\A)$ by
\begin{equation*}
(Y, P) \in R_\mathcal{F} \Longleftrightarrow P \in Y.
\end{equation*}

We stress that the proofs of the following three results can be performed in the same way that the proofs of Lemmas 42, 43 and Theorem 44 of \cite{CelaniCalomino}. Therefore we omit them.  

\begin{lemma}
Let $\A$ be a semilattice and $\mathcal{F} \subseteq C_\K(X(\A))$ be a lower-Vietoris-type family of the dual $S$-space of $\A$. Then $R_\mathcal{F} \subseteq \mathcal{F} \times X(\A)$ is a one-to-one meet-relation.
\end{lemma}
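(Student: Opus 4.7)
The plan is to verify directly the three defining conditions, namely the two clauses defining a meet-relation plus the one-to-one clause. The crucial preliminary step, which makes everything transparent, is to observe that the fiber of $R_\mathcal{F}$ at $Y$ is simply $R_\mathcal{F}(Y) = \{P \in X(\A) \colon P \in Y\} = Y$. Once this identification is in place, everything about $R_\mathcal{F}$ translates into elementary statements relating the subbasic closed sets of $\langle X(\A),\mathcal{K}_\A\rangle$ on one side with the lower-Vietoris-type subbase $\mathcal{M}_\mathcal{F}$ on the other.

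For clause (1) of the meet-relation definition, I would take an arbitrary $V \in S(X(\A))$ and use $S(X(\A)) = \beta[A]$ to write $V = \beta(a)$; then $\mathcal{F}(R_\mathcal{F})(V) = \{Y \in \mathcal{F} \colon Y \subseteq \beta(a)\}$. Unfolding the definition of $S(\mathcal{F})$ from $\mathcal{K}_\A = \{\beta(a)^c \colon a \in A\}$, this set coincides with $((\beta(a)^c)^-_\mathcal{F})^c$, hence lies in $S(\mathcal{F})$. For clause (2), one must show $Y = \bigcap\{V \in S(X(\A)) \colon R_\mathcal{F}(Y) \subseteq V\}$, which under the identification $R_\mathcal{F}(Y) = Y$ reduces to $Y = \bigcap\{\beta(a) \colon Y \subseteq \beta(a)\}$. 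This is immediate from the hypothesis $\mathcal{F} \subseteq C_{\mathcal{K}_\A}(X(\A))$ together with Proposition \ref{prop_subclos and filters}, which provides a filter $F \in \Fi(\A)$ with $Y = \bigcap\beta[F]$.

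For the one-to-one property I would argue as follows: given $Y \in \mathcal{F}$ and $V \in S(\mathcal{F})$ with $Y \notin V$, the description of $S(\mathcal{F})$ just used gives $V = \{Y' \in \mathcal{F} \colon Y' \subseteq \beta(a)\}$ for some $a \in A$, so in particular $Y \not\subseteq \beta(a)$. Choosing $U = \beta(a) \in S(X(\A))$, the computation of $\Box_{R_\mathcal{F}}$ from clause (1) shows $\Box_{R_\mathcal{F}}(U) = V$, so trivially $V \subseteq \Box_{R_\mathcal{F}}(U)$, while $Y \notin \Box_{R_\mathcal{F}}(U)$ because $R_\mathcal{F}(Y) = Y \not\subseteq \beta(a)$.

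I do not anticipate any substantive obstacle: the whole argument is a matter of matching the lower-Vietoris-type description of $S(\mathcal{F})$ against the intersection-of-$\beta(a)$'s description of the elements of $\mathcal{F}$. The only point where some external input is genuinely needed is clause (2) of the meet-relation definition, and there Proposition \ref{prop_subclos and filters} does the work at once; no dually directed families, no invocation of axiom \ref{S4}, and no separation argument appear to be required.
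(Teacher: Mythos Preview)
Your proposal is correct and is precisely the natural direct verification one would expect. The paper itself omits the proof entirely, simply remarking that it ``can be performed in the same way'' as Lemma~42 of \cite{CelaniCalomino}, so there is no alternative approach to compare against; your argument---identifying $R_\mathcal{F}(Y)=Y$, computing $\Box_{R_\mathcal{F}}(\beta(a)) = ((\beta(a)^c)^-_\mathcal{F})^c \in S(\mathcal{F})$, invoking $Y\in C_{\mathcal{K}_\A}(X(\A))$ for clause~(2), and observing that the one-to-one condition is witnessed by $U=\beta(a)$ with $\Box_{R_\mathcal{F}}(U)=V$---is exactly what that deferred proof amounts to.
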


\begin{lemma} Let $\A$ be a semilattice and $\langle X, \K \rangle$ be an $S$-space. Then:
\begin{enumerate}
\item If $R \subseteq X \times X(\A)$ is a one-to-one meet-relation, then for every $x \in X$ and $P \in X(\A)$ we have 
\begin{equation*}
(x, P) \in R \Longleftrightarrow (R(x), P) \in R_{\mathcal{F}_R}.
\end{equation*}
\item If $\mathcal{F} \subseteq C_\K(X(\A))$ is a lower-Vietoris-type family of the dual $S$-space of $\A$, then $\mathcal{F} = \mathcal{F}_{R_\mathcal{F}}$.
\end{enumerate}
\end{lemma}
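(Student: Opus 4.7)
The plan is to verify each item by unfolding the definitions, since both assertions are essentially tautological once the defining clauses are spelled out.

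For item 1, I would start from the defining clause of $R_{\mathcal{F}_R}$: for any $Z\in\mathcal{F}_R$ and $P\in X(\A)$, $(Z,P)\in R_{\mathcal{F}_R}$ if and only if $P\in Z$. Taking $Z=R(x)$, which belongs to $\mathcal{F}_R$ by definition of $\mathcal{F}_R$, this says $(R(x),P)\in R_{\mathcal{F}_R}$ if and only if $P\in R(x)$. Since by definition $P\in R(x)$ is the same as $(x,P)\in R$, I would conclude the chain of equivalences
\begin{equation*}
(x,P)\in R \Longleftrightarrow P\in R(x) \Longleftrightarrow (R(x),P)\in R_{\mathcal{F}_R}.
\end{equation*}

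For item 2, I would compute directly: by the definition of $\mathcal{F}_R$ applied to the relation $R_{\mathcal{F}}$, we have $\mathcal{F}_{R_{\mathcal{F}}}=\{R_{\mathcal{F}}(Y)\colon Y\in\mathcal{F}\}$. For any fixed $Y\in\mathcal{F}\subseteq C_{\K}(X(\A))$, the definition of $R_{\mathcal{F}}$ gives
\begin{equation*}
R_{\mathcal{F}}(Y)=\{P\in X(\A)\colon (Y,P)\in R_{\mathcal{F}}\}=\{P\in X(\A)\colon P\in Y\}=Y,
\end{equation*}
where the last equality uses $Y\subseteq X(\A)$. Therefore $\mathcal{F}_{R_{\mathcal{F}}}=\{Y\colon Y\in\mathcal{F}\}=\mathcal{F}$.

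I do not anticipate any real obstacle: both statements reduce to a direct manipulation of the defining membership conditions, and no appeal to the more substantive features (one-to-oneness of $R$ in item 1, or the $S$-space structure of $\langle \mathcal{F},\mathcal{M}_{\mathcal{F}}\rangle$ in item 2) is needed for the equalities themselves. Those hypotheses are only used implicitly to guarantee that the objects $\mathcal{F}_R$ and $R_{\mathcal{F}}$ are well-defined in the sense of the preceding lemmas, so that the equalities live in the intended categories.
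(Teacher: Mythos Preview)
Your proof is correct: both items are indeed immediate from the defining clauses of $R_{\mathcal{F}}$ and $\mathcal{F}_R$, and your unfolding is exactly the expected argument. The paper itself omits the proof, pointing instead to Lemma~43 of \cite{CelaniCalomino} for the distributive case, where the same direct verification is carried out; so your approach coincides with what the authors have in mind.
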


\begin{theorem}\label{homomorphic Vietoris}
Let $\A$ be a semilattice. Then the homomorphic images of $\A$ are characterized by lower-Vietoris-type families of the dual $S$-space of $\A$.
\end{theorem}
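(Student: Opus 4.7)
The plan is to assemble the preceding material into a bijective correspondence between (isomorphism classes of) surjective homomorphisms with domain $\A$ and lower-Vietoris-type families of the dual $S$-space $\langle X(\A),\mathcal{K}_\A\rangle$, using one-to-one meet-relations as the bridge. Concretely, I would define the assignments $h\mapsto\mathcal{F}_{R_h}$ in one direction and $\mathcal{F}\mapsto\Box_{R_\mathcal{F}}\circ\beta$ in the other, and check that each undoes the other.

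For the first direction, given a surjective homomorphism $h\colon\A\twoheadrightarrow\mathbf{B}$, the corollary to Theorem \ref{onto one-to-one} guarantees that $R_h\subseteq X(\mathbf{B})\times X(\A)$ is a one-to-one meet-relation, so Theorem \ref{theo-F space} renders $\langle\mathcal{F}_{R_h},\mathcal{M}\rangle$ an $S$-space. To show $\mathcal{F}_{R_h}\in\mathcal{V}(X(\A))$, I would invoke Remark \ref{basecong} to identify $H_a$ with $\beta(a)^-_{\mathcal{F}_{R_h}}$, so that $\mathcal{M}=\mathcal{M}_{\mathcal{F}_{R_h}}$ and Definition \ref{lower-Vietoris-type family} is matched exactly; Corollary \ref{homomorphic space} then gives $\mathbf{B}\cong\mathbf{S}(\mathcal{F}_{R_h})$. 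In the reverse direction, from a lower-Vietoris-type family $\mathcal{F}$ the preceding lemma produces a one-to-one meet-relation $R_\mathcal{F}\subseteq\mathcal{F}\times X(\A)$; Theorem \ref{onto one-to-one} then yields that $\Box_{R_\mathcal{F}}\colon S(X(\A))\twoheadrightarrow S(\mathcal{F})$ is a surjective homomorphism, and precomposing with the isomorphism $\beta\colon\A\to S(X(\A))$ produces the desired homomorphic image of $\A$.

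It remains to verify that the two assignments are mutually inverse, and here the two preceding lemmas furnish the ingredients: part (2) gives $\mathcal{F}=\mathcal{F}_{R_\mathcal{F}}$, recovering a family from its relation, while part (1) provides the equivalence $(x,P)\in R\Longleftrightarrow(R(x),P)\in R_{\mathcal{F}_R}$, recovering (up to the homeomorphism of Corollary \ref{homomorphic space}) a meet-relation from its family of fibres. I expect the main obstacle to be the bookkeeping around the equivalence under which homomorphic images are compared: a homomorphic image is only determined up to isomorphism over $\A$, whereas a lower-Vietoris-type family is a bona fide subfamily of $C_{\mathcal{K}_\A}(X(\A))$. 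The reconciliation is that the family $\mathcal{F}_{R_h}$ depends only on the image $\Box_{R_h}[S(X(\mathbf{B}))]$ inside $S(X(\A))$, which under $\beta^{-1}$ corresponds to the kernel congruence of $h$ on $\A$; hence equivalent surjective homomorphisms yield the same lower-Vietoris-type family, and the correspondence is in fact a bijection between congruences of $\A$ and elements of $\mathcal{V}(X(\A))$, which is exactly what the statement claims.
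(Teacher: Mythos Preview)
Your proposal is correct and follows exactly the approach the paper intends: the paper omits this proof, referring to Theorem~44 of \cite{CelaniCalomino}, and your outline---building the bijection via one-to-one meet-relations using Theorem~\ref{theo-F space}, Corollary~\ref{homomorphic space}, and the two preceding lemmas on $R_\mathcal{F}$ and $\mathcal{F}_R$---is precisely that argument adapted to $S$-spaces. One minor notational slip: $H_a$ equals $(\beta(a)^c)^-_{\mathcal{F}_{R_h}}$, not $\beta(a)^-_{\mathcal{F}_{R_h}}$, since $\mathcal{K}_\A=\{\beta(a)^c:a\in A\}$; with that correction your identification $\mathcal{M}=\mathcal{M}_{\mathcal{F}_{R_h}}$ goes through.
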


Since the class of semilattices is a variety, then by general facts from universal algebra (see II.10.9 in \cite{Sanka}) it is known that every semilattice is the homomorphic image of a freely generated free semilattice. This observation allows us to obtain the following theorem.

\begin{theorem} \label{Representation Vietoris}
Every semilattice can be represented as an $S$-space associated to a lower-Vietoris-type family.
\end{theorem}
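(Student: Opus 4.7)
The plan is to combine the hint stated just before the theorem with the characterization provided by Theorem \ref{homomorphic Vietoris}. Fix a semilattice $\A$. Since semilattices form a variety, by II.10.9 of \cite{Sanka} there exist a free semilattice $\mathbf{F}$ (say, generated by the underlying set of $A$) together with a surjective homomorphism $h \colon \mathbf{F} \rightarrow \A$. I will use $h$ to produce a lower-Vietoris-type family on the dual $S$-space of $\mathbf{F}$ whose associated $S$-space has dual semilattice isomorphic to $\A$.

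Consider the meet-relation $R_{h} \subseteq X(\A) \times X(\mathbf{F})$ given by $(P,Q) \in R_{h}$ if and only if $h^{-1}[P] \subseteq Q$. Since $h$ is onto, the corollary after Theorem \ref{onto one-to-one} ensures that $R_{h}$ is one-to-one. Hence, taking the ambient $S$-space to be $\langle X(\A), \K_{\A}\rangle$ and the target semilattice to be $\mathbf{F}$, Proposition \ref{subbasis lower type Vietoris} and Theorem \ref{theo-F space} show that the family
\begin{equation*}
\mathcal{F}_{R_{h}} = \{R_{h}(P) \colon P \in X(\A)\}
\end{equation*}
consists of non-empty subbasic closed subsets of $X(\mathbf{F})$ and that $\langle \mathcal{F}_{R_{h}}, \mathcal{M}_{\mathcal{F}_{R_{h}}} \rangle$ is an $S$-space. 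By Definition \ref{lower-Vietoris-type family} this means that $\mathcal{F}_{R_{h}}$ is a lower-Vietoris-type family of the dual $S$-space $\langle X(\mathbf{F}), \K_{\mathbf{F}}\rangle$ of $\mathbf{F}$.

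To conclude, Corollary \ref{homomorphic space} identifies $\langle \mathcal{F}_{R_{h}}, \mathcal{M}_{\mathcal{F}_{R_{h}}}\rangle$ with $\langle X(\A), \K_{\A}\rangle$ via a homeomorphism of $S$-spaces, and passing to dual semilattices yields $\A \cong \mathbf{S}(X(\A)) \cong \mathbf{S}(\mathcal{F}_{R_{h}})$, which is exactly the desired representation. No real obstacle arises: all the technical work has been carried out in Theorems \ref{theo-F space} and \ref{homomorphic Vietoris} and in Corollary \ref{homomorphic space}, and the present statement amounts to observing that the surjection from a free semilattice furnished by universal algebra feeds directly into that machinery.
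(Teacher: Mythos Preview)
Your proposal is correct and follows precisely the approach the paper intends: the paragraph immediately preceding Theorem \ref{Representation Vietoris} already records that every semilattice is a homomorphic image of a free semilattice, and the theorem is then meant to follow at once from Theorem \ref{homomorphic Vietoris} and Corollary \ref{homomorphic space}. You have simply made explicit the details the paper leaves to the reader, correctly instantiating Theorem \ref{theo-F space} and Corollary \ref{homomorphic space} with $h\colon \mathbf{F}\to\A$ so that $R_h\subseteq X(\A)\times X(\mathbf{F})$ yields a lower-Vietoris-type family on $X(\mathbf{F})$ whose associated $S$-space is homeomorphic to $X(\A)$.
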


We emphasize that it is also the case that in the class of semilattices, homomorphic images are in correspondence with congruences. In what follows, we present an explicit characterization of congruences of semilattices as lower-Vietoris-type families: let $\A$ be a semilattice and $\theta \subseteq A \times A$ be a congruence on $\A$. Let us consider the natural homomorphism $q_\theta \colon A \to A/\theta$ assigning to every $a \in A$ the equivalence class $q_\theta(a) = a/\theta$. Since $q_\theta$ is onto, by Theorem \ref{onto one-to-one} we have that $R_{q_\theta} \subseteq X(\A/\theta)\times X(\A)$ is a meet-relation one-to-one. Then
\begin{equation*}
\mathcal{F}_{R_{q_\theta}} = \{R_{q_\theta}(Q) \colon Q\in X(\A/\theta)\} \subseteq C_\K(X(\A))
\end{equation*}
is a lower-Vietoris-type family of the dual $S$-space $\langle X(\A), \mathcal{K}_{\A} \rangle$, i.e., the structure $\langle \mathcal{F}_{R_{q_\theta}}, \mathcal{M}\rangle$ is an $S$-space. 

\begin{theorem} \label{congruences semilattices}
Let $\A$ be a semilattice and $\mathcal{F} \subseteq C_\K(X(\A))$ be a lower-Vietoris-type family of the dual $S$-space of $\A$. Consider the relation
\begin{equation} \label{equa congruence F}
\theta_\mathcal{F} = \{(a,b) \in A^2 \colon [\beta(a)^{c}]^{-}_\mathcal{F} = [\beta(b)^{c}]^{-}_\mathcal{F} \}.
\end{equation}
\begin{enumerate}
\item Then $\theta_\mathcal{F}$ is a congruence on $\A$. In particular, we have that $\theta_\mathcal{F}$ is the kernel of the homomorphism $\Box_{R_\mathcal{F}} \beta\colon A \rightarrow S(\mathcal{F})$.
\item Then $Y \in \mathcal{F}$ if and only if there exists $Q \in X(\A/\theta_\mathcal{F})$ such that $Y = R_{{q_{\theta_\mathcal{F}}}}(Q)$. Moreover, $\mathcal{F}=\mathcal{F}_{R_{{q_{\theta_\mathcal{F}}}}}$.
\end{enumerate}
\end{theorem}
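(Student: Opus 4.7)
The plan is to split the argument into the two items, treating (1) as identifying $\theta_\mathcal{F}$ as the kernel of a suitable semilattice homomorphism and (2) as an unpacking of the $S$-space duality developed in the previous sections. For (1), I would first compute $\Box_{R_\mathcal{F}}\beta(a)$ explicitly. Since $R_\mathcal{F}(Y) = \{P \in X(\A) : P \in Y\} = Y$ for every $Y \in \mathcal{F}$, one obtains
$$\Box_{R_\mathcal{F}}(\beta(a)) = \{Y \in \mathcal{F} : Y \subseteq \beta(a)\} = \mathcal{F} \setminus [\beta(a)^c]^-_\mathcal{F},$$
i.e.\ precisely the complement in $\mathcal{F}$ of $[\beta(a)^c]^-_\mathcal{F}$. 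Hence $(a,b) \in \theta_\mathcal{F}$ if and only if $\Box_{R_\mathcal{F}}\beta(a) = \Box_{R_\mathcal{F}}\beta(b)$; equivalently, $\theta_\mathcal{F}$ is exactly the kernel of the semilattice homomorphism $\Box_{R_\mathcal{F}}\beta \colon A \to S(\mathcal{F})$, which immediately gives that $\theta_\mathcal{F}$ is a congruence on $\A$.

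For (2), the idea is to show that $X(A/\theta_\mathcal{F})$ is in bijection with $\mathcal{F}$ via the map $Q \mapsto R_{q_{\theta_\mathcal{F}}}(Q)$. Since the preceding Lemma gives that $R_\mathcal{F}$ is one-to-one, by Theorem \ref{onto one-to-one} the homomorphism $\Box_{R_\mathcal{F}}$ is surjective, and so is the composition $\Box_{R_\mathcal{F}}\beta$. By the first isomorphism theorem for semilattices, the assignment $a/\theta_\mathcal{F} \mapsto \Box_{R_\mathcal{F}}\beta(a)$ is a well-defined isomorphism $\iota \colon A/\theta_\mathcal{F} \to \mathbf{S}(\mathcal{F})$. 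Applying the $S$-space duality to $\iota$ and composing its inverse with the canonical homeomorphism $H_\mathcal{F} \colon \mathcal{F} \to X(\mathbf{S}(\mathcal{F}))$ from \cite{Celani2020}, I obtain a bijection $\Phi \colon X(A/\theta_\mathcal{F}) \to \mathcal{F}$.

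The main step is then to verify explicitly that $\Phi(Q) = R_{q_{\theta_\mathcal{F}}}(Q)$ for every $Q \in X(A/\theta_\mathcal{F})$. By unraveling the definitions, $\Phi(Q)$ is the unique $Y \in \mathcal{F}$ such that for every $a \in A$, $Y \in \Box_{R_\mathcal{F}}\beta(a)$ if and only if $a/\theta_\mathcal{F} \in Q$. Using the identity $\Box_{R_\mathcal{F}}\beta(a) = \{Z \in \mathcal{F} : Z \subseteq \beta(a)\}$ from above, this condition unpacks as $Y \subseteq \beta(a)$ if and only if $a \in q_{\theta_\mathcal{F}}^{-1}[Q]$, which means exactly $\psi(Y) = q_{\theta_\mathcal{F}}^{-1}[Q]$. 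By Proposition \ref{prop_subclos and filters}, this forces $Y = \varphi(q_{\theta_\mathcal{F}}^{-1}[Q]) = R_{q_{\theta_\mathcal{F}}}(Q)$. Both implications in item (2) and the moreover identity $\mathcal{F} = \mathcal{F}_{R_{q_{\theta_\mathcal{F}}}}$ then follow from the bijectivity of $\Phi$. The main obstacle, in my view, lies in this last identification, which requires careful bookkeeping through the duality; however, once the isomorphism $\iota$ is in hand the argument reduces to a direct unpacking of the correspondence between subbasic closed subsets and filters recorded in Proposition \ref{prop_subclos and filters}.
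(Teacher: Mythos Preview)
Your proposal is correct and uses essentially the same ingredients as the paper: the identification of $\theta_{\mathcal{F}}$ as the kernel of $\Box_{R_{\mathcal{F}}}\beta$, the isomorphism $A/\theta_{\mathcal{F}}\cong\mathbf{S}(\mathcal{F})$ (your $\iota$, the paper's $\delta$), and the homeomorphism $H_{\mathcal{F}}$.

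The organization differs slightly. For item (2) the paper treats the two directions separately: given $Y\in\mathcal{F}$ it explicitly sets $Q=q_{\theta_{\mathcal{F}}}[\psi(Y)]$ and appeals to condition \ref{S3} of the $S$-space $\langle\mathcal{F},\mathcal{M}\rangle$ to check that $Q$ is irreducible, then verifies $R_{q_{\theta_{\mathcal{F}}}}(Q)=Y$ by hand; for the converse it invokes $H_{\mathcal{F}}$ and $\delta$ to recover $Y=H_{\mathcal{F}}^{-1}[\delta(Q)]\in\mathcal{F}$. You instead invoke the first isomorphism theorem up front to obtain $\iota$, pass to the induced bijection $\Phi$ on dual spaces, and then identify $\Phi(Q)$ with $R_{q_{\theta_{\mathcal{F}}}}(Q)$ via Proposition~\ref{prop_subclos and filters}. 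Your packaging is more uniform and sidesteps the direct irreducibility check (which is absorbed into the functoriality of the duality), while the paper's version makes the candidate $Q$ for a given $Y$ more visible. Both routes rest on the same correspondence $\psi(Y)=q_{\theta_{\mathcal{F}}}^{-1}[Q]$ and are equally valid.
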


\begin{proof}
1. It is routine. 

2. Let $Y \in \mathcal{F}$ and let $\psi$ be the map defined in (\ref{definition of psi}). Let us consider the set $Q =q_{\theta_\mathcal{F}}[\psi(Y)]$. Then $q_{\theta_\mathcal{F}}(a) \in Q$ if and only if $a \in \psi (Y)$. It follows that $Q \in \Fi(\A/\theta_\mathcal{F})$ and since $\mathcal{F}$ is a lower-Vietoris-type family of $\langle X(\A), \K_\A \rangle$, we have that $\langle \mathcal{F}, \mathcal{M} \rangle$ is an $S$-space and satisfies condition \ref{S3}. Then it follows $Q \in X(\A/\theta_\mathcal{F})$ and $P \in R_{q_{\theta_\mathcal{F}}}(Q)$ if and only if $\psi(Y) \subseteq P$. Then $P \in Y$ and we get that $R_{q_{\theta_\mathcal{F}}}(Q)=Y$. Conversely, suppose there exists $Q \in X(\A/\theta_\mathcal{F})$ such that $Y = R_{{q_{\theta_\mathcal{F}}}}(Q)$. We consider the homeomorphism $H_\mathcal{F} \colon \mathcal{F}\to X(\mathbf{S}(\mathcal{F}))$ between $S$-spaces and let $\delta \colon \A/\theta_\mathcal{F} \to \mathbf{S}(\mathcal{F})$ be the isomorphism between semilattices given by $\delta(a/\theta_\mathcal{F}) = \{Y \in \mathcal{F} \colon Y \cap \beta(a)^c = \emptyset\}$. Therefore, $Y = H_\mathcal{F}^{-1}[\delta(Q)] \in \mathcal{F}$.
\end{proof}

\begin{theorem} \label{congruences semilattices 2}
Let $\A$ be a semilattice and $\theta$ be a congruence on $\A$. Then $\theta = \theta_{\mathcal{F}_{R_{q_{\theta}}}}$.
\end{theorem}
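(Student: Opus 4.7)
The plan is to unfold the definition of $\theta_{\mathcal{F}_{R_{q_\theta}}}$ and reduce it, through the separation theorem for semilattices and the injectivity of the canonical embedding, to the condition $a/\theta = b/\theta$, which is precisely $(a,b) \in \theta$.

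First, I would recall the concrete shape of $\mathcal{F}_{R_{q_\theta}}$: by definition of $R_{q_\theta}$, each of its elements is $R_{q_\theta}(Q) = \{P \in X(\mathbf{A}) \colon q_\theta^{-1}[Q] \subseteq P\}$ for some $Q \in X(\mathbf{A}/\theta)$. Then I would unfold $(a,b) \in \theta_{\mathcal{F}_{R_{q_\theta}}}$ via (\ref{equa congruence F}) to the statement that for every $Q \in X(\mathbf{A}/\theta)$,
\begin{equation*}
R_{q_\theta}(Q) \cap \beta(a)^{c} \neq \emptyset \iff R_{q_\theta}(Q) \cap \beta(b)^{c} \neq \emptyset.
\end{equation*}

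The key step is to translate the right-hand side. Note that $q_\theta^{-1}[Q] \in \Fi(\mathbf{A})$ since $q_\theta$ is a homomorphism. For any $c \in A$, I claim that $R_{q_\theta}(Q) \cap \beta(c)^{c} \neq \emptyset$ if and only if $c \notin q_\theta^{-1}[Q]$. The forward direction is trivial since the filter $q_\theta^{-1}[Q]$ is contained in some $P$ with $c \notin P$. For the converse, if $c \notin q_\theta^{-1}[Q]$, then applying Theorem \ref{separacion} to the filter $q_\theta^{-1}[Q]$ and the principal order-ideal $(c]$ (which are disjoint by assumption) yields a witness $P \in X(\mathbf{A})$ with $q_\theta^{-1}[Q] \subseteq P$ and $c \notin P$, i.e., $P \in R_{q_\theta}(Q) \cap \beta(c)^{c}$. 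Since $c \notin q_\theta^{-1}[Q]$ is equivalent to $c/\theta \notin Q$, the claim becomes: $(a,b) \in \theta_{\mathcal{F}_{R_{q_\theta}}}$ iff for every $Q \in X(\mathbf{A}/\theta)$, $a/\theta \in Q \Leftrightarrow b/\theta \in Q$, i.e., $\beta_{\mathbf{A}/\theta}(a/\theta) = \beta_{\mathbf{A}/\theta}(b/\theta)$.

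Finally, since $\beta_{\mathbf{A}/\theta}$ is an embedding (as recalled in the preliminaries), this amounts to $a/\theta = b/\theta$, which by definition of $q_\theta$ is exactly $(a,b) \in \theta$. I do not anticipate a genuine obstacle here; the only care required is in the use of Theorem \ref{separacion} to certify the existence of the separating irreducible filter, since without it the equivalence $R_{q_\theta}(Q) \cap \beta(c)^{c} \neq \emptyset \Leftrightarrow c/\theta \notin Q$ would not be guaranteed.
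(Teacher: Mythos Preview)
Your proof is correct and follows essentially the same approach as the paper's: both hinge on Theorem \ref{separacion} to show that $R_{q_\theta}(Q)\cap\beta(c)^c\neq\emptyset$ is equivalent to $c/\theta\notin Q$, and then reduce the question to the injectivity of $\beta_{\mathbf{A}/\theta}$. The only difference is organizational: you isolate this equivalence as a standalone claim and invoke the injectivity of $\beta_{\mathbf{A}/\theta}$ as a known fact, whereas the paper carries out the two inclusions by direct element chasing, re-deriving the separation step each time.
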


\begin{proof}
Let $(a,b) \in \theta$. If $Y \in [\beta(a)^{c}]^{-}_{\mathcal{F}_{R_{q_{\theta}}}}$, then there exists $Q \in X(\A/\theta)$ such that $Y=R_{q_{\theta}}(Q)$ and $R_{q_{\theta}}(Q) \cap \beta(a)^{c} \neq \emptyset$. So, there is $P \in X(\A)$ such that $(Q,P) \in R_{q_{\theta}}$ and $P \notin \beta(a)$, i.e., $q_{\theta}^{-1}[Q] \subseteq P$ and $a \notin P$. It follows that $q_{\theta}(a) \notin Q$ and since $(a,b) \in \theta$, we have $q_{\theta}(b) \notin Q$. Then $b \notin q_{\theta}^{-1}[Q]$ and by Theorem \ref{separacion} there exists $P' \in X(\A)$ such that $q_{\theta}^{-1}[Q] \subseteq P'$ and $b \notin P'$. Thus, $P' \in R_{q_{\theta}}(Q) \cap \beta(b)^{c}$ and $R_{q_{\theta}}(Q)=Y \in [\beta(b)^{c}]^{-}_{\mathcal{F}_{R_{q_{\theta}}}}$. The other inclusion is similar. Hence, $[\beta(a)^{c}]^{-}_{\mathcal{F}_{R_{q_{\theta}}}} = [\beta(b)^{c}]^{-}_{\mathcal{F}_{R_{q_{\theta}}}}$ and $(a,b) \in \theta_{\mathcal{F}_{R_{q_{\theta}}}}$. Conversely, let $(a,b) \in \theta_{\mathcal{F}_{R_{q_{\theta}}}}$. Suppose that $(a,b) \notin \theta$. Then $q_{\theta}(a) \neq q_{\theta}(b)$ and by Theorem \ref{separacion} there exists $Q \in X(\A/\theta)$ such that $a \in q_{\theta}^{-1}[Q]$ and $b \notin q_{\theta}^{-1}[Q]$. Again by Theorem \ref{separacion}, there exists $P \in X(\A)$ such that $q_{\theta}^{-1}[Q] \subseteq P$ and $b \notin P$, i.e., $P \in R_{q_{\theta}}(Q) \cap \beta(b)^{c}$ and $R_{q_{\theta}}(Q) \in [\beta(b)^{c}]^{-}_{\mathcal{F}_{R_{q_{\theta}}}}$. Then, by assumption, we have $R_{q_{\theta}}(Q) \in [\beta(a)^{c}]^{-}_{\mathcal{F}_{R_{q_{\theta}}}}$ and $R_{q_{\theta}}(Q) \cap \beta(a)^{c} \neq \emptyset$. So, there is $P' \in X(\A)$ such that $q_{\theta}^{-1}[Q] \subseteq P'$ and $a \notin P'$. On the other hand, since $a \in q_{\theta}^{-1}[Q]$ it follows that $a \in P'$, which is a contradiction. Therefore, $(a,b) \in \theta$ and $\theta = \theta_{\mathcal{F}_{R_{q_{\theta}}}}$.
\end{proof}

Observe that Theorems \ref{congruences semilattices} and \ref{congruences semilattices 2} suggest the following result.

\begin{theorem}\label{characterization congruences vietoris}
Let $\A$ be a semilattice. Then the congruences on $\A$ are characterized by lower-Vietoris-type families of the dual $S$-space of $\A$.
\end{theorem}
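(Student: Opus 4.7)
The plan is to establish that the assignments $\theta \mapsto \mathcal{F}_{R_{q_\theta}}$ and $\mathcal{F} \mapsto \theta_\mathcal{F}$ form a pair of mutually inverse bijections between the collection of congruences on $\A$ and the collection of lower-Vietoris-type families of the dual $S$-space $\langle X(\A), \mathcal{K}_{\A} \rangle$. The proof will consist of two phases: verifying that each assignment is well-defined, and showing that they invert each other.

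First, I would check well-definedness in both directions. Given a congruence $\theta$ on $\A$, the natural projection $q_\theta \colon A \to A/\theta$ is an onto homomorphism, so by Theorem \ref{onto one-to-one} the associated meet-relation $R_{q_\theta} \subseteq X(\A/\theta) \times X(\A)$ is one-to-one. By Theorem \ref{theo-F space} (or directly by Corollary \ref{homomorphic space}), the pair $\langle \mathcal{F}_{R_{q_\theta}}, \mathcal{M} \rangle$ is an $S$-space and thus $\mathcal{F}_{R_{q_\theta}}$ is a lower-Vietoris-type family of $\langle X(\A), \mathcal{K}_{\A} \rangle$ in the sense of Definition \ref{lower-Vietoris-type family}. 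Conversely, if $\mathcal{F} \subseteq C_\K(X(\A))$ is a lower-Vietoris-type family, then Theorem \ref{congruences semilattices}(1) guarantees that $\theta_\mathcal{F}$, as defined in (\ref{equa congruence F}), is in fact a congruence on $\A$.

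Second, I would verify that the two assignments are mutual inverses. The identity $\theta = \theta_{\mathcal{F}_{R_{q_\theta}}}$ for every congruence $\theta$ is precisely the content of Theorem \ref{congruences semilattices 2}. The reverse identity $\mathcal{F} = \mathcal{F}_{R_{q_{\theta_\mathcal{F}}}}$ for every lower-Vietoris-type family $\mathcal{F}$ follows from the second part of Theorem \ref{congruences semilattices}(2), where $\mathcal{F} = \mathcal{F}_{R_{q_{\theta_\mathcal{F}}}}$ is stated explicitly.

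Assembling these pieces yields the desired bijective correspondence between congruences on $\A$ and lower-Vietoris-type families of its dual $S$-space, which is the content of the characterization. No substantial obstacle is anticipated, since the technical core of the argument (verifying that $\theta_\mathcal{F}$ is a congruence, identifying the lower-Vietoris-type families associated to quotient homomorphisms, and proving each of the two identities above) has already been carried out in Theorems \ref{congruences semilattices} and \ref{congruences semilattices 2}. The only delicate point to keep in mind is ensuring that the ``characterization'' is indeed interpreted as a bijective correspondence, as in the analogous Theorem \ref{homomorphic Vietoris} for homomorphic images, so that nothing beyond the compilation of these two previous results is required.
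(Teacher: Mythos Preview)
Your proposal is correct and matches the paper's approach exactly: the paper states this theorem immediately after Theorems \ref{congruences semilattices} and \ref{congruences semilattices 2} with the remark that those two results ``suggest'' it, and provides no further proof. Your compilation of those two theorems into a bijection $\theta \mapsto \mathcal{F}_{R_{q_\theta}}$, $\mathcal{F} \mapsto \theta_\mathcal{F}$ is precisely what the paper intends.
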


Something more can be said about the structure of the lower-Vietoris-type families of an $S$-space. Let $\langle X, \K \rangle$ be an $S$-space and let $\mathcal{F}_{1}, \mathcal{F}_{2}\in \mathcal{V}(X)$. Now let us regard the following relation on $\mathcal{V}(X)$:
 
\begin{displaymath}
\begin{array}{ccc}
\mathcal{F}_{1}\leq \mathcal{F}_{2} \Longleftrightarrow \forall U,V\in \mathcal{K} [ U^-_{\mathcal{F}_{2}}=V^-_{\mathcal{F}_{2}} \Rightarrow U^-_{\mathcal{F}_{1}}=V^-_{\mathcal{F}_{1}} ].
\end{array}
\end{displaymath}

\begin{theorem}\label{vietoris are a complete lattice}
Let $\langle X, \K \rangle$ be an $S$-space. Then $\langle \mathcal{V}(X), \leq \rangle$ is a complete lattice which is dually isomorphic to the set of all congruences of $\mathbf{S}(X)$. 
\end{theorem}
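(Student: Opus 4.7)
The strategy is to build an order-reversing bijection $\Phi \colon \mathcal{V}(X) \to \mathrm{Con}(\mathbf{S}(X))$ by transporting Theorems \ref{congruences semilattices}, \ref{congruences semilattices 2} and \ref{characterization congruences vietoris} to the abstract setting via the $S$-space homeomorphism $H_X \colon X \to X(\mathbf{S}(X))$. Once such a bijection is established, the fact that $\mathrm{Con}(\mathbf{S}(X))$ is a complete lattice under inclusion transfers a dual complete-lattice structure to $\mathcal{V}(X)$.

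Concretely, for each $\mathcal{F} \in \mathcal{V}(X)$, I would define
\begin{equation*}
\Phi(\mathcal{F}) = \{ (A,B) \in S(X)^2 \colon (A^c)^-_\mathcal{F} = (B^c)^-_\mathcal{F} \},
\end{equation*}
the abstract-$S$-space analog of the congruence $\theta_\mathcal{F}$ from \eqref{equa congruence F}. To see that $\Phi(\mathcal{F})$ really is a congruence of $\mathbf{S}(X)$, I would push $\mathcal{F}$ through $H_X$ to obtain $H_X[\mathcal{F}] = \{H_X[Y] \colon Y \in \mathcal{F}\}$. Using that $\mathcal{K}_{\mathbf{S}(X)} = \{H_X[U] \colon U \in \mathcal{K}\}$, this image is a lower-Vietoris-type family of the dual $S$-space of $\mathbf{S}(X)$, and Theorem \ref{congruences semilattices}(1) identifies $\Phi(\mathcal{F})$ with $\theta_{H_X[\mathcal{F}]} \in \mathrm{Con}(\mathbf{S}(X))$.

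The bijectivity of $\Phi$ then follows by the same transfer. For surjectivity, given $\theta \in \mathrm{Con}(\mathbf{S}(X))$, Theorem \ref{characterization congruences vietoris} (applied to $\mathbf{S}(X)$) produces a lower-Vietoris-type family $\mathcal{G}$ of $\langle X(\mathbf{S}(X)), \mathcal{K}_{\mathbf{S}(X)}\rangle$ with $\theta = \theta_\mathcal{G}$; pulling back through $H_X^{-1}$ gives $\mathcal{F} \in \mathcal{V}(X)$ with $\Phi(\mathcal{F}) = \theta$. For injectivity, Theorems \ref{congruences semilattices}(2) and \ref{congruences semilattices 2} jointly show that $\mathcal{F} \mapsto \theta_\mathcal{F}$ and $\theta \mapsto \mathcal{F}_{R_{q_\theta}}$ are mutually inverse, so $\Phi(\mathcal{F}_1) = \Phi(\mathcal{F}_2)$ forces $H_X[\mathcal{F}_1] = H_X[\mathcal{F}_2]$ and hence $\mathcal{F}_1 = \mathcal{F}_2$. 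The order $\leq$ on $\mathcal{V}(X)$ is rigged so that $\mathcal{F}_1 \leq \mathcal{F}_2$ is equivalent to $\Phi(\mathcal{F}_2) \subseteq \Phi(\mathcal{F}_1)$, which yields the order-reversal.

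The main obstacle I anticipate is the bookkeeping required to move lower-Vietoris-type families back and forth along $H_X$: one must verify that $H_X$ restricts to a bijection between $C_\mathcal{K}(X)$ and $C_{\mathcal{K}_{\mathbf{S}(X)}}(X(\mathbf{S}(X)))$, that it commutes with the operation $U \mapsto U^-_\mathcal{F}$ on subbasic open sets, and that it preserves the $S$-space structure $\langle \mathcal{F}, \mathcal{M}_\mathcal{F}\rangle$ in Definition \ref{lower-Vietoris-type family}. These verifications are routine given that $H_X$ is an $S$-space homeomorphism with $\mathcal{K}_{\mathbf{S}(X)} = \{H_X[U] \colon U \in \mathcal{K}\}$, but they form the technical core of the argument.
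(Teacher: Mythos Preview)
Your proposal is correct and follows essentially the same route as the paper. The paper's proof invokes Theorem~\ref{characterization congruences vietoris} together with the homeomorphism $H_X$ (cited there as Theorem~3.24 of \cite{Celani2020}) to conclude that $\mathcal{F}\mapsto\theta_{\mathcal{F}}$ and $\theta\mapsto\mathcal{F}_{\theta}$ are mutually inverse, then reads the order-reversal directly from the definition of $\leq$ and transports the complete-lattice structure from $\mathrm{Con}(\mathbf{S}(X))$; you spell out the transfer along $H_X$ more explicitly, but the argument is the same.
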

\begin{proof}
Let $\mathrm{Con}(\mathbf{S}(X))$ be the set of all congruences of $\mathbf{S}(X)$. We start by noticing that from Theorem \ref{characterization congruences vietoris} and Theorem 3.24 of \cite{Celani2020}, the maps $\mathrm{Con}(\mathbf{S}(X))\rightarrow \mathcal{V}(X)$, defined by $\theta\mapsto \mathcal{F}_{\theta}$ and $\mathcal{V}(X)\rightarrow \mathrm{Con}(\mathbf{S}(X))$, defined by $\mathcal{F}\mapsto \theta_{\mathcal{F}}$, are mutually inverse of each other. Then from Theorem \ref{congruences semilattices}, it is immediate that $\mathcal{F}_{1}\leq \mathcal{F}_{2}$ if and only if $\theta_{\mathcal{F}_{2}}\subseteq \theta_{\mathcal{F}_{1}}$. Hence, by general reasons $\langle \mathcal{V}(X), \leq \rangle$ is a poset which is dually isomorphic to the poset $\langle \mathrm{Con}(\mathbf{S}(X)), \subseteq \rangle$. Moreover, since $\mathrm{Con}(\mathbf{S}(X))$ is a complete lattice then if $\{\mathcal{F}_{i}\colon i\in I\}$ is a family of lower-Vietoris-type families of $\langle X, \K \rangle$, the latter allows us to define: 
\begin{equation*}
\begin{array}{ccc}
\displaystyle \bigvee \{\mathcal{F}_{i} \colon i \in I\} & = & \mathcal{F}_{\bigcap \{ \theta_{\mathcal{F}_{i}} \colon i \in I\}}
\end{array}
\end{equation*}
and
\begin{equation*}
\begin{array}{ccc}
\displaystyle \bigwedge \{\mathcal{F}_{i}\colon i\in I\} & = & \mathcal{F}_{\bigvee \{\theta_{\mathcal{F}_{i}}\colon i\in I\}}.
\end{array}
\end{equation*}
This concludes the proof.  
\end{proof}

Now we provide a characterization of the congruences of monotone semilattices by means of lower-Vietoris-type families. Unless necessary, we will omit the most of the proofs of the following results due to essentially they use the same ideas employed for the case of semilattices.   

\begin{definition}
Let $\langle X_1, \mathcal{K}_{1}, R_{1}\rangle$ and $\langle X_2, \mathcal{K}_{2}, R_{2}\rangle$ be two $mS$-spaces. A monotone meet-relation $T \subseteq X_1 \times X_2$ is \emph{one-to-one} if it is one-to-one as a meet-relation. 
\end{definition}

\begin{theorem}
Let $\langle X_1, \mathcal{K}_{1}, R_{1}\rangle$ and $\langle X_2, \mathcal{K}_{2}, R_{2}\rangle$ be two $mS$-spaces. Let $T \subseteq X_1 \times X_2$ be a monotone meet-relation. Then the monotone homomorphism $ \Box_T \colon S(X_2) \rightarrow S(X_1)$ is onto if and only if $T$ is a one-to-one monotone meet-relation.
\end{theorem}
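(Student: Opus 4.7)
The plan is to reduce this statement to Theorem \ref{onto one-to-one}, which already establishes the analogous equivalence for plain meet-relations between $S$-spaces. The key observation is that both sides of the biconditional in the current statement depend only on the underlying meet-relation structure of $T$, not on its interaction with the multirelations $R_1$ and $R_2$. On the left, $\Box_T \colon S(X_2) \to S(X_1)$ being onto is a property of the function $\Box_T$ which, by the definition of the functor $\Box$, is determined purely by $T$ as a meet-relation. On the right, the definition of one-to-one monotone meet-relation says verbatim that $T$ is one-to-one as a meet-relation, so the ``monotone'' qualifier imposes no additional constraint on the one-to-one condition itself.

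Concretely, I would first invoke the assumption that $T$ is a monotone meet-relation to get that $T$ is, in particular, a meet-relation between the underlying $S$-spaces $\langle X_1, \mathcal{K}_1 \rangle$ and $\langle X_2, \mathcal{K}_2 \rangle$. Then, by Theorem \ref{onto one-to-one}, the homomorphism $\Box_T \colon S(X_2) \to S(X_1)$ is onto if and only if $T$ is one-to-one as a meet-relation. Finally, by the definition of one-to-one monotone meet-relation, the latter is exactly the statement that $T$ is a one-to-one monotone meet-relation (since $T$ was already assumed monotone). The equivalence therefore follows.

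There is essentially no obstacle here: the monotonicity is already built into the hypothesis and is preserved on both sides, so no new argument specific to the multirelations $R_1, R_2$ or to the operators $m_{R_1}, m_{R_2}$ is required. The proof amounts to an application of Theorem \ref{onto one-to-one} together with an unpacking of the definition of one-to-one monotone meet-relation. In particular, one does not need to re-derive the construction producing, for each $x \in X_1$ and each $U \in S(X_1)$ with $x \notin U$, a set $V \in S(X_2)$ with $U \subseteq \Box_T(V)$ and $x \notin \Box_T(V)$; this is precisely what Theorem \ref{onto one-to-one} provides, and it transfers without modification to the monotone setting.
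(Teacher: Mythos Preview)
Your proposal is correct and matches the paper's approach exactly: the paper omits the proof of this theorem, stating that it ``essentially use[s] the same ideas employed for the case of semilattices,'' which is precisely your reduction to Theorem~\ref{onto one-to-one} via the definition of one-to-one monotone meet-relation. There is nothing to add.
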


\begin{corollary}\label{cormon}
Let $\langle \mathbf{A}, m \rangle, \langle \mathbf{B}, n \rangle \in \mathcal{MS}$ and $h \colon A \rightarrow B$ be a monotone homomorphism. Then $h$ is onto if and only if $R_{h}$ is a one-to-one monotone meet-relation. 
\end{corollary}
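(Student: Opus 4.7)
The plan is to derive this corollary by directly combining two previously established results, since the hypothesis that $h$ is a monotone homomorphism already secures the ``monotone'' component of the conclusion, leaving only the ``one-to-one'' component to be handled.

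First I would observe that because $h$ is a monotone homomorphism by hypothesis, Lemma \ref{homo cond} immediately gives that $R_{h} \subseteq X(\mathbf{B}) \times X(\mathbf{A})$ is a monotone meet-relation. So the content of the statement reduces to characterizing surjectivity of $h$ in terms of the one-to-one property of $R_{h}$ as a meet-relation, since by the definition of a one-to-one monotone meet-relation the ``monotone'' adjective is merely inherited.

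Next I would invoke the corollary immediately following Theorem \ref{onto one-to-one}, which asserts that for an arbitrary semilattice homomorphism $h \colon A \to B$, surjectivity of $h$ is equivalent to $R_{h}$ being a one-to-one meet-relation. Applying that corollary to our $h$ (forgetting momentarily that it is monotone) yields exactly the desired equivalence between $h$ being onto and $R_{h}$ being one-to-one as a meet-relation.

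Finally, chaining the two observations: $h$ is onto if and only if $R_{h}$ is a one-to-one meet-relation, and $R_{h}$ is automatically a monotone meet-relation by Lemma \ref{homo cond}; thus $h$ is onto if and only if $R_{h}$ is a one-to-one monotone meet-relation. There is no real obstacle here, as the corollary is essentially an assemblage of prior results; the only subtlety worth making explicit is that the ``one-to-one'' notion for monotone meet-relations is definitionally the same as for meet-relations, so no separate verification is needed when passing between the two settings.
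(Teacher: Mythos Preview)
Your proposal is correct and matches the paper's intended approach: the paper omits the proof of this corollary, stating that the results in this block ``use the same ideas employed for the case of semilattices,'' which is precisely your strategy of invoking Lemma~\ref{homo cond} for the monotone part and the semilattice-level corollary after Theorem~\ref{onto one-to-one} for the one-to-one part, together with the observation that the one-to-one notion is definitionally unchanged in the monotone setting.
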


In the conditions of Corollary \ref{cormon}, we denote by $\lambda$ the homeomorphism of Corollary \ref{homomorphic space} between the $S$-spaces $\langle \mathcal{F}_{R_h}, \mathcal{M}\rangle$ and $\langle X(\mathbf{B}), \mathcal{K}_{\mathbf{A}} \rangle$, i.e., $\lambda \colon X(\mathbf{B}) \to \mathcal{F}_{R_h}$ is the map defined by
\begin{equation*} 
\lambda(P)=R_h(P)
\end{equation*}
for all $P\in X(\mathbf{B})$.

\begin{theorem}\label{induced homeo of mS}
Let $\langle \mathbf{A}, m \rangle, \langle \mathbf{B}, n \rangle \in \mathcal{MS}$ and $h \colon A \rightarrow B$ be an onto monotone homomorphism. Let $T \subseteq \mathcal{F}_{R_h} \times \mathcal{Z}(\mathcal{F}_{R_h})$ be the relation given by
\begin{equation*}
(R_{h}(Q), Z) \in T \Longleftrightarrow (Q, \lambda^{-1}[Z]) \in R_{n}
\end{equation*}
where $Q \in X(\mathbf{B})$ and $Z\in \mathcal{Z}(\mathcal{F}_{R_h})$. Then $\langle \mathcal{F}_{R_h}, \mathcal{M}, T \rangle$ is an $mS$-space which is homeomorphic to $\langle X(\mathbf{B}),\mathcal{K}_{\mathbf{B}}, R_{n}\rangle$ and it satisfies the following condition
\begin{equation*}
\beta(a)^c\in R_m[R_h(P)]\Longleftrightarrow H_a\in T(R_h(P))
\end{equation*}
for all $a\in A$ and $P\in X(\mathbf{B})$.
\end{theorem}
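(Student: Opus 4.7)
The plan is to derive the theorem as a direct application of Corollary \ref{homeomorphisms induced by mS-spaces}, combined with the topological characterization of monotone meet-relations supplied by Proposition \ref{meet-relations topolical}. First, since $h$ is an onto monotone homomorphism, Lemma \ref{homo cond} and Corollary \ref{cormon} ensure that $R_h \subseteq X(\mathbf{B}) \times X(\mathbf{A})$ is a one-to-one monotone meet-relation. Corollary \ref{homomorphic space} then yields that the map $\lambda \colon X(\mathbf{B}) \to \mathcal{F}_{R_h}$, $\lambda(P)=R_h(P)$, is a homeomorphism between the $S$-spaces $\langle X(\mathbf{B}), \mathcal{K}_{\mathbf{B}} \rangle$ and $\langle \mathcal{F}_{R_h}, \mathcal{M} \rangle$; in fact the correspondence $\beta_{\mathbf{B}}(h(a))^c \mapsto H_a$ built into its proof gives $\mathcal{M} = \{\lambda[V] \colon V \in \mathcal{K}_{\mathbf{B}}\}$, which is precisely the hypothesis of Corollary \ref{homeomorphisms induced by mS-spaces}.

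Applying that corollary to $\lambda$ and to the $mS$-space $\langle X(\mathbf{B}), \mathcal{K}_{\mathbf{B}}, R_n \rangle$ produces a multirelation $R'_n \subseteq \mathcal{F}_{R_h} \times \mathcal{Z}(\mathcal{F}_{R_h})$ making $\langle \mathcal{F}_{R_h}, \mathcal{M}, R'_n \rangle$ into an $mS$-space isomorphic to $\langle X(\mathbf{B}), \mathcal{K}_{\mathbf{B}}, R_n \rangle$ in $\mathsf{mSp}$, whose defining condition is $(\lambda(Q), W) \in R'_n \Longleftrightarrow (Q, \lambda^{-1}[W]) \in R_n$. Since this matches verbatim the relation $T$ of the statement, the first two assertions of the theorem follow at once.

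For the final equivalence, the key is that $R_h$ is monotone, so Proposition \ref{meet-relations topolical} (applied with $R_1 = R_n$, $R_2 = R_m$, $x = P$, and $U = \beta(a)$) delivers
\begin{equation*}
\beta(a)^c \in R_m[R_h(P)] \Longleftrightarrow R_h^{-1}[\beta(a)^c] \in R_n(P).
\end{equation*}
On the other hand, by the definitions of $T$ and of $\lambda$,
\begin{equation*}
H_a \in T(R_h(P)) \Longleftrightarrow (P, \lambda^{-1}[H_a]) \in R_n \Longleftrightarrow \lambda^{-1}[H_a] \in R_n(P).
\end{equation*}
It thus suffices to establish $R_h^{-1}[\beta(a)^c] = \lambda^{-1}[H_a]$, which is immediate: both sets coincide with $\{Q \in X(\mathbf{B}) \colon R_h(Q) \cap \beta(a)^c \neq \emptyset\}$ directly from the definitions of $\lambda$ and of $H_a$.

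I do not anticipate any substantive obstacle: the heavy lifting has already been done in Corollaries \ref{homomorphic space} and \ref{homeomorphisms induced by mS-spaces} and Proposition \ref{meet-relations topolical}. The only care needed is the bookkeeping identifying the relation produced by Corollary \ref{homeomorphisms induced by mS-spaces} with the $T$ defined in the statement, together with the elementary set equality $R_h^{-1}[\beta(a)^c] = \lambda^{-1}[H_a]$.
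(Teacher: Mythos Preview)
Your proposal is correct and follows essentially the same route as the paper's proof: both invoke Corollary \ref{homomorphic space} to obtain $\mathcal{M} = \{\lambda[V] \colon V \in \mathcal{K}_{\mathbf{B}}\}$, identify $T$ with the multirelation produced by Corollary \ref{homeomorphisms induced by mS-spaces}, and then use the monotonicity of $R_h$ (via Proposition \ref{meet-relations topolical}) together with the set identity $\lambda[R_h^{-1}[\beta(a)^c]] = H_a$ (equivalently, your $R_h^{-1}[\beta(a)^c] = \lambda^{-1}[H_a]$, since $\lambda$ is a bijection) to conclude the final equivalence. Your write-up is slightly more explicit about which results are being applied, but the argument is the same.
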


\begin{proof}
We recall that in Corollary \ref{homomorphic space} it is proved that $\mathcal{M} = \{ \lambda[V] \colon V \in \mathcal{K}_{\mathbf{B}} \}$. Moreover, we stress that $T$ is precisely the relation claimed in Corollary \ref{homeomorphisms induced by mS-spaces}. Hence, the first result follows.

Since $R_h$ is a monotone meet-relation, we have 
\begin{equation*}
\beta(a)^c\in R_m[R_h(P)] \Longleftrightarrow R_h^{-1}[\beta (a)^c]\in R_n(P)
\end{equation*}
for all $a\in A$ and $P\in X(\mathbf{B})$. It is easy to see that $\lambda[R_h^{-1}[\beta (a)^c]]=H_a$. So, by definition of $T$ we have that $R_h^{-1}[\beta (a)^c]\in R_n(P)$ if and only if $H_a\in T(R_h(P))$.
\end{proof}

In this part we study the congruences of monotone semilattices. To achieve this goal, let $\langle X, \K, R \rangle$ be an $mS$-space and let $\mathcal{F} \in \mathcal{V}(X)$. For a subset $H\subseteq \mathcal{Z}(X)$ we consider the set 
\begin{equation*}
[H\cap\mathcal{K})_{\mathcal{F}}=\{V\in \mathcal{K} \colon \exists U\in H\cap \mathcal{K}\; [U^-_\mathcal{F}\subseteq V^-_\mathcal{F}]\}.
\end{equation*}
We say that a subset $H\subseteq \mathcal{Z}(X)$ is \emph{$\mathcal{M}_\mathcal{F}$-increasing} if $[H\cap\mathcal{K})_{\mathcal{F}}=H\cap\mathcal{K}$.

\begin{definition}
Let $\langle X, \K, R \rangle$ be an $mS$-space. A family $\mathcal{F}$ of non-empty subbasic closed subsets of $\langle X, \K \rangle$ is a \emph{monotone lower-Vietoris-type family} if $\langle \mathcal{F}, \mathcal{M}_{\mathcal{F}} \rangle$ is an $S$-space and for every $Y\in \mathcal{F}$ we have that $R[Y]=\{ Z \in \mathcal{Z}(X) \colon \exists y \in Y [(y,Z) \in R] \}$ is a $\mathcal{M}_\mathcal{F}$-increasing subset.
\end{definition}

We write $\mathcal{V}_{m}(X)$ for the set of monotone lower-Vietoris-type families of an $mS$-space $\langle X, \K, R \rangle$. Note that as a consequence of Theorem \ref{induced homeo of mS}, if  $\langle \mathbf{A}, m \rangle, \langle \mathbf{B}, n \rangle \in \mathcal{MS}$ and $h \colon A \rightarrow B$ is an onto monotone homomorphism, we get that $R_m[R_h(P)]$ is $\mathcal{M}$-increasing for all $P\in X(\mathbf{B})$. So, $\mathcal{F}_{R_h}$ is a monotone lower-Vietoris-type family.

\begin{theorem} \label{theo-FM space}
Let $\langle \A, m\rangle\in \mathcal{MS}$ and $\langle X, \K,R \rangle$ be an $mS$-space. Let $T \subseteq X \times X(\A)$ be a one-to-one monotone meet-relation. Then $\mathcal{F}_T$ is a monotone lower-Vietoris-type family of $\langle X(\A), \mathcal{K}_\A,R_m\rangle$.
\end{theorem}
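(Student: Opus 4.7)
The plan is to verify the two defining properties of a monotone lower-Vietoris-type family: first, that $\langle \mathcal{F}_T, \mathcal{M}_{\mathcal{F}_T}\rangle$ is an $S$-space, and second, that $R_m[Y]$ is an $\mathcal{M}_{\mathcal{F}_T}$-increasing subset for every $Y\in\mathcal{F}_T$. The first point follows directly from Theorem \ref{theo-F space} applied to the one-to-one meet-relation $T \subseteq X \times X(\A)$: that result yields an $S$-space with subbase $\mathcal{M}=\{H_a \colon a \in A\}$, and a quick unpacking shows $H_a=[\beta(a)^{c}]^{-}_{\mathcal{F}_T}$, so $\mathcal{M}=\mathcal{M}_{\mathcal{F}_T}$.

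For the second point I would fix $Y=T(x_{0})\in\mathcal{F}_T$ and check only the non-trivial inclusion $[R_m[Y]\cap \K_\A)_{\mathcal{F}_T} \subseteq R_m[Y]\cap \K_\A$. Let $\beta(a)^{c}$ belong to the left-hand side; then there exists $\beta(b)^{c}\in R_m[Y]\cap\K_\A$ with $H_b \subseteq H_a$. Using Remark \ref{basecong} this translates to $\Box_T(\beta(a)) \subseteq \Box_T(\beta(b))$, so $\Box_T(\beta(b))^c \subseteq \Box_T(\beta(a))^c$. Because $T$ is a monotone meet-relation, Proposition \ref{meet-relations topolical} applies: for every $c \in A$, $\beta(c)^{c} \in R_m[T(x_{0})]$ if and only if $T^{-1}[\beta(c)^{c}]\in R(x_{0})$. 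A direct computation using Remark \ref{basecong} gives $T^{-1}[\beta(c)^{c}]=\Box_T(\beta(c))^{c}$, and this element already belongs to $\mathcal{Z}(X)$ since $\Box_T(\beta(c))\in S(X)$.

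Combining these observations, $\beta(b)^{c}\in R_m[Y]$ gives $\Box_T(\beta(b))^{c}\in R(x_{0})$, and to push this up to $\Box_T(\beta(a))^{c}$ I would invoke the upward closure of $R(x_{0})$ under $\subseteq$ in $\mathcal{Z}(X)$: from condition (2) of Definition \ref{t esp mon}, $R(x_{0})=\bigcap\{L_U \colon x_{0}\in m_R(U)\}$, and if $Z\subseteq Z'$ with $Z\in L_U$ then $Z'\in L_U$ trivially, so membership in $R(x_{0})$ is preserved. Thus $\Box_T(\beta(a))^{c}\in R(x_{0})$, and a second application of Proposition \ref{meet-relations topolical} delivers $\beta(a)^{c}\in R_m[T(x_{0})]=R_m[Y]$.

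The main obstacle is bookkeeping rather than depth: three different relational layers (the multirelation $R$ on $X\times \mathcal{Z}(X)$, the dual multirelation $R_m$ on $X(\A)\times\mathcal{Z}(X(\A))$ obtained from $m$, and the lower-Vietoris-type topology on $\mathcal{F}_T$) must be reconciled through the meet-relation $T$. The two clean conceptual ingredients that make the argument go through are the identity $T^{-1}[\beta(c)^{c}]=\Box_T(\beta(c))^{c}$ (which brings Proposition \ref{meet-relations topolical} into play) and the fact that $R(x_{0})$ is an up-set under $\subseteq$ (which is the only place where we need to \emph{grow} a set rather than shrink it).
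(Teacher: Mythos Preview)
Your proof is correct and follows essentially the same route as the paper's: both invoke Theorem~\ref{theo-F space} for the $S$-space structure, then use Proposition~\ref{meet-relations topolical} to transport the condition $\beta(c)^{c}\in R_m[T(x)]$ to $T^{-1}[\beta(c)^{c}]\in R(x)$, push along the inclusion coming from $H_b\subseteq H_a$, and transport back. The only difference is cosmetic: the paper swaps the labels of $a$ and $b$, and it leaves the upward closure of $R(x)$ in $\mathcal{Z}(X)$ implicit, whereas you justify it explicitly via condition~(2) of Definition~\ref{t esp mon}.
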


\begin{proof}
From Theorem 9 we have that $\langle\mathcal{F}_T,\mathcal{M}\rangle$ is an $S$-space. Let $x\in X$, we will prove that $R_m[T(x)]$ is $\mathcal{M}$-increasing. Let $a,b\in A$ be such that $H_a\subseteq H_b$ and suppose that $\beta(a)^c\in R_m[T(x)]$. We will see that $\beta(b)^c\in R_m[T(x)]$. Since $T$ is a monotone meet-relation we have $T^{-1}[\beta(a)^c]\in R(x)$. It is easy to see that $y\in T^{-1}[\beta(a)^c]$ if and only if $R(y)\in H_a$. From $H_a \subseteq H_b$ we get that $T^{-1}[\beta(a)^c]\subseteq T^{-1}[\beta(b)^c]$. So, $T^{-1}[\beta(b)^c]\in R(x)$ and this implies that $\beta(b)^c\in R_m[T(x)]$.
\end{proof}

\begin{lemma}
Let $\langle \A, m\rangle\in \mathcal{MS}$ and $\mathcal{F} \subseteq C_\K(X(\A))$ be a monotone lower-Vietoris-type family of the dual $S$-space of $\A$. Let us consider the relation $R\subseteq \mathcal{F}\times \mathcal{Z}(\mathcal{F})$ given by 
\begin{equation*}
(Y,Z)\in R \Longleftrightarrow Z\in \bigcap\{L_{H_a^c} \colon a\in A\text{ and }\beta(a)^c\notin R_m[Y]\}.
\end{equation*}
Then $\langle\mathcal{F},\mathcal{M},R\rangle$ is an $mS$-space and $R_\mathcal{F} \subseteq \mathcal{F} \times X(\A)$ is a one-to-one monotone meet-relation.
\end{lemma}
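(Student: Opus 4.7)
\emph{Proof plan.} The statement splits into two claims: that $\langle\mathcal{F},\mathcal{M},R\rangle$ is an $mS$-space, and that $R_{\mathcal{F}}$ is a one-to-one monotone meet-relation. The previous lemma already yields that $R_{\mathcal{F}}$ is a one-to-one meet-relation (it applies because every monotone lower-Vietoris-type family is in particular a lower-Vietoris-type family), so only its monotonicity remains to be shown; similarly, $\langle\mathcal{F},\mathcal{M}\rangle$ is an $S$-space by hypothesis, so for the first claim it suffices to verify the two clauses of Definition \ref{t esp mon}. My plan is to first establish the following translation, valid for every $Y\in\mathcal{F}$ and every $a\in A$:
\begin{equation*}
\beta(a)^{c}\in R_{m}[Y] \Longleftrightarrow Y\in H_{ma}.
\end{equation*}
To prove it, unfold definitions: $\beta(a)^{c}\in R_{m}[Y]$ says some $P\in Y$ satisfies $m^{-1}[P]\cap I_{\A}(\beta(a)^{c})=\emptyset$; since $I_{\A}(\beta(a)^{c})=(a]$, and because $m$ is monotone and $P$ is an upset, this collapses to $ma\notin P$, i.e., $Y\not\subseteq\beta(ma)$.

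Armed with this translation, my central target for the first claim is the formula $m_{R}(H_{a}^{c})=H_{ma}^{c}$: it delivers clause (1) of Definition \ref{t esp mon} immediately, and clause (2) then follows automatically, since $\{U\in S(\mathcal{F}) : Y\in m_{R}(U)\}=\{H_{a}^{c} : \beta(a)^{c}\notin R_{m}[Y]\}$ is exactly the index set used to define $R(Y)$. One inclusion in the formula is straightforward: if $\beta(a)^{c}\notin R_{m}[Y]$, then $L_{H_{a}^{c}}$ appears in the intersection defining $R(Y)$, so $R(Y)\subseteq L_{H_{a}^{c}}$. The reverse inclusion is the heart of the matter: assuming $Y\in H_{ma}$ I will exhibit $Z\in R(Y)$ with $Z\subseteq H_{a}$, namely $Z=H_{a}$ itself, which lies in $\mathcal{Z}(\mathcal{F})$ via the singleton dually directed family $\{H_{a}\}$. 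Checking $H_{a}\in R(Y)$ then amounts to $H_{a}\not\subseteq H_{b}$ for every $b$ with $Y\notin H_{mb}$; contrapositively, $H_{a}\subseteq H_{b}$ must force $Y\in H_{mb}$. This is exactly the $\mathcal{M}$-increasingness of $R_{m}[Y]$ supplied by the monotone lower-Vietoris-type hypothesis, applied to $\beta(a)^{c}\in R_{m}[Y]\cap\mathcal{K}_{\A}$ together with $[\beta(a)^{c}]^{-}_{\mathcal{F}}=H_{a}\subseteq H_{b}=[\beta(b)^{c}]^{-}_{\mathcal{F}}$. This use of $\mathcal{M}$-increasingness is where I expect the main obstacle to lie, and it is the single step where the monotonicity of the family $\mathcal{F}$ (as opposed to plain lower-Vietoris-typeness) genuinely intervenes.

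The second claim is then a variation on the same theme. Proposition \ref{meet-relations topolical} reduces monotonicity of $R_{\mathcal{F}}$ to the equivalence $\beta(a)^{c}\in R_{m}[R_{\mathcal{F}}(Y)] \Longleftrightarrow R_{\mathcal{F}}^{-1}[\beta(a)^{c}]\in R(Y)$, for every $Y\in\mathcal{F}$ and every $U=\beta(a)\in S(X(\A))$. Since $R_{\mathcal{F}}(Y)=Y$ and $R_{\mathcal{F}}^{-1}[\beta(a)^{c}]=H_{a}$ follow at once from the definition of $R_{\mathcal{F}}$, the equivalence becomes $\beta(a)^{c}\in R_{m}[Y]\Longleftrightarrow H_{a}\in R(Y)$; the translation rewrites the left side as $Y\in H_{ma}$, while unfolding the definition of $R$ shows that the right side is equivalent to ``$H_{a}\subseteq H_{b}$ implies $Y\in H_{mb}$.'' Both directions of this equivalence are then handled by the same $\mathcal{M}$-increasingness argument already used for the first claim, the easier direction coming by specializing to $b=a$.
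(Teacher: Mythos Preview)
Your proposal is correct and follows essentially the same route as the paper's proof: both establish $m_{R}(H_{a}^{c})=H_{ma}^{c}$ (the paper writes it as $m_{R}(H_{a}^{c})^{c}=H_{ma}$), use $\mathcal{M}$-increasingness of $R_{m}[Y]$ at the decisive step, deduce clause~(2) of Definition~\ref{t esp mon} from the very shape of $R$, and reduce the monotonicity of $R_{\mathcal{F}}$ to the equivalence $\beta(a)^{c}\in R_{m}[Y]\Longleftrightarrow H_{a}\in R(Y)$ after noting $R_{\mathcal{F}}^{-1}[\beta(a)^{c}]=H_{a}$. The only cosmetic difference is that you compute the translation $\beta(a)^{c}\in R_{m}[Y]\Leftrightarrow Y\in H_{ma}$ directly from $I_{\A}(\beta(a)^{c})=(a]$, whereas the paper routes through $m_{R_{m}}(\beta(a))=\beta(ma)$; and you exhibit the witness $Z=H_{a}$ explicitly, while the paper leaves the passage from ``some $Z\in R(Y)$ with $Z\subseteq H_{a}$'' to ``$H_{a}\in R(Y)$'' implicit (it follows since each $L_{U}$, hence $R(Y)$, is upward closed).
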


\begin{proof}
We know that $\langle\mathcal{F},\mathcal{M}\rangle$ is an $S$-space. Let $a\in A$. We will see that $m_R(H_a^c)\in S(\mathcal{F})$. Since 
\begin{equation*}
m_R(H_a^c) = \{ Y \in \mathcal{F} \colon \forall Z \in R(Y) [Z \cap H_a^c \neq \emptyset] \},
\end{equation*}
we have $Y\notin m_R(H_a^c)$ if and only if there exists $Z\in R(Y)$ such that $Z\notin L_{H_a^c}$. So, by definition of $R$ we get $\beta(a)^c\in R_m[Y]$. 

On the other hand, let $\beta(a)^c\in R_m[Y]$ and suppose that $Y\in m_R(H_a^c)$. We have $H_a\notin R(Y)$. So, there exists $b\in A$ such that $\beta(b)^c\notin R_m[Y]$ and $H_a\cap H_b^c=\emptyset$. Thus, $H_a\subseteq H_b$ and since $R_m[Y]$ is $\mathcal{M}$-increasing we get a contradiction. Therefore, $Y\notin m_R(H_a^c)$ if and only if $\beta (a)^c\in R_m[Y] $ and it is easy to see that $\beta (a)^c\in R_m[Y] $ if and only if $Y\cap m_{R_m}(\beta (a))^c\neq \emptyset$.  As $ m_{R_m}(\beta (a))^c = \beta (m a)^c$, we finally get that $m_R(H_a^c)^c=H_{ma}$ and then $m_R(H_a^c)\in S(\mathcal{F})$. It is immediate to see that $R(Y) = \bigcap\{L_{H_a^c} \colon a \in A \text{ and }Y\in m_R(H_a^c)\}$. So, $\langle\mathcal{F},\mathcal{M},R\rangle$ is an $mS$-space.

We will see now that $R_\mathcal{F}$ is a monotone one-to-one relation. Let $a\in A$. We only need to prove that 
\begin{equation*}
\beta(a)^c\in R_m[Y]\Longleftrightarrow R_\mathcal{F}^{-1}[\beta (a)^c]\in R(Y).
\end{equation*}
Note that $R_\mathcal{F}^{-1}[\beta (a)^c]=H_a$. Then 
\begin{equation*}
\beta(a)^c\in R_m[Y]\Longleftrightarrow Y\notin m_R(H_a^c) \Longleftrightarrow  H_a\in R(Y)
\end{equation*}
and the assertion follows.
\end{proof}

\begin{theorem}\label{homomorphic Vietoris mon}
Let $\langle \A,m\rangle\in\mathcal{MS}$. Then the homomorphic images of $\langle \A,m\rangle$ are characterized by monotone lower-Vietoris-type families of the dual $S$-space of $\A$.
\end{theorem}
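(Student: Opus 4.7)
The plan is to follow the same two-step pattern used for the semilattice version (Theorem \ref{homomorphic Vietoris}), but carrying along the monotone data. The forward direction is immediate from the material already assembled: if $\pi\colon \langle \A,m\rangle \twoheadrightarrow \langle \mathbf{B},n\rangle$ is an onto monotone homomorphism, then by Corollary \ref{cormon} the meet-relation $R_\pi \subseteq X(\mathbf{B})\times X(\A)$ is a one-to-one monotone meet-relation, and Theorem \ref{theo-FM space} shows that $\mathcal{F}_{R_\pi}$ is a monotone lower-Vietoris-type family of $\langle X(\A),\mathcal{K}_\A,R_m\rangle$. Moreover Theorem \ref{induced homeo of mS} realises the dual $mS$-space of $\langle \mathbf{B},n\rangle$ as $\langle \mathcal{F}_{R_\pi},\mathcal{M},T\rangle$, so the assignment $\pi \mapsto \mathcal{F}_{R_\pi}$ is well defined on homomorphic images.

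For the converse, let $\mathcal{F}$ be a monotone lower-Vietoris-type family of $\langle X(\A),\mathcal{K}_\A,R_m\rangle$. The lemma immediately preceding the theorem produces an $mS$-space $\langle \mathcal{F},\mathcal{M},R\rangle$, with $R$ defined via the sets $L_{H_a^c}$, and shows that $R_\mathcal{F}\subseteq \mathcal{F}\times X(\A)$ is a one-to-one monotone meet-relation. Applying the contravariant functor $H\colon \mathsf{mSp}\to \mathsf{mMS}^{op}$ from Section \ref{duality} to $R_\mathcal{F}$ yields a monotone homomorphism $\Box_{R_\mathcal{F}}\colon \mathbf{S}(X(\A))\to \mathbf{S}(\mathcal{F})$ between the associated monotone semilattices $\langle \mathbf{S}(X(\A)),m_{R_m}\rangle$ and $\langle \mathbf{S}(\mathcal{F}),m_R\rangle$; pre-composing with the isomorphism $\beta_\A$ of Theorem \ref{duality theorem} produces the onto monotone homomorphism $\Box_{R_\mathcal{F}}\beta_\A\colon \langle \A,m\rangle\twoheadrightarrow \langle \mathbf{S}(\mathcal{F}),m_R\rangle$, exhibiting $\langle \mathbf{S}(\mathcal{F}),m_R\rangle$ as a homomorphic image of $\langle \A,m\rangle$.

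To finish, I would argue that these two processes are mutually inverse up to isomorphism of homomorphic images. Starting with $\mathcal{F}$, the non-monotone result (the analogue of Lemma 43 in \cite{CelaniCalomino} used for Theorem \ref{homomorphic Vietoris}) already gives $\mathcal{F}=\mathcal{F}_{R_\mathcal{F}}$, so the monotone lower-Vietoris-type family recovered from $\Box_{R_\mathcal{F}}\beta_\A$ coincides with $\mathcal{F}$. Conversely, starting with $\pi\colon \langle \A,m\rangle\twoheadrightarrow \langle \mathbf{B},n\rangle$, Theorem \ref{induced homeo of mS} gives an isomorphism of $mS$-spaces between $\langle X(\mathbf{B}),\mathcal{K}_{\mathbf{B}},R_n\rangle$ and $\langle \mathcal{F}_{R_\pi},\mathcal{M},T\rangle$, and via the duality of Theorem \ref{duality theorem} this transports $\pi$ to the canonical surjection onto $\langle \mathbf{S}(\mathcal{F}_{R_\pi}),m_T\rangle$.

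The main obstacle, and the one place where the monotone case departs in substance from the semilattice case, is to verify that the two relations on $\mathcal{F}$ produced by the two constructions agree, i.e.\ that the relation $R$ built from the sets $L_{H_a^c}$ in the backward direction matches the relation $T$ induced by $R_n$ in the forward direction. This amounts to checking the equivalence $\beta(a)^c \in R_m[Y] \Longleftrightarrow H_a \in T(Y)$ of Theorem \ref{induced homeo of mS}, together with the $\mathcal{M}_\mathcal{F}$-increasing property of $R_m[Y]$; once that is in place, the remainder of the verification reduces to routine applications of Theorems \ref{onto one-to-one}, \ref{duality theorem}, the lemma preceding the statement, and the corresponding non-monotone correspondence of Theorem \ref{homomorphic Vietoris}.
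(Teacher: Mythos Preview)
Your approach is correct and matches the paper's intent: the paper omits this proof entirely, remarking that it uses the same ideas as the semilattice case (Theorem \ref{homomorphic Vietoris}), and your two-step outline does exactly that, invoking Corollary \ref{cormon}, Theorem \ref{theo-FM space}, Theorem \ref{induced homeo of mS}, and the lemma preceding the statement at the right points.

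The ``main obstacle'' you flag, namely whether the multirelation $R$ produced by the lemma coincides with the multirelation $T$ of Theorem \ref{induced homeo of mS}, is lighter than you make it sound. Condition 2 of Definition \ref{t esp mon} forces the multirelation of an $mS$-space to be completely determined by the monotone operation it induces on $S(X)$. The proof of the lemma already computes $m_R(H_a^c)=H_{ma}^c$; transporting $m_{R_n}=n^{\pi}$ along the homeomorphism $\lambda$ of Theorem \ref{induced homeo of mS} and using $n\pi=\pi m$ gives $m_T(H_a^c)=H_{ma}^c$ as well. Hence $m_R=m_T$, and by condition 2 this forces $R=T$. So no separate relational verification is required beyond what is already contained in the lemma and in Theorem \ref{induced homeo of mS}; you may replace your last paragraph by this one-line observation.
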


As we saw on Remark \ref{Monotonic semilattices are a variety}, the class of monotone semilattices is a variety so we obtain a representation theorem analogue to Theorem \ref{Representation Vietoris}.

\begin{theorem}
Every monotone semilattice can be represented as an $mS$-space associated to a monotone lower-Vietoris-type family.
\end{theorem}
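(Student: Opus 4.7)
The plan is to mimic the strategy used for the proof of Theorem \ref{Representation Vietoris} in the variety of semilattices. The crucial observation is Remark \ref{Monotonic semilattices are a variety}, which asserts that $\mathcal{MS}$ is a variety. Therefore, by standard universal algebra (II.10.9 in \cite{Sanka}), every monotone semilattice $\langle \A, m\rangle$ is the homomorphic image of some freely generated free monotone semilattice; that is, there exist a free monotone semilattice $\langle \mathbf{F}, m_{F}\rangle$ and an onto monotone homomorphism $h\colon \mathbf{F}\to \A$ with $h \circ m_F = m \circ h$.

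From here, I would invoke Corollary \ref{cormon}: since $h$ is an onto monotone homomorphism, the induced meet-relation $R_{h}\subseteq X(\A)\times X(\mathbf{F})$ is a one-to-one monotone meet-relation between the dual $mS$-spaces $\langle X(\A),\mathcal{K}_{\A}, R_{m}\rangle$ and $\langle X(\mathbf{F}),\mathcal{K}_{\mathbf{F}},R_{m_{F}}\rangle$. Then Theorem \ref{theo-FM space} applied to this relation yields that the family
\begin{equation*}
\mathcal{F}_{R_{h}}=\{R_{h}(P)\colon P\in X(\A)\}\subseteq C_{\mathcal{K}_{\mathbf{F}}}(X(\mathbf{F}))
\end{equation*}
is a monotone lower-Vietoris-type family of the dual $S$-space of $\mathbf{F}$.

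To close the argument, I would appeal to Theorem \ref{induced homeo of mS}: with the multirelation $T\subseteq \mathcal{F}_{R_{h}}\times \mathcal{Z}(\mathcal{F}_{R_{h}})$ defined there, the triple $\langle \mathcal{F}_{R_{h}},\mathcal{M},T\rangle$ is an $mS$-space, and the map $\lambda\colon X(\A)\to \mathcal{F}_{R_{h}}$ given by $\lambda(P)=R_{h}(P)$ is a homeomorphism of $mS$-spaces between $\langle X(\A),\mathcal{K}_{\A},R_{m}\rangle$ and $\langle \mathcal{F}_{R_{h}},\mathcal{M},T\rangle$. Combined with the duality of Theorem \ref{duality theorem}, this exhibits $\langle \A, m\rangle$ as the dual monotone semilattice of the $mS$-space associated to the monotone lower-Vietoris-type family $\mathcal{F}_{R_{h}}$, which is exactly the desired representation.

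Since essentially all the work has already been done in Corollary \ref{cormon}, Theorem \ref{theo-FM space} and Theorem \ref{induced homeo of mS}, the only non-routine ingredient is the existence of a free monotone semilattice projecting onto a given $\langle\A,m\rangle$; I expect this to be the only potential point of friction, but it is immediate from Remark \ref{Monotonic semilattices are a variety} together with Birkhoff's HSP-style construction of free algebras in a variety. Hence the proof reduces to a short assembly of previously established lemmas.
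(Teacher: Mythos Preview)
Your proposal is correct and follows exactly the approach the paper intends: as in the semilattice case (Theorem \ref{Representation Vietoris}), one uses Remark \ref{Monotonic semilattices are a variety} and the existence of free algebras in a variety to exhibit $\langle\A,m\rangle$ as an onto image of a free monotone semilattice, and then Corollary \ref{cormon}, Theorem \ref{theo-FM space} and Theorem \ref{induced homeo of mS} produce the desired $mS$-space from the associated monotone lower-Vietoris-type family. The paper in fact omits the proof entirely, just noting that it is the analogue of Theorem \ref{Representation Vietoris}.
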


We conclude this section by providing a characterization of the congruences on monotone semilattices. The proofs of the following two results are similar to the proofs of Theorem \ref{congruences semilattices} and \ref{congruences semilattices 2}, respectively, so we omit them.

\begin{theorem}
Let $\langle \mathbf{A}, m \rangle \in \mathcal{MS}$.
\begin{enumerate}
\item If $\mathcal{F} \subseteq C_\K(X(\A))$ is a monotone lower-Vietoris-type family of the dual $mS$-space of $\langle \mathbf{A}, m \rangle$, then $\theta_{\mathcal{F}}$ given by (\ref{equa congruence F}) is a congruence on $\langle \mathbf{A}, m \rangle$ such that $\mathcal{F}=\mathcal{F}_{R_{{q_{\theta_\mathcal{F}}}}}$.
\item If $\theta$ is a congruence on $\langle \mathbf{A}, m \rangle$, then $\theta = \theta_{\mathcal{F}_{R_{q_{\theta}}}}$.
\end{enumerate}
\end{theorem}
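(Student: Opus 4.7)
The plan is to reduce both items to the corresponding statements for plain semilattices (Theorems \ref{congruences semilattices} and \ref{congruences semilattices 2}), so that the only genuine work lies in verifying compatibility of $\theta_\mathcal{F}$ with the monotone operator $m$ in item~1. For item~2 the reduction is essentially immediate: any monotone-semilattice congruence $\theta$ is in particular a semilattice congruence on $\mathbf{A}$, the relation $R_{q_\theta}$ is defined purely from $q_\theta$ as a semilattice homomorphism, and Theorem \ref{congruences semilattices 2} then yields $\theta = \theta_{\mathcal{F}_{R_{q_\theta}}}$ without any role for $m$.

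For item~1, I would first invoke Theorem \ref{congruences semilattices}(1) applied to $\mathcal{F}$ (which is \emph{a fortiori} a lower-Vietoris-type family) to obtain that $\theta_\mathcal{F}$ is a semilattice congruence on $\mathbf{A}$. What remains is to show that $(a,b)\in\theta_\mathcal{F}$ implies $(ma,mb)\in\theta_\mathcal{F}$; writing $H_c := [\beta(c)^c]^-_\mathcal{F}$, this amounts to: $H_a = H_b \Rightarrow H_{ma}=H_{mb}$. The key bridge to establish is the identity
\begin{equation*}
Y \in H_{mc} \iff \beta(c)^c \in R_m[Y], \qquad \text{for all } Y\in\mathcal{F} \text{ and } c\in A.
\end{equation*}
This is a short computation: one checks $I_{\mathbf{A}}(\beta(c)^c) = (c]$, so $(P,\beta(c)^c)\in R_m$ amounts to $m^{-1}[P]\cap (c] = \emptyset$, which using monotonicity of $m$ and the upward-closure of the filter $P$ is equivalent to $mc\notin P$. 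Hence $\beta(c)^c\in R_m[Y]$ iff some $P\in Y$ lies in $\beta(mc)^c$, i.e., $Y\in H_{mc}$.

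Once this identity is in hand, the hypothesis that $\mathcal{F}$ is a \emph{monotone} lower-Vietoris-type family enters: for each $Y\in\mathcal{F}$, the set $R_m[Y]$ is $\mathcal{M}_\mathcal{F}$-increasing. Unfolding the definition, $H_a\subseteq H_b$ together with $\beta(a)^c\in R_m[Y]$ gives $\beta(b)^c\in R_m[Y]$, which via the bridging identity reads $Y\in H_{ma}\Rightarrow Y\in H_{mb}$. Applying this in both directions from $H_a=H_b$ yields $H_{ma}=H_{mb}$, i.e., $(ma,mb)\in\theta_\mathcal{F}$, as desired. The equality $\mathcal{F} = \mathcal{F}_{R_{q_{\theta_\mathcal{F}}}}$ then follows at once from Theorem \ref{congruences semilattices}(2), now that $\theta_\mathcal{F}$ has been verified to be a monotone-semilattice congruence. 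The main obstacle is precisely the translation between the $\mathcal{M}_\mathcal{F}$-increasing condition on $R_m[Y]$ and the compatibility of $\theta_\mathcal{F}$ with $m$; I expect the most care to be needed in verifying the bridging identity $Y\in H_{mc}\iff \beta(c)^c\in R_m[Y]$, which is what couples the monotone-space structure to the definition of $\theta_\mathcal{F}$.
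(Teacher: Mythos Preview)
Your proposal is correct and aligns with the paper's approach: the paper simply states that the proofs are similar to those of Theorems \ref{congruences semilattices} and \ref{congruences semilattices 2} and omits them entirely, so your reduction to the semilattice case is exactly the intended route. Your bridging identity $Y\in H_{mc}\iff \beta(c)^c\in R_m[Y]$ and the subsequent use of the $\mathcal{M}_\mathcal{F}$-increasing condition to extract $H_a=H_b\Rightarrow H_{ma}=H_{mb}$ are precisely the extra details needed for compatibility with $m$; in fact this computation also appears (in the converse direction) in the paper's proof of Theorem \ref{theo-FM space}, confirming that you are unpacking the same mechanism the authors have in mind.
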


\begin{theorem}
Let $\langle \mathbf{A}, m \rangle \in \mathcal{MS}$. Then the congruences on $\langle \mathbf{A}, m \rangle$ are characterized by monotone lower-Vietoris-type families of the dual $mS$-space of $\langle \mathbf{A}, m \rangle$.
\end{theorem}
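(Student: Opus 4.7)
The plan is to establish a bijective correspondence between the congruences on $\langle \mathbf{A}, m\rangle$ and the monotone lower-Vietoris-type families of the dual $mS$-space $\langle X(\mathbf{A}), \mathcal{K}_{\mathbf{A}}, R_{m}\rangle$, by exhibiting the two assignments $\theta \mapsto \mathcal{F}_{R_{q_{\theta}}}$ and $\mathcal{F} \mapsto \theta_{\mathcal{F}}$ and showing they are mutually inverse. The previous theorem already hands us the two required identities $\mathcal{F} = \mathcal{F}_{R_{q_{\theta_{\mathcal{F}}}}}$ and $\theta = \theta_{\mathcal{F}_{R_{q_{\theta}}}}$, so once the assignments are well defined on the correct sides the statement will follow from these identities.

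First, I would check that $\theta \mapsto \mathcal{F}_{R_{q_{\theta}}}$ lands in $\mathcal{V}_{m}(X(\mathbf{A}))$. Since $q_{\theta} \colon \mathbf{A} \to \mathbf{A}/\theta$ is an onto monotone homomorphism of monotone semilattices, Corollary \ref{cormon} gives that $R_{q_{\theta}} \subseteq X(\mathbf{A}/\theta) \times X(\mathbf{A})$ is a one-to-one monotone meet-relation. Then Theorem \ref{theo-FM space} applies (with $T = R_{q_{\theta}}$) and yields that $\mathcal{F}_{R_{q_{\theta}}}$ is a monotone lower-Vietoris-type family of $\langle X(\mathbf{A}), \mathcal{K}_{\mathbf{A}}, R_{m}\rangle$.

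Next, I would check that $\mathcal{F} \mapsto \theta_{\mathcal{F}}$ lands in the set of congruences of $\langle \mathbf{A}, m \rangle$. By the previous theorem applied to $\mathcal{F}$ (its part 1), we know that $\theta_{\mathcal{F}}$ is already a congruence of $\langle \mathbf{A}, m\rangle$ (this is where the full strength of \emph{monotone} lower-Vietoris-type family is used; the $\mathcal{M}_{\mathcal{F}}$-increasing condition on $R_{m}[Y]$ is exactly what makes $\theta_{\mathcal{F}}$ compatible with $m$, not merely with $\wedge$). Thus the two assignments are well defined between the claimed sets.

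Finally, the mutual inverse property follows immediately: given a congruence $\theta$, the previous theorem's part 2 gives $\theta = \theta_{\mathcal{F}_{R_{q_{\theta}}}}$, so composing the two maps in one direction is the identity; given a monotone lower-Vietoris-type family $\mathcal{F}$, its part 1 gives $\mathcal{F} = \mathcal{F}_{R_{q_{\theta_{\mathcal{F}}}}}$, so the other composition is also the identity. The only step that required any nontrivial work is the one already discharged in the previous theorem, namely verifying that monotonicity of $\mathcal{F}$ transfers to $m$-compatibility of $\theta_{\mathcal{F}}$; beyond that, the present statement is essentially a packaging of Theorem \ref{theo-FM space} together with parts 1 and 2 of the preceding theorem, so no further obstacle is expected.
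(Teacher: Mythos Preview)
Your proposal is correct and mirrors the paper's approach: the paper gives no explicit proof of this theorem, treating it as an immediate consequence of the preceding two-part theorem (the monotone analogues of Theorems \ref{congruences semilattices} and \ref{congruences semilattices 2}), which is precisely the packaging you describe. Your well-definedness check for $\theta \mapsto \mathcal{F}_{R_{q_{\theta}}}$ via Corollary \ref{cormon} and Theorem \ref{theo-FM space} is also in line with the paper, which notes just before Theorem \ref{theo-FM space} that $\mathcal{F}_{R_h}$ is a monotone lower-Vietoris-type family whenever $h$ is an onto monotone homomorphism.
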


Let $\langle X, \K, R \rangle$ be an $mS$-space. 
If we endow $\mathcal{V}_{m}(X)$ with the induced order on $\mathcal{V}(X)$ then we obtain an analogue of Theorem \ref{vietoris are a complete lattice} for $mS$-spaces.

\begin{theorem}
For every $mS$-space $\langle X, \K, R \rangle$, the poset $\langle\mathcal{V}_{m}(X), \leq\rangle$ is a complete lattice which is dually isomorphic to the set of all congruences of $\langle \mathbf{S}(X), m_{R}\rangle$. 
\end{theorem}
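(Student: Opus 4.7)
The plan is to mimic the proof of Theorem \ref{vietoris are a complete lattice}, but now working with the monotone versions of the objects involved. Let $\mathrm{Con}(\mathbf{S}(X), m_{R})$ denote the set of congruences of the monotone semilattice $\langle \mathbf{S}(X), m_{R}\rangle$. The two immediately preceding theorems (the monotone analogues of Theorems \ref{congruences semilattices} and \ref{congruences semilattices 2}, applied to the monotone semilattice $\langle \mathbf{S}(X), m_R\rangle$ via the isomorphism provided by Theorem 3.24 of \cite{Celani2020} together with Lemma \ref{HX}) guarantee that the assignments
\begin{equation*}
\theta \longmapsto \mathcal{F}_{R_{q_{\theta}}}, \qquad \mathcal{F} \longmapsto \theta_{\mathcal{F}},
\end{equation*}
establish a bijection between $\mathrm{Con}(\mathbf{S}(X), m_{R})$ and $\mathcal{V}_{m}(X)$.

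The key step is then to verify that this bijection reverses the respective orders. First I would check that $\mathcal{F}_{1}\leq \mathcal{F}_{2}$ if and only if $\theta_{\mathcal{F}_{2}}\subseteq \theta_{\mathcal{F}_{1}}$; this follows directly from the definition of $\leq$ on $\mathcal{V}_m(X)$ and the definition of $\theta_{\mathcal{F}}$ given in (\ref{equa congruence F}), exactly as in the semilattice case, because the monotonicity condition plays no extra role in the verification of the order-reversing property itself (it has already been used to ensure that the objects in question actually belong to $\mathcal{V}_{m}(X)$ and $\mathrm{Con}(\mathbf{S}(X), m_R)$ respectively).

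Once this dual correspondence is established, the lattice structure is transferred from $\mathrm{Con}(\mathbf{S}(X), m_{R})$, which is complete by general universal algebra, to $\mathcal{V}_{m}(X)$. Explicitly, for any family $\{\mathcal{F}_{i}\colon i\in I\}\subseteq \mathcal{V}_{m}(X)$ one sets
\begin{equation*}
\bigvee \{\mathcal{F}_{i}\colon i\in I\} = \mathcal{F}_{R_{q_{\bigcap\{\theta_{\mathcal{F}_{i}}\colon i\in I\}}}}, \qquad \bigwedge \{\mathcal{F}_{i}\colon i\in I\} = \mathcal{F}_{R_{q_{\bigvee\{\theta_{\mathcal{F}_{i}}\colon i\in I\}}}},
\end{equation*}
where on the right-hand sides the meet and join of congruences are computed in $\mathrm{Con}(\mathbf{S}(X), m_{R})$.

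The main obstacle I anticipate is purely bookkeeping: one has to make sure that the order $\leq$ defined on $\mathcal{V}(X)$ induces precisely the corresponding order on its subset $\mathcal{V}_{m}(X)$, and that the resulting lattice operations respect the extra $\mathcal{M}_{\mathcal{F}}$-increasing condition required for a family to belong to $\mathcal{V}_{m}(X)$. This is automatic from the fact that $\mathcal{F}_{R_{q_{\theta}}}$ is always a monotone lower-Vietoris-type family whenever $\theta$ is a congruence of $\langle \mathbf{S}(X), m_R\rangle$ (as observed right after Theorem \ref{induced homeo of mS}), so the transferred suprema and infima genuinely land in $\mathcal{V}_{m}(X)$.
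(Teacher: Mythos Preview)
Your proposal is correct and follows exactly the route the paper intends: the paper gives no explicit proof of this theorem, merely introducing it as the $mS$-space analogue of Theorem~\ref{vietoris are a complete lattice}, so your plan of replaying that argument with the monotone versions of Theorems~\ref{congruences semilattices} and~\ref{congruences semilattices 2} and invoking Lemma~\ref{HX} together with Theorem~3.24 of \cite{Celani2020} is precisely what is expected. The bookkeeping issues you flag are handled just as you say, via the observation after Theorem~\ref{induced homeo of mS}.
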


\subsection{Congruence lattices of semilattices and lattices of algebraic sets}

In \cite{Fajtlowicz}, Fajtlowicz and Schmidt developed a duality between congruences of semilattices and the lattices of algebraic sets. Unlike us, they worked with ideals instead of filters. By following the same line and applying our characterization of congruences of semilattices, we conclude this paper by showing that there exists a duality between congruences of semilattices and families of filters.

A subset $X$ of a complete lattice $\mathbf{L}$ is an \emph{algebraic subset} if it contains $1$, it is closed under arbitrary meets, and it is closed under non-empty directed joins. Following the notation of \cite{Hyndman}, we denote ${\rm{S}}_p({\mathbf{L}})$ the lattice of all algebraic subsets of $\mathbf{L}$.

\begin{definition}
Let $\A$ be a semilattice and $\theta$ be a congruence on $A$. A filter $F$ of $\A$ is \emph{$\theta$-closed} if $(x,y) \in\theta$ and $y \in F$ implies $x \in F$.
\end{definition}

Let $\A$ be a semilattice and let $\{F_{i}\colon i\in I\}$ be a family of filters of $\A$. We recall that $\Fi(\A)$ is a complete lattice whose operations are given by 
\begin{equation*}
\begin{array}{ccc}
\displaystyle \bigvee \{ F_{i} \colon i \in I \} = F \left( \bigcup \{ F_{i} \colon i \in I \} \right) & \text{ and } & \displaystyle \bigwedge \{ F_{i} \colon i \in I \} = \bigcap \{ F_{i} \colon i \in I \}.
\end{array}
\end{equation*}

The following Theorem can be found in \cite{KIRA ADARICHEVA and J. B. NATION}, using ideals instead of filters. We include the proof for completeness.
\begin{theorem}
Let $\A$ be a semilattice. Then the lattice of congruences of $\A$ is dually isomorphic to ${\rm{S}}_p(\Fi(\A))$.
\end{theorem}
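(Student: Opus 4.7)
The plan is to exhibit two mutually inverse, order-reversing maps $\Theta \colon \mathrm{Con}(\mathbf{A}) \to {\rm{S}}_p(\Fi(\mathbf{A}))$ and $\Psi \colon {\rm{S}}_p(\Fi(\mathbf{A})) \to \mathrm{Con}(\mathbf{A})$, given respectively by $\Theta(\theta) = \{F \in \Fi(\mathbf{A}) \colon F \text{ is } \theta\text{-closed}\}$ and $\Psi(\mathcal{S}) = \{(a,b) \in A \times A \colon \forall F \in \mathcal{S}, \ a \in F \Leftrightarrow b \in F\}$. First I would check that these assignments are well-defined: $\Theta(\theta)$ is an algebraic subset because $A \in \Theta(\theta)$ trivially, intersections of $\theta$-closed filters remain $\theta$-closed, and directed unions of $\theta$-closed filters are again filters inheriting $\theta$-closedness element-wise; $\Psi(\mathcal{S})$ is an equivalence relation compatible with $\wedge$ because for every filter $F$ we have $x \wedge y \in F$ if and only if both $x \in F$ and $y \in F$.

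For the identity $\Psi(\Theta(\theta)) = \theta$, the inclusion $\theta \subseteq \Psi(\Theta(\theta))$ is immediate from the definition of $\theta$-closedness. For the reverse inclusion I would argue by contraposition via the quotient map $q \colon \mathbf{A} \to \mathbf{A}/\theta$: if $(a,b) \notin \theta$, then $q(a) \neq q(b)$ in $\mathbf{A}/\theta$, so without loss of generality $q(a) \not\leq q(b)$, and then $F = q^{-1}[[q(a))]$ is a $\theta$-closed filter containing $a$ but not $b$, which witnesses $(a,b) \notin \Psi(\Theta(\theta))$.

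The principal obstacle is the identity $\Theta(\Psi(\mathcal{S})) = \mathcal{S}$. The inclusion $\mathcal{S} \subseteq \Theta(\Psi(\mathcal{S}))$ is immediate from the definition of $\Psi$. For the reverse, take $G \in \Theta(\Psi(\mathcal{S}))$ and for each $a \in G$ set $F_a = \bigcap \{F \in \mathcal{S} \colon a \in F\}$, which lies in $\mathcal{S}$ by closure under arbitrary meets (and is well-defined since $A \in \mathcal{S}$). The crucial claim is $F_a \subseteq G$: for $b \in F_a$ I would show $(a, a \wedge b) \in \Psi(\mathcal{S})$ by observing that for every $F \in \mathcal{S}$ with $a \in F$ we have $b \in F_a \subseteq F$ and hence $a \wedge b \in F$, while the converse is automatic from $a \wedge b \leq a$. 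Then the $\Psi(\mathcal{S})$-closedness of $G$ together with $a \in G$ yields $a \wedge b \in G$, whence $b \in G$ since $G$ is an upset. Because $a \in F_a$ for each $a \in G$, we obtain $G = \bigcup_{a \in G} F_a$, and the family $\{F_a \colon a \in G\}$ is directed since $a, a' \in G$ imply $a \wedge a' \in G$ with $F_{a \wedge a'} \supseteq F_a \cup F_{a'}$. Thus $G$ is a nonempty directed union of members of $\mathcal{S}$ and therefore lies in $\mathcal{S}$ by closure under directed joins. The order-reversal of $\Theta$ is immediate, since $\theta_1 \subseteq \theta_2$ forces every $\theta_2$-closed filter to be $\theta_1$-closed, whence $\Theta(\theta_2) \subseteq \Theta(\theta_1)$, concluding the dual isomorphism.
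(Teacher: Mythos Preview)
Your proof is correct and follows essentially the same route as the paper's: the maps $\Theta,\Psi$ coincide with the paper's $\sigma,\rho$, the separating filter $q^{-1}[[q(a))]$ equals the paper's $\{z\in A\colon (a\wedge z,a)\in\theta\}$, and both arguments for $\Theta(\Psi(\mathcal S))=\mathcal S$ hinge on the same construction $F_a=\bigcap\{F\in\mathcal S\colon a\in F\}$ and the directedness of $\{F_a\colon a\in G\}$. The only cosmetic difference is that the paper argues the last identity by contraposition (if $F\notin U$ then $\bigcup_{x\in F}F_x$ strictly contains $F$, exhibiting a failure of $\rho(U)$-closedness), whereas you argue it directly; your version is in fact a bit more explicit about why $F_a\subseteq G$.
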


\begin{proof}
We write ${\rm{Con}}(\A)$ the lattice of all congruences of $\A$. We define the map $\sigma \colon {\rm{Con}}(\A) \rightarrow {\rm{S}}_p(\Fi(\A))$ by setting 
$\sigma(\theta)$ as the set of all $\theta$-closed filters of $\A$. Now, let us consider the map $\rho \colon {\rm{S}}_p(\Fi(\A)) \rightarrow {\rm{Con}}(\A)$ defined by 
\begin{equation*}
\rho(U) = \{(x, y) \in A^2 \colon \forall F \in U (x \in F \Longleftrightarrow y \in F)\}.
\end{equation*}
It is straightforward to check that $\sigma$ and $\rho$ are order-reversing, $\sigma(\theta) \in {\rm{S}}_p(\Fi(\A))$ and $\rho(U) \in {\rm{Con}}(\A)$. In order to show that $\theta = \rho \sigma (\theta)$, note that if $y < x$ and $(x, y) \notin \theta$, then $\{z \in A \colon (x \wedge z, x) \in \theta \}$ is a $\theta$-closed filter containing $x$ and but not $y$. Hence $(x,y) \notin \rho \sigma(\theta)$. To see that $U = \sigma \rho (U)$, consider a filter $F \notin U$. For each $x \in A$, let $F_x = \bigcap \{F \in U \colon x \in F\}$. So, $F_x \in U$. Then $\{F_x \colon x \in F\}$ is directed, whence $\bigcup \{F_x \colon x \in F \} \in U$. Thus, the latter union properly contains $F$. Then there exist $y < x$ with $x \in F$, $y \in F_x$ and $y \notin F$, i.e., $F$ is not $\rho (U)$-closed. Therefore, $F \notin U$ implies $F \notin \sigma\rho(U)$. This concludes the proof.
\end{proof}

Let $\A$ be a semilattice and $\theta$ be a congruence on $\A$. Then for every $Q \in X(\A/\theta)$ we have $R_{q_\theta}(Q) \in C_\K(X(\A))$. So, there is an associated filter $\psi(R_{q_\theta}(Q))=q_\theta^{-1}[Q]$ and we get a family of filters
\begin{equation*}
\Psi (\mathcal{F}_{R_{q_\theta}}) = \{ q_\theta^{-1}[Q] \colon Q\in X(\A/\theta)\}.
\end{equation*}

\begin{lemma}
Let $\A$ be a semilattice and $\theta$ be a congruence on $\A$. Then:
\begin{enumerate}
\item $\Psi (\mathcal{F}_{R_{q_\theta}})\subseteq \sigma (\theta)$.
\item If $F\in \sigma(\theta) - \{ A \}$, then 
\begin{equation*}
F = \displaystyle \bigcap \{ q_{\theta}^{-1}[Q] \in \Psi(\mathcal{F}_{R_{q_\theta}}) \colon F \subseteq q_{\theta}^{-1}[Q] \}.
\end{equation*}
\end{enumerate}
\end{lemma}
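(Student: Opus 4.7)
The plan is to handle part (1) by direct verification and part (2) through a separation argument in the quotient $\mathbf{A}/\theta$, with the hypothesis of $\theta$-closedness of $F$ doing the essential work.

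For (1), fix $Q \in X(\mathbf{A}/\theta)$ and set $F = q_\theta^{-1}[Q]$. Since $q_\theta$ is a semilattice homomorphism and $Q$ is a filter of $\mathbf{A}/\theta$, the set $F$ is a filter of $\mathbf{A}$. For $\theta$-closedness: if $y \in F$ and $(x,y)\in \theta$, then $q_\theta(x) = q_\theta(y) \in Q$, whence $x \in F$. So $F \in \sigma(\theta)$.

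For (2), the inclusion $F \subseteq \bigcap\{q_\theta^{-1}[Q] \in \Psi(\mathcal{F}_{R_{q_\theta}}) \colon F \subseteq q_\theta^{-1}[Q]\}$ is immediate from the defining condition. For the reverse, I would fix $x \in A$ with $x \notin F$ and produce $Q \in X(\mathbf{A}/\theta)$ with $F \subseteq q_\theta^{-1}[Q]$ and $x \notin q_\theta^{-1}[Q]$. First I would check that $q_\theta[F]$ is a filter of $\mathbf{A}/\theta$: closure under meets and containment of $1/\theta$ are routine; for upward closure, if $y \in F$ and $q_\theta(y) \leq q_\theta(c)$, then $(y, y \wedge c) \in \theta$, so by $\theta$-closedness of $F$ we get $y \wedge c \in F$, and hence $c \in F$ since $F$ is an upset. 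Next, the same mechanism shows that $q_\theta[F] \cap (q_\theta(x)] = \emptyset$ in $\mathbf{A}/\theta$: if there were $y \in F$ with $q_\theta(y) \leq q_\theta(x)$, then $(y, y \wedge x) \in \theta$ gives $y \wedge x \in F$, and hence $x \in F$, contradicting the choice of $x$. Finally, Theorem \ref{separacion} applied to the filter $q_\theta[F]$ and the order-ideal $(q_\theta(x)]$ of $\mathbf{A}/\theta$ yields an irreducible filter $Q \in X(\mathbf{A}/\theta)$ with $q_\theta[F] \subseteq Q$ and $q_\theta(x) \notin Q$, from which $F \subseteq q_\theta^{-1}[Q]$ and $x \notin q_\theta^{-1}[Q]$ follow at once.

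The main subtlety lies in the twin roles played by the $\theta$-closedness of $F$: it is simultaneously what forces $q_\theta[F]$ to be upward closed in $\mathbf{A}/\theta$ and what guarantees its disjointness from the principal downset $(q_\theta(x)]$. Without this hypothesis neither step would go through, which is precisely why the statement is restricted to $\theta$-closed filters (the hypothesis $F \neq A$ only serves to ensure that there is some $x$ outside $F$ to which the separation can be applied).
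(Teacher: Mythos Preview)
Your proof is correct and follows essentially the same approach as the paper. For part (2), the paper is terser: it simply asserts that $q_\theta[F]\in\Fi(\mathbf{A}/\theta)$ and then invokes the consequence of Theorem~\ref{separacion} that every proper filter equals the intersection of the irreducible filters containing it, whereas you spell out the verification that $q_\theta[F]$ is a filter and carry out the separation element by element with the principal ideal $(q_\theta(x)]$; both arguments hinge on the same two ingredients, namely $\theta$-closedness (to make $q_\theta[F]$ a genuine filter and to keep $q_\theta(x)$ out of it) and the separation theorem.
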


\begin{proof}
1. It is easy to see that $q_\theta^{-1}[Q]$ is a $\theta$-closed filter for all $Q \in X(\A/\theta)$.

2. Let $F\in \sigma(\theta) - \{ A \}$. Then $q_\theta [F] \in \Fi(\A/\theta)$ and as a consequence of Theorem \ref{separacion} we get $q_\theta [F] =\bigcap \{ Q \in X(\A/\theta) \colon q_{\theta} [F] \subseteq Q\}$. It follows that $F = \bigcap \{ q_{\theta}^{-1}[Q] \in \Psi(\mathcal{F}_{R_{q_\theta}}) \colon F\subseteq q_\theta^{-1}[Q] \}$.
\end{proof}


%
%

\end{document}